\DeclareRobustCommand\full  {\tikz[baseline=-0.6ex]\draw[thick] (0,0)--(0.5,0);}
\DeclareRobustCommand\dotted{\tikz[baseline=-0.6ex]\draw[thick,dotted] (0,0)--(0.54,0);}
\DeclareRobustCommand\dashed{\tikz[baseline=-0.6ex]\draw[thick,dashed] (0,0)--(0.54,0);}
\DeclareRobustCommand\chainn{\tikz[baseline=-0.6ex]\draw[thick,dash dot] (0,0)--(0.5,0);}
\DeclareRobustCommand\ch{\tikz[baseline=-0.6ex]\draw[thick,dashed,line cap=square] (0,0)--(0.5,0);}
\newlength\replength
\newenvironment{proof}{\noindent\textbf{Proof.} }{\hfill$\square$\par}
\def\@begintheorem#1#2{\trivlist
   \item[\hskip \labelsep{\bf #1\ #2.}]} 
\def\@opargbegintheorem#1#2#3{\trivlist
   \item[\hskip \labelsep{\bf #1\ #2\ (#3).}]} 
\newtheorem{remark}{Remark}
\def\@begintheorem#1#2{\trivlist
   \item[\hskip \labelsep{\bf #1\ #2.}]} 
\def\@opargbegintheorem#1#2#3{\trivlist
   \item[\hskip \labelsep{\bf #1\ #2\ (#3).}]} 
\newtheorem{lemma}{Lemma}
\def\@begintheorem#1#2{\trivlist
   \item[\hskip \labelsep{\bf #1\ #2.}]} 
\def\@opargbegintheorem#1#2#3{\trivlist
   \item[\hskip \labelsep{\bf #1\ #2\ (#3).}]} 
\newtheorem{definition}{Definition}
\def\@begintheorem#1#2{\trivlist
   \item[\hskip \labelsep{\bf #1\ #2.}]} 
\def\@opargbegintheorem#1#2#3{\trivlist
   \item[\hskip \labelsep{\bf #1\ #2\ (#3).}]} 
\newtheorem{theorem}{Theorem}
\def\@begintheorem#1#2{\trivlist
   \item[\hskip \labelsep{\bf #1\ #2.}]} 
\def\@opargbegintheorem#1#2#3{\trivlist
   \item[\hskip \labelsep{\bf #1\ #2\ (#3).}]} 
\newtheorem{corollary}{Corollary}
\definecolor{darkred}{rgb}{0.6,0,0}
\definecolor{darkgreen}{rgb}{0,0.5,0}
\definecolor{darkmagenta}{rgb}{0.5,0,0.5}
\newcommand{\R}{\mathbb R}
\newcommand{\dott}{\, \cdot\,}
\let\csname equation*\endcsname\relax
\let\csname endequation*\endcsname\relax
\numberwithin{equation}{section}  
\newcommand{\Z}{\mathbb{Z}}
\DeclareMathOperator{\lip}{Lip}
\DeclareMathOperator{\BV}{BV}
\newcommand{\sgn}{\mathop\mathrm{sgn}}
\newcommand{\norma}[1]{{\left\|#1\right\|}}
\renewcommand{\d}[1]{\mathinner{\mathrm{d}{#1}}}
\DeclareMathOperator{\TV}{TV}
\newcommand{\D}{\Delta}
\newcommand{\abs}[1]{{\left|#1\right|}}
\begin{document}

\title[Nonlocal balance laws]{Error Estimates for Systems of Nonlocal Balance Laws Modeling Dense Multilane Vehicular Traffic}
\author{Aekta Aggarwal$^1$, Helge Holden$^2$, 
Ganesh Vaidya$^3$}
\address{$^1$ Operations Management and Quantitative Techniques, Indian Institute of Management, Prabandh Shikhar, Rau--Pithampur Road, Indore, Madhya Pradesh, 453556, India}
\address{$^2$ Department of Mathematical Sciences,
    NTNU  Norwegian University of Science and Technology,
     NO--7491 Trondheim, Norway}
     \address{$^3$ Department of Mathematics,    Eberly College of Science, Penn State University, University Park, PA 16802, USA.}
\ead{aektaaggarwal@iimidr.ac.in,helge.holden@ntnu.no,gmv5228@psu.edu }
\begin{abstract} 
We discuss a class of coupled systems of nonlocal nonlinear balance laws modeling multilane traffic, with the nonlocality present in both convective and source terms.  The uniqueness and existence of the entropy solution are proven via doubling of the variables arguments and convergent finite volume approximations, respectively. The primary goal is to establish that the finite volume numerical approximations of the system converge to the unique entropy solution at a rate of 
$\sqrt{\Delta t}$, even when using relatively less regular one-sided kernels, compared to the globally smooth kernels analyzed in [Num. Math., 156(1):237–271, 2024] and {\color{black}[IMA J. Numer. Anal., 44(6):3354-3392, 2024].}
The applicability of the proven theory to a general class of systems of nonlocal balance laws coupled strongly through the convective part and weakly through the source part, is indicated. 
{\color{black}As the support of the kernel tends to zero, the convergence of the entropy solutions of the proposed model to its local counterparts [SIAM J. Math. Anal., 51: 3694--3713, 2019] is also discussed. Numerical simulations illustrating the behavior of the entropy solutions of the coupled nonlocal systems are also shown.}
\end{abstract}
\section{Introduction}\label{sec:intro}
The literature on ``local'' conservation laws is very rich and describes various multidisciplinary phenomena such as sedimentation, blood flow, traffic dynamics, etc. 
However, recent advancements have extended these equations to ``nonlocal" conservation laws of the type 
\begin{align}\label{NL_k}
u_t+\partial_x(f(x,u)\nu(u*\omega_{\eta}))=0,
\end{align}
capturing more complex interactions and physically relevant behavior across the  spatial domain. Here, the solution $u$ (density in the case of traffic, for example)  at any point depends on its average in its neighborhood through the term $u*\omega_{\eta}$, defined as the usual convolution of $u$ and $\omega_{\eta}$. Some recent applications with smooth $f$ include crowds, traffic, opinion formation, sedimentation, multi-commodity networks, supply chain, granular material dynamics, and conveyor belt dynamics. 
The theoretical and numerical analysis of this class has been well studied, a non-exhaustive list being \cite{CG2019, GHS+2014, KLS2018, CMR2015,BBKT2011,ANT2007,AS2012,CHM2011,BHL2023, ACT2015, ACG2015,AG2016,BG2016,FGKP2022,CGL2012,CL2011,AHV2023}, and various references therein. The case of $f$ with spatial discontinuities is useful for modeling situations with rough roads, for example, and has been handled in \cite{AV2023,FCV2023,KP2021,AHV2023}.
{\color{black}The lane changing phenomenon of multilane vehicular traffic was modeled using macroscopic models} using a weakly coupled system of ``local" hyperbolic conservation laws in {\color{black}\cite{HR2019,chiarello2019stability}, by including a source term governed by the flux difference between adjacent lanes}. In this article, we consider a \textit{nonlocal} counterpart, which is a coupled system of equations $N$ to model traffic in $N$ lanes, {\color{black}coupled through both local and nonlocal terms in the source and only nonlocal terms in the convective part}. More precisely, we consider a system of $N$ partial differential equations (PDEs), where the $k^{\rm th}$ equation  is given by
\begin{align}\label{PDE_NL_k}
\partial_t(u^k)+\partial_x(f^k(u^k)\nu^k({x, } u^k*\omega_{\eta}))=R^k ({x, }\boldsymbol{u}, \boldsymbol{u} \circledast \omega_{\eta}),\, \, k\in \mathcal{N}:=\{1, \ldots, N\},
\end{align}
where, {\color{black}for each $k\in \mathcal{N}$}, the following assumptions hold:
\begin{enumerate}[label=(\textbf{A\arabic*})]
	\item \label{A1} {\color{black}$f^k(u)=ug^k(u)$}, $g^k\in \lip(\R), g^k(1)=0$, and $g^k$ is non-increasing;
 \item \label{A2}  $\nu^k \in L^{\infty}(\R\times[0,1])$, $\nu^k(\dott,u) \in C^2(\R)$ for $u\in \R$ and $\nu^k(x,\dott) \in C^2(\R) \cap W^{2,\infty}(\R)$ for $x\in \R$;
 \item \label{A3}  {\color{black}
  There exist 
 a constant $\mathcal{C}_0$,
  and a $\BV$ function $a_0:\R\rightarrow\R$ such that 
\begin{align}
   \label{a0_BV}  \abs{\nu^k(x,u)-\nu^k(y,u) } & \leq \mathcal{C}_0 \abs {a_0(x)-a_0(y)},\,\, \text{for all }u\in\R;
\end{align}
   \item \label{A4}For every {\color{black}$\eta\in(0,\infty]$}, let  $\omega_{\eta} \in (C^2 \cap   \BV  \cap \, W^{2,\infty}) ([0,\eta])$ be non-negative and non-increasing, with
}
 \end{enumerate}
\begin{align}\label{con}
    (u^k *\; \omega_{\eta}) (t,x)=
  \displaystyle\displaystyle \int_{x}^{x+\eta}\omega_{\eta} (y-x) \; u^k(t,y) \, \d y,  \quad {\color{black}(t,x)\in Q_T:=(0,T)\times \R},\end{align} for a final time $T>0.$ {\color{black} Assuming that $\omega_\eta(\eta)=0$, the monotonicity of $\omega_{\eta}$ implies that $\norma{\omega_{\eta}}_{L^{\infty}(\R)}=|\omega_{\eta}|_{\BV(\R)}$. }
  Further, the symbol $\circledast$ denotes the component-wise convolution defined by $\boldsymbol{u}\circledast \omega_{\eta}=(u^k*\omega_{\eta})_{k\in\mathcal{N}}.$ {\color{black}While the assumption \ref{A1} is crucial for establishing the invariant region principle of the solution,  
  \ref{A2}--\ref{A3} are essential for establishing various stability bounds on solutions and integrals in the manuscript. The assumption \ref{A4} ensures the choice of kernel being physically relevant with respect to drivers choosing their speed based on the downstream
car density only (i.e. they look forward, not backward)}. 
Also, the source term $R^k$, a locally Lipschitz function in each of the variables,
denotes the net flow into  lane $k$ from its neighboring lanes.
  For each $k\in\mathcal{N}$, the source term $R^k$ is defined by
\begin{align}\label{eq:Rk}
\begin{split}
R^k({x,}\boldsymbol{u},\boldsymbol{u} \circledast \omega_{\eta})&:=S^{k-1}({x,}u^{k-1},u^k,u^{k-1}*\omega_{\eta},u^k*\omega_{\eta})\\&\quad-S^k({x,}u^k,u^{k+1},u^k*\omega_{\eta},u^{k+1}*\omega_{\eta}),
\end{split}
\end{align}
with $S^0({x,}a,b,A,B)=S^{N}({x,}a,b,A,B)=0,$ and  for $k\in{\color{black}\mathcal{N}\setminus\{N\}}$,
\begin{align}\label{eq:Sk}
 \begin{split}
S^k({x,}a,b,A,B)&:= \big(g^{k+1}(b)\nu^{k+1}({x, }B)-g^k(a)\nu^k({x, }A)\big)^+a\\ & \quad -\big(g^{k+1}(b)\nu^{k+1}({x, }B)-g^k(a)\nu^k({x, }A)\big)^-b,
\end{split}
\end{align}
where $a^+=\max(a,0)$ and $ a^{-}=-\min(a,0)$. 
 We denote $\boldsymbol{S}:=(S^1,\ldots,S^{N}), \boldsymbol{f}:=(f^1,\ldots,f^{N})$, and $\boldsymbol{\nu}:=(\nu^1,\ldots,\nu^{N})$.
It can be seen that the function $S^k$ is locally Lipschitz in each of its state variables and $\BV$ in space variable for each $k\in\mathcal{N}$, i.e., for any compact set  $\Omega \subset \R^4$,we have: 
\begin{align}\label{est:Lip_S}\begin{split}
    &\abs{S^k({x,}a,b,A,B)-S^k({x,}\tilde{a},\tilde{b},\tilde{A},\tilde{B})} \\&\qquad\leq \abs{S^k}_{L^{\infty}(\R;\lip(\Omega))} (|a-\tilde{a}|+|b-\tilde{b}|+|A-\tilde{A}|+|B-\tilde{B}|).
 \end{split} 
 \end{align}
 In the context {\color{black}of} multilane traffic flow, $S^k$ denotes the vehicles that \textit{leave} lane {\color{black}$k$} to enter lane {\color{black}$k+1$}.
  Furthermore, $S^0=S^{N}=0$ indicates that vehicles are not allowed to cross the boundary lanes. In addition, the velocity $\nu^k$ in the convolution term and the source term are spatially dependent to incorporate changing road conditions. Note that in contrast to the local counterpart of this model considered in \cite{HR2019}, here each density $u^k$ convects and transfers according to its average density defined by \eqref{con}. {\color{black}In} contrast to \cite{HR2019}, the decision of lane change in $k$ defined by $R^k$ is affected not only by neighboring densities $u^{k\pm 1}, u^k$ but also by their respective averages. {\color{black} Similar source terms (but independent of space variables) have also been considered in \cite{FGR2021,BFK2022}. The article \cite{CK24} also deals with spatially dependent source terms and multilane models, but  with $g^k(u)=1$ and $\omega_{\eta}$ as the decreasing exponential kernel.  Further, \cite{CNAP2022,CGL2024} deal with {\color{black}$g^k(u)=1=N$ and more general $\BV$ kernels. While the former deals with one dimension and uses fixed point theory for well-posedness, the latter establishes local existence results and corresponding finite blow-up in multiple dimensions.}
  
One of the objectives of the paper is the well-posedness of \eqref{PDE_NL_k} on $Q_T$ with the initial data
\begin{align}
\label{eq:u11A}
  u^{k}(0,x)&=u_0^{k}(x), \quad x \in \R.
\end{align} {\color{black} with $u_0\in ((L^1 \cap \BV) (\R;[0,1]))^{N}$.
In this paper we will for simplicity assume that the maximum density equals unity (cf.~the assumption that $ g^k(1)=0$), and thus we assume that the initial data take values in the interval $[0,1]$, and we will show that the solution remains in $[0,1]$, see Lemma \ref{lem:stability}.}

In Section \ref{uni}, we establish the uniqueness of the \textit{entropy} solution (cf.~\eqref{kruz2}), and the approximate solutions obtained from the finite volume schemes proposed in Section~\ref{exis}, are shown to converge to this entropy solution. The main objective of this article is the error analysis of our finite volume approximations, proposed in Section \ref{exis}.
This {\color{black}was} achieved by following the techniques of~\cite{KUZ1976}, where the so-called doubling of variables argument is used to express the error between the entropy solution $u$ (that is, $N = 1$) and its numerical approximation $u_{\Delta}$ in terms of their relative entropy. 

It should be noted that, {\color{black}the analysis presented in this paper differs from the existing literature \cite{AHV2023,AHV2023_1,ACG2015,ACT2015,BBKT2011}. These articles studied \eqref{PDE_NL_k} with $R^k\equiv 0$ with smooth global $\omega_{\eta}$. In particular, \cite{AHV2023_1,ACG2015} dealt with $N\ge 1$, where \cite{AHV2023_1} handled the well-posedness and error estimates for spatially discontinuous $f^k,$ while \cite{ACG2015} established the existence of entropy solutions in multi-dimensions with smooth $f^k$. Further,  \cite{ACT2015,BBKT2011,AHV2023} dealt with $N=1$ with \cite{AHV2023} dealing with well-posedness and error estimates with spatially discontinuous $f^k$ and multiple dimensions, and \cite{BBKT2011,ACT2015} studying the well-posedness with smooth $f^k$. The present article handles} less regular, though more {\color{black}realistic}, one sided  kernels and in the presence of the source term. Additionally, the results presented here are more general as compared to those in previous studies of nonlocal multilane models\cite{CG2019, BFK2022,CK24,CCV2022,FGR2021}. Three key differences set this work apart: first, the non-linearity of $f^k$,
second, the spatial dependence of 
$\nu^k$
  and 
$R^k$, which depicts changing road conditions; and third, the rigorous error analysis of the proposed finite volume schemes. Further, the article presents the analysis for the same choice of kernel for both the convective and source terms.

{\color{black}Another interesting question the article explores is the nonlocal-to-local limit of \eqref{PDE_NL_k} to its local counterpart proposed in \cite{HR2019} as the support of kernel $\omega_{\eta}$ goes to zero. We consider the particular case of $g^k(u)=1$ and with spatially independent $\nu^k$ and $R^k$, as the current literature on this topic, even with $N=1,$ with or without source term, is available only for this case. The local counterpart reads as:
\begin{align}\label{PDE_NL_k_l_1}
\partial_t(u^k)+\partial_x(u^k\nu^k( u^k))=R^k (\boldsymbol{u}, \boldsymbol{u}),\, \, k\in \mathcal{N},
\end{align} {\color{black} with  $R^k$ as in \eqref{eq:Rk}--\eqref{eq:Sk}.} The analysis of nonlocal to local convergence, has attracted significant attention recently, with various results, see, for example, \cite{Coc23a,Col23a,CCCNKMKL2023,CCMS2024,CCS2019,CCMS2023,CGES2021} with $R^k=0$ and $N=1.$ The article \cite{CK24} also works with multilane models, like this article and handles $\eqref{PDE_NL_k_l_1},$ with nonzero $R^k,$ but with a decreasing exponential kernel as a special choice for $\omega_{\eta}$. It will be shown in Section \ref{NLL} that the entropy solutions of \eqref{PDE_NL_k} converge to those of the local model \eqref{PDE_NL_k_l_1}, in the limit as the support $\eta$ of $\omega_\eta$ tends to zero using a source-splitting technique for a wider class of $\omega_{\eta}$.}

{In general, practical problems like the multi-agent systems, as well as laser technology, dynamics of multiple crowds/vehicles, lane formation, sedimentation and conveyor belts, see, for example, \cite{CMR2016, GKLW2016,CM2015} are modeled by a very general coupled system of balance laws for any $T>0$, where the $k^{\text{th}}$ equation is given by 
\begin{align}\label{PDE_NL_kd} 
\partial_t(u^k)+ \partial_{x}(f^{k}(u^k)\nu^{k}({x,}(\boldsymbol{\omega}_{\eta}\circledast \boldsymbol{u})^k))&=R^k ({x,}\boldsymbol{u},(\boldsymbol{\tilde{\omega}}_{\tilde{\eta}} \circledast {\boldsymbol{u}})^{k}), \qquad k\in \mathcal{N}.
\end{align}
For every $\eta,\tilde{\eta}>0$, the initial-value problem (IVP) \eqref{PDE_NL_kd}, \eqref{eq:u11A} is strongly coupled due
to the nonlocal
terms $\boldsymbol{\omega}_{\eta} \circledast  \boldsymbol{u}:\R \rightarrow \R^{N^2}$
in the convective part and also weakly coupled through $\boldsymbol{\tilde{\omega}}_{\tilde{\eta}} \circledast \boldsymbol{u}:\R \rightarrow \R^{N^2}$ in the source term where $\boldsymbol{\omega}_{\eta},\boldsymbol{\tilde{\omega}}_{\tilde{\eta}}$ are $N\times N$ matrices. For $\boldsymbol{z}=\boldsymbol{\omega}_{\eta},\boldsymbol{\tilde{\omega}}_{\tilde{\eta}}$, and for each $k \in\mathcal{N}$, 
$
(\boldsymbol{z}\circledast\boldsymbol{u})^k:= \left(z^{k,1}*u^1,\ldots,z^{k,N}*u^{N}\right).$
The analysis presented in this paper can be generalized to IVP \eqref{PDE_NL_kd}, \eqref{eq:u11A} with the following generalization of the hypothesis \ref{A1}--\ref{A4}, 
{\color{black}\begin{enumerate}[label=(\textbf{B\arabic*})]
	\item \label{B1} $f^k\in \lip(\R), f^k(1)=0=f^k(0)$;
\item \label{B2}  $\nu^k \in L^{\infty}(\R\times[0,1]^N)$, $\nu^k(\dott,\boldsymbol{u}) \in C^2(\R)$ for $\boldsymbol{u}\in \R^N$ and $\nu^k(x,\dott) \in (C^2(\R) \cap W^{2,\infty}(\R))^N$ for $x\in \R$;
 \item \label{B3}  
  There exist a constant $\mathcal{C}_0$ and a $\BV$ function $a_0:\R\rightarrow\R$ such that 
  \begin{align*}
\abs{\nu^k(x,\boldsymbol{u})-\nu^k(y,\boldsymbol{u}) } & \leq \mathcal{C}_0 \abs {a_0(x)-a_0(y)},\,\,\,\text{for all }\boldsymbol{u}\in\R^N;
\end{align*}
   \item \label{B4}For every {\color{black}$\eta\in(0,\infty]$} and $\boldsymbol{\omega}_{\eta}\in {(C^2 \cap   \BV  \cap \, W^{2,\infty}) ([0,\eta],\R^{N^2})}$, and for every {\color{black}$\tilde{\eta}\in(0,\infty]$}, $\boldsymbol{\tilde{\omega}}_{{\tilde{\eta}}} \in {(C^2 \cap   \BV  \cap \, W^{2,\infty}) ([0,\tilde{\eta}],\R^{N^2})}$; with $\omega_{\eta}^{k,j},\tilde{\omega}_{\tilde{\eta}}^{k,j}$ being non-negative and non-increasing for all $k,j\in\mathcal{N};$
   \item \label{B5} For each $x\in\R$, $R^k(x,\dott) \in \lip(\R^{2N})$, with $R^k(\dott,\boldsymbol{0},\dott)=R^k(\dott,\boldsymbol{1},\dott)=0$,  {\color{black} and for all $x \in \mathbb{R}$, $\boldsymbol{u}, \boldsymbol{v} \in \mathbb{R}^N$, and for each $j \in \mathcal{N} \setminus \{k\}$, the function $R^k$ is non-decreasing in the $j$th component of the state variable $\boldsymbol{u}$. Thus, for any $a, b \in \mathbb{R}$ with $a\le b$, the following inequality holds:
\begin{align*}
R^k \left( x, (u^1, \dots, u^{j-1}, a, u^{j+1}, \dots, u^N), \boldsymbol{v} \right) \leq R^k \left( x, (u^1, \dots, u^{j-1}, b, u^{j+1}, \dots, u^N), \boldsymbol{v} \right).
\end{align*}
   }
 \end{enumerate}}
In fact, it should be noted that $R^k$ has a very general form with dependence on all $N$ components of $\boldsymbol{u}$ and $(\boldsymbol{\tilde{\omega}}_{\eta} \circledast {\boldsymbol{u}})^{k}$. 
    The assumptions \ref{B1} and \ref{B5} are crucial, and they restrict the solutions to the interval [0,1], circumventing the $L^{\infty}$ blow up in finite time reported in \cite[eq.~(2.3)]{CM2015} and consequently lead to the well-posedness in the time interval $[0,T]$ for any $T>0$, unlike the local well-posedness in time established in \cite[Thm.~2.2]{CM2015}. {\color{black} The remaining assumptions \ref{B2}--\ref{B4} are technical assumptions required for the stability of solutions and are generalizations of \ref{A2}--\ref{A4}.} Though the analysis in this paper is presented only for a special form of $R^k$ suited to nonlocal extension of multilane traffic model of \cite{HR2015}, we give the directions on the extension of our proofs to the general class \eqref{eq:u11A}, \eqref{PDE_NL_kd} in Section \ref{exten}.}

The paper is organized as follows. In Section \ref{def}, we introduce some definitions and notation to be used in the article. In Section \ref{uni}, we prove the uniqueness result for \eqref{PDE_NL_k},\eqref{eq:u11A}. In Section \ref{exis}, we present the convergence of the {\color{black} nonlocal adaptions of the monotone schemes such as Godunov and the Lax--Friedrichs schemes, for} the IVP \eqref{PDE_NL_k},\eqref{eq:u11A}. In Section \ref{sec:error}, {\color{black}we show that  the scheme converges to the unique entropy solution at an optimal rate of 1/2 as seen in the local conservation laws \cite{Sab1997}.} In Section~\ref{exten}, we also briefly comment on extensions to general systems of the type \eqref{PDE_NL_kd}.  Finally, in Section~\ref{num}, we present some numerical experiments that illustrate the theory and {\color{black}the behavior of the entropy solution as the support of the kernel goes to
zero (nonlocal to local limit).}

\section{Definitions and notation}\label{def}
Throughout this paper, for simplicity, we scale $\omega_{\eta}$ by its $L^1$ norm and therefore assume that $\norma{\omega_{\eta}}_{L^1(\R)}=1$ {\color{black}  (extending the function to the whole line by setting it zero outside $[0,\eta]$)}. 
With $u: \overline{Q}_T\rightarrow \R$, $\boldsymbol{u}=(u^k)_{k\in\mathcal{N}}:\overline{Q}_T \rightarrow \R^{N}$ 
  and $\tau>0$, we define:
\begin{align*}
&|u|_{\lip_tL^1_x}:=\sup_{0\leq t_1<t_2\leq T}\frac{{\color{black}\norma{u^k(t_1)-u^k(t_2)}}_{L^1(\R)}}{|t_1-t_2|},& 
&|\boldsymbol{u}|_{(\lip_tL^1_x)^{N}}:=\max_{{k\in\mathcal{N}}}|u^k|_{\lip_tL^1_x}, \\ 
&|\boldsymbol{u}|_{(L^\infty_t\BV_x)^{N}}:=\max_{{k\in\mathcal{N}}}\sup_{t\in[0,T]} \TV(u^k(t,\dott)),& &
\norma{\boldsymbol{u}}_{(L^{\infty}(\overline{Q}_T))^{N}}:=\max_{{k\in\mathcal{N}}} \norma{u^k}_{L^{\infty}(\overline{Q}_T)},\\
&\gamma(u,\tau):=\sup_{\substack{
    \abs{t_1-t_2} \leq \tau\\  0\leq t_1\leq t_2\leq T }} \norma{u^k(t_1)-u^k(t_2)}_{L^1(\R)}, & &\norma{\boldsymbol{u}}_{(L^1(\overline{Q}_T))^{N}}:=\sum_{k\in\mathcal{N}}\norma{u^k}_{L^1(\overline{Q}_T) }
,\\&{\color{black}|u|_{L^{\infty}_t \lip_x}:=\sup_{t\in [0,T]}\sup_{\substack{x,y\in\R\\x\neq y}}\frac{\abs{{\color{black}u^k(t,x)-u^k(t,y)}}}{{\color{black}\abs{x-y}}}}, &&
    \boldsymbol{\gamma}(\boldsymbol{u},\tau):= \max\limits_{k\in\mathcal{N}}\gamma(u^k,\tau).
\end{align*}
{\color{black}We will use the following definition of the entropy condition, based on the Kru\v{z}kov
formulation, see, e.g., \cite{Kruzkov, HR2015}.}\\
{\color{black}\begin{definition}
  [Entropy Condition]\label{def:sol}
Let $u_0\in ((L^1 \cap \BV) (\R;[0,1]))^{N}$.  A function $\boldsymbol{u} \in (\lip([0,T];L^1(\R;[0,1]))\cap L^{\infty}([0,T]; \BV(\R)))^{N}$  is an entropy solution of \eqref{PDE_NL_k} with initial data $u_0$ if, for $k=1,\ldots,N$, 
for each $(k,\alpha)\in \mathcal{N}\times\R$, and for all\,\, 
$0\le \phi\in C_c^{\infty}([0,T)\times \R)$,
 \setlength{\abovedisplayskip}{0.1pt}
\setlength{\belowdisplayskip}{0pt}
\setlength{\abovedisplayshortskip}{0pt}
\setlength{\belowdisplayshortskip}{0pt}
\begin{align}\label{kruz2}
 \begin{split}
&\int_{Q_T}\abs{u^k(t,x)-\alpha} \phi_t(t,x)\d t\d x+ \int_{Q_T}  \mathcal{G}^k(u^k(t,x),\alpha)\mathcal{U}^k(t,x)  \phi_x(t,x) \d t \d x\\
 &-\int_{Q_T} \sgn(u^k(t,x)-\alpha) f^k(\alpha) {\mathcal{U}}_x^k(t,x) \phi(t,x )\d t \d x\\
 &+ \int_{\R} \abs{u^k(0,x)-\alpha}\phi(0,x) \d x  \geq -\int_{Q_T} \sgn(u^k(t,x)-\alpha) {\color{black}\mathcal{R}}^k_{\boldsymbol{u}}(t,x)\phi(t,x) \d t \d x,
 \end{split}
 \end{align}
 where for $a,b\in\R,$
 $ \mathcal{G}^k(a,b):=\sgn (a-b) (f^k(a)-f^k(b)), {\color{black}\mathcal{R}}^k_{\boldsymbol{u}}(t,x):= R^k(x,\boldsymbol{u}(t,x),(\boldsymbol{u} \circledast \omega_{\eta})(t,x)), \mathcal{U}^k(t,x):=\nu^k({x,}({u}^k*\omega_{\eta})(t,x))$, for $(t,x)\in Q_T$.
 \end{definition}}
\section{Kuznetsov-type estimate for nonlocal balance laws and Uniqueness}\label{uni}
 {\color{black} One of the aims of this article is to quantify the error between the approximate and the entropy solution of \eqref{PDE_NL_k},\eqref{eq:u11A} for which we need a Kuznetsov-type lemma(cf. \cite{KUZ1976} for the local case and \cite{AHV2023} for the scalar nonlocal case with $N=1$), which will be established in this section.}
To this end, we now introduce some notation. For $\epsilon,\epsilon_0>0$, define $\Phi: \overline{Q}_T^2 \rightarrow \R$ by, \begin{align*}
\Phi(t,x,s,y):=\Phi^{\epsilon,\epsilon_0}(t,x,s,y)=\Theta_{\epsilon}(x-y)\Theta_{{\epsilon}_0}(t-s), \quad {\color{black}(t,x),(s,y)\in \overline{Q}_T},
\end{align*}
where $\Theta_a(x)=\frac{1}{a}\Theta\left(\frac{x}{a}\right)$, $a>0$ and $\Theta$ is a standard symmetric mollifier with $\operatorname{supp} (\Theta) \in [-1,1]$. Furthermore, {\color{black}it is assumed that} the mollifiers satisfy \begin{align}\label{mol}\int_{\R} \Theta_a(x) \d x =1 \text{ and } \int_{\R} \abs{\Theta'_a(x)} \d x =1/a.
\end{align}It is straightforward to see that $\Phi$ is symmetric,
$\Phi_x=-\Phi_y$ and $\Phi_t=-\Phi_s$. 
For $\boldsymbol{u},\boldsymbol{v} \in ((L^1 \cap L^{\infty})(\R))^{N}, 0\le\phi\in C_c^{\infty}({\overline{Q}}_T)$, $(t,x),(s,y)\in Q_T$, $a,b\in\R$, and
$k\in\mathcal{N}$, we define the following:
\begin{align*}
&
 {\color{black}\mathcal{R}}^k_{\boldsymbol{v}}(t,x):= R^k({x,}\boldsymbol{v}(t,x),\boldsymbol{v} \circledast \omega_{\eta})(t,x),\\
&\mathcal{V}^k(t,x):=\nu^k({x,}(v^k*\omega_{\eta})(t,x)), &
 &\mathcal{G}^k(a,b):=\sgn (a-b) (f^k(a)-f^k(b)),
 \end{align*}
 {\allowdisplaybreaks
 \begin{align*}
\Lambda^k_T\left(u^k,\phi, \alpha\right)&:= \int_{Q_T}\left(\left|u^k(t,x)- \alpha\right|\phi_{t}+\mathcal{G}^k\left(u^k(t,x), \alpha\right)\mathcal{U}^k(t,x)\phi_{x}\right)\d t \d x\\
    &\quad -   \int_{Q_T}\sgn \left(u^k(t,x)- \alpha \right) f^k(\alpha)\mathcal{U}^k_x(t,x)\phi \d t \d x\\
    &\quad+\int_{Q_T} \sgn(u^k(t,x)-\alpha) {\color{black}\mathcal{R}}_{\boldsymbol{u}}^k(t,x) \phi \d t \d x\\
     & \quad -\int_{\R}\left|u^k(T,x)- \alpha\right|\phi(T,x)\d x+\int_{\R}\left|u_0^k(x)- \alpha\right|\phi(0,x)\d x,\\ 
\Lambda^k_{\epsilon,\epsilon_0}(u^k,v^k)&:=\int_{Q_T}\Lambda^k_T\left(u^k(\dott,\dott),\Phi(\dott,\dott,s,y),v^k(s,y)\right)\d s \d y, \\      
   K&:=\Big\{\boldsymbol{u}:\overline{Q}_T \rightarrow [0,1]^{N} \mid \norma{\boldsymbol{u}}_{(L^{\infty}(\overline{Q}_T))^{N}}+|\boldsymbol{u}|_{(L^{\infty}([0,T];\BV(\R)))^{N}} < \infty 
   \\&\qquad \text{ and }\norma{\boldsymbol{u}(t)}_{(L^1(\R))^{N}}=\norma{\boldsymbol{u}(0)}_{(L^1(\R))^{N}} \text{ for }t\ge 0 \Big\}.
   \end{align*}}
   In the sequel, we call $\Lambda^k_{\epsilon,\epsilon_0} (u^{k}_{\Delta},u^k)$ as the \textit{relative entropy functional}.}
   {\color{black}The following lemma compares an entropy solution of the IVP \eqref{PDE_NL_k},\eqref{eq:u11A} with an arbitrary function from the set $K$.\\
\begin{lemma}[A Kuznetsov-type lemma for  systems of nonlocal balance laws]\label{lemma:kuz}
Let $\boldsymbol{u}$ be an entropy solution of the IVP \eqref{PDE_NL_k},\eqref{eq:u11A}, $\boldsymbol{v} \in K$ and {\color{black}$\boldsymbol{v}_0(x):=\boldsymbol{v}(x,0)$ for $x\in\R$} and {\color{black}$T>0$}. Then, 
\begin{align}
 \label{est:kuz}
\begin{split}
\norma{\boldsymbol{u}(T,\dott)-\boldsymbol{v}(T,\dott)}_{(L^1(\R))^{N}}  &\leq {\mathcal{C}_T} \bigg( \sum_{k\in\mathcal{N}}\big( \gamma(v^k,\epsilon_0)-\Lambda^k_{\epsilon,\epsilon_0}(v^k,u^k) \big)  \\&\quad
+\left( \norma{\boldsymbol{u}_0-\boldsymbol{v}_0}_{(L^1(\R))^{N}}+N\big(\epsilon+\epsilon_0 \big) \right) \bigg),
\end{split}
\end{align}
where $N$ is the number of lanes and the constant $\mathcal{C}_T$ depends on {\color{black}$\norma{\boldsymbol{v}(t,\dott)}_{(L^1(\R))^N}$, $|\boldsymbol{v}|_{(L^{\infty}_t\BV_x)^{N}}$, 
$\norma{\boldsymbol{v}}_{(L^1(\overline{Q}_T))^N},\norma{\boldsymbol{v}}_{(L^{\infty}({Q}_T))^N},\norma{\boldsymbol{u}_0}_{(L^1({\R}^N)},|\boldsymbol{u}|_{
(L^{\infty}_t\BV_x)^{N}}, |\boldsymbol{u}|_{(\lip_tL^1_x)^{N}},\norma{\omega_{\eta}}_{L^{\infty}(\R)}$, $\norma{\omega_{\eta}}_{L^1(\R)}
,\abs{\omega_{\eta}}_{\lip([0,\eta])}$, $\norma{D_1\nu^k}_{L^\infty(\R\times [0,1])}, \norma{D_2\nu^k}_{L^\infty(\R\times [0,1])},\norma{D_{11}\nu^k}_{L^\infty(\R\times [0,1])}$,  \\ $\norma{D_{12}\nu^k}_{L^\infty(\R\times [0,1])}
,\norma{D_{21}\nu^k}_{L^\infty(\R\times [0,1])},$ 
$\norma{D_{22}\nu^k}_{L^\infty(\R\times [0,1])},\abs{f^k}_{\lip(\R)},\abs{\boldsymbol{S}}_{(L^{\infty}(\R;\lip({[0,1]^4})))^N}$, $\abs{\boldsymbol{\nu}}_{(\lip(\R))^{N}},N, T, \mathcal{C}_0$ and $\abs{a_{0}}_{\BV(\R)}$}, but is independent of $\epsilon,\epsilon_0$.
\end{lemma}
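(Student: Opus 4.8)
The plan is to run Kuznetsov's doubling-of-variables argument, adapted to the nonlocal system. Since $\boldsymbol{u}$ is an entropy solution, the Kru\v{z}kov inequality \eqref{kruz2} says precisely that $\Lambda^k_T(u^k,\phi,\alpha)\ge 0$ for every $k\in\mathcal{N}$, every $\alpha\in\R$, and every $0\le\phi\in C_c^\infty$. Choosing $\phi(t,x)=\Phi(t,x,s,y)$ and $\alpha=v^k(s,y)$ and integrating over $(s,y)\in Q_T$ yields the sign information $\Lambda^k_{\epsilon,\epsilon_0}(u^k,v^k)\ge 0$. The whole argument then rests on estimating the symmetrized sum $\Lambda^k_{\epsilon,\epsilon_0}(u^k,v^k)+\Lambda^k_{\epsilon,\epsilon_0}(v^k,u^k)$: writing $\Lambda^k_{\epsilon,\epsilon_0}(u^k,v^k)=\Sigma^k-\Lambda^k_{\epsilon,\epsilon_0}(v^k,u^k)\ge 0$ gives $\Sigma^k\ge\Lambda^k_{\epsilon,\epsilon_0}(v^k,u^k)$, so a lower bound for $\Sigma^k$ whose leading term is $-\,\norma{u^k(T,\dott)-v^k(T,\dott)}_{L^1(\R)}$ translates, after rearrangement, into the desired upper bound for that $L^1$ distance in terms of the defect $-\Lambda^k_{\epsilon,\epsilon_0}(v^k,u^k)$.

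First I would treat the temporal part. Using $\Phi_t=-\Phi_s$ and the symmetry of $\Phi$, the $\phi_t$ contributions of the two functionals together with the four boundary integrals at $t=0$ and $t=T$ collapse onto the time-diagonal. Because $\boldsymbol{v}$ carries no entropy structure, its time variation is absorbed through the modulus of continuity: the terms comparing $v^k$ at nearby times are bounded by $\gamma(v^k,\epsilon_0)$, while the mollification of initial and terminal data produces $\norma{u^k(T,\dott)-v^k(T,\dott)}_{L^1(\R)}$ with a negative sign and $\norma{u_0^k-v_0^k}_{L^1(\R)}$ with a positive sign, up to an error of size $\Oh(\epsilon_0)$ coming from the width of the time mollifier and from $\eqref{mol}$.

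The substance of the proof is the convective and source terms, which is where the nonlocality enters. Here one must compare the nonlocal velocity $\mathcal{U}^k(t,x)$ attached to $u^k$ with $\mathcal{V}^k(s,y)$ attached to $v^k$. I would split $\mathcal{U}^k(t,x)-\mathcal{V}^k(s,y)$ into a spatial part, controlled by $\mathcal{C}_0\abs{a_0(x)-a_0(y)}$ via \ref{A3}, and a convolution part, controlled by $\norma{D_2\nu^k}_{L^\infty}$ via \ref{A2} times the convolution mismatch; the latter is handled by inserting $(v^k*\omega_{\eta})(t,x)$, so that it splits into $\norma{u^k(t)-v^k(t)}_{L^1(\R)}$ (using the $L^\infty$/$L^1$ bounds on $\omega_{\eta}$) plus a space--time increment of $v^k*\omega_{\eta}$, itself bounded through the $\BV$ and $L^1$ bounds on $v^k$ and the regularity of $\omega_{\eta}$, giving $\Oh(\epsilon)$ for the spatial increment and $\gamma(v^k,\epsilon_0)$ for the temporal one. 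The genuinely nonlocal term $\sgn(u^k-\alpha)f^k(\alpha)\mathcal{U}^k_x\phi$ has no local analogue; controlling it requires differentiating $\mathcal{U}^k$ in $x$, which is where the $C^2\cap W^{2,\infty}$ regularity of $\nu^k$ and the $W^{2,\infty}\cap\BV$ regularity of $\omega_{\eta}$ in \ref{A2},\ref{A4} are used, together with the uniform $\BV$ bound on $u^k$ to bound $\norma{\mathcal{U}^k_x}$. After these splittings, every cross term is either of order $\epsilon+\epsilon_0$ or of the form $(\mathrm{const})\int_0^T\norma{u^k(t)-v^k(t)}_{L^1(\R)}\d t$.

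Finally the source couples the lanes, since $\mathcal{R}^k_{\boldsymbol{u}}-\mathcal{R}^k_{\boldsymbol{v}}$ depends on $u^{k\pm1},u^k$ and their convolutions. Using the local Lipschitz estimate $\eqref{est:Lip_S}$ for $S^k$ (hence for $R^k$), its spatial $\BV$ regularity, and the convolution bounds above, this contribution is dominated by $(\mathrm{const})\sum_{j}\int_0^T\norma{u^j(t)-v^j(t)}_{L^1(\R)}\d t$. Collecting the temporal, convective, and source estimates, summing over $k\in\mathcal{N}$, and dropping the nonnegative $\Lambda^k_{\epsilon,\epsilon_0}(u^k,v^k)$, I arrive at an inequality of the form $\sum_k\norma{u^k(T,\dott)-v^k(T,\dott)}_{L^1(\R)}\le C\big(\sum_k(\gamma(v^k,\epsilon_0)-\Lambda^k_{\epsilon,\epsilon_0}(v^k,u^k))+\norma{\boldsymbol{u}_0-\boldsymbol{v}_0}_{(L^1(\R))^N}+N(\epsilon+\epsilon_0)\big)+C\int_0^T\sum_k\norma{u^k(t,\dott)-v^k(t,\dott)}_{L^1(\R)}\d t$; a Gronwall argument in the time variable then removes the last integral and fixes $\mathcal{C}_T$ with the asserted dependencies. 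The hardest step is the convective mismatch together with the $\mathcal{U}^k_x$ term: keeping the nonlocal velocity differences from generating uncontrolled factors, and ensuring they collapse to time-integrated $L^1$ distances rather than something worse, is the crux of the estimate.
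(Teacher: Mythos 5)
Your proposal follows essentially the same route as the paper's proof: the sign $\Lambda^k_{\epsilon,\epsilon_0}(u^k,v^k)\ge 0$ from the entropy inequality, the symmetrized relative-entropy sum split into boundary terms (yielding $-\norma{u^k(T,\dott)-v^k(T,\dott)}_{L^1(\R)}$, $\norma{u_0^k-v_0^k}_{L^1(\R)}$, $\gamma(v^k,\epsilon_0)$, and $\Oh(\epsilon+\epsilon_0)$), a convective piece $I^k_{\Phi}+I^k_{\Phi'}$, and a source piece $I^k_S$, each reduced to $\Oh(\epsilon+\epsilon_0)$ plus time-integrated $L^1$ differences and closed by a Gronwall argument (which the paper leaves implicit in its final combination of \eqref{est:I_T^k}--\eqref{est:I_S}). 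At the crux you correctly single out, the one detail worth making explicit is that for the one-sided kernel $\tilde{\mathcal{V}}^k_x$ fails to be Lipschitz in $x$ (unlike in the smooth-kernel setting of the earlier work the paper cites), so the mismatch $\mathcal{V}^k_y(s,y)-\mathcal{U}^k_x(t,x)$ must be controlled via the $L^\infty$ and $\BV$-in-space bounds on $\tilde{\mathcal{V}}^k_x$ together with its $\lip_tL^1_x$-type time continuity, which is precisely the mechanism your BV-based bound gestures at.
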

 \begin{proof} {\color{black}Let $(t,x),(s,y)\in \overline{Q}_T$.}  For any $k\in\mathcal{N}$, 
 add the {\color{black}{relative}} entropy functionals $\Lambda^k_{\epsilon,\epsilon_0}(u^k,v^k)$ and $\Lambda^k_{\epsilon,\epsilon_0}(v^k,u^k)$, and invoke the symmetry of $\Phi$ to get:
\begin{align*}
\Lambda^k_{\epsilon,\epsilon_0}(u^k,v^k)+\Lambda^k_{\epsilon,\epsilon_0}(v^k,u^k)&=
I^k_{\Phi'}+I^k_{\Phi}+I^k_S+I^k_0-I^k_T,
\end{align*}  
with {\allowdisplaybreaks
\begin{align*}
I^k_{\Phi'}&=\int_{Q_T^2}\mathcal{G}^k(u^k(t,x),v^k(s,y))(\mathcal{U}^k(t,x)-\mathcal{V}^k(s,y)) \Phi_{x}(t,x,s,y)\d t \d x \d s \d y,\\[2mm]
I^k_{\Phi}&=-\int_{Q^2_T} \sgn (u^k(t,x)-v^k(s,y))f^k(v^k(s,y))  \mathcal{U}^k_x(t,x)\Phi(t,x,s,y) \d x \d t  \d y \d s\\
&\quad +\int_{Q^2_T}\sgn (u^k(t,x)-v^k(s,y))f^k(u^k(t,x))  \mathcal{V}^k_y(s,y)\Phi(t,x,s,y) \d x \d t  \d y \d s,\\[2mm]
I^k_S&=-\int_{Q^2_T} \sgn (u^k(t,x)-v^k(s,y)) \left({\color{black}\mathcal{R}}_{\boldsymbol{v}}^k(s,y)-{\color{black}\mathcal{R}}_{\boldsymbol{u}}^k(t,x)\right) \Phi(t,x,s,y) \d x \d t  \d y \d s ,\\[2mm]
I^k_T&=\int_{Q_T}\int_{\R}\left|u^k(T,x)-v^k(t,y)\right|\Phi(t,x,T,y)\d y  \d x \d t \\&\quad+\int_{Q_T}\int_{\R}\left|v^k(T,y)-u^k(t,x)\right|\Phi(t,x,T,y)\d y  \d x \d t,\\[2mm]
I^k_0&=\int_{Q_T}\int_{\R}\left(\left|u^k_0(x)-v^k(t,y)\right|+\left|v^k_0(y)-u^k(t,x)\right|\right)\Phi(t,x,0,y)\d x \d y \d t.
 \end{align*}}
 Since $\boldsymbol{u}$ is the entropy solution of the IVP \eqref{PDE_NL_k},\eqref{eq:u11A}, we have that $\Lambda^k_{\epsilon,\epsilon_0}(u^k,v^k)\ge 0, $ and hence
 \begin{align}
 \label{est:I_T^k}
I^k_T &\le-  \Lambda^k_{\epsilon,\epsilon_0}(v^k,u^k) + I^k_{\Phi'}+I^k_{\Phi}+I^k_S+I^k_0.
\end{align}The terms $I^k_0$ and $I^k_T$  can be estimated as in \cite[Thm.~3.14]{HR2015} (see also \cite[Lemma~2]{GTV2022}) to get:
{\begin{align}\label{est:I_0,T}\begin{split}
   I^k_T&\ge \norma{u^k(T,\dott)-v^k(T,\dott)}_{L^1(\R)}-\mathcal{C}_1(\epsilon+\epsilon_0+\gamma(v^k,\epsilon_0)),\\[2mm]
   I^k_0&\le \norma{v^k_0-u^k_0}_{L^1(\R)}+\mathcal{C}_1(\epsilon+\epsilon_0+\gamma(v^k,\epsilon_0)),
   \end{split}
\end{align}}
where $\mathcal{C}_1=3 \max \{|\boldsymbol{u}|_{
(L^{\infty}_t\BV_x)^{N}}, |\boldsymbol{v}|_{(L^{\infty}_t\BV_x)^{N}},|\boldsymbol{u}|_{(\lip_tL^1_x)^{N}}\}$.

{\color{black}The proof of the lemma follows by showing the following two estimates:
\begin{align}
I_{\Phi}^k+I_{\Phi'}^k&\label{est:PP'}\le\mathcal{C}_{\Phi\Phi'}(\epsilon +\epsilon_0 + \int_0^T \norma{{\boldsymbol{u}(s,\dott)-\boldsymbol{v}(s,\dott)}}_{(L^1(\R))^{N}} \d s),   \\
I_S^k&\label{est:I_S}\le\mathcal{C}_{S}(\epsilon +\epsilon_0 + \int_0^T \norma{{\boldsymbol{u}(s,\dott)-\boldsymbol{v}(s,\dott)}}_{(L^1(\R))^{N}} \d s),  
\end{align}
where the constants $\mathcal{C}_{\Phi\Phi'}$ and $\mathcal{C}_{S}$ are appropriate constants defined later in the proof and  may depend on $\boldsymbol{u},\boldsymbol{v},\boldsymbol{f},\omega_{\eta}, \boldsymbol{\nu},T$ but would be independent of $\epsilon$ and $\epsilon_0$. 
Now, we prove \eqref{est:PP'}--\eqref{est:I_S} one by one. \\
\textbf{Proof of \eqref{est:PP'}}: We follow closely the proof of \cite[Lemma~3.2]{AHV2023}.
 Using integration by parts,  $I^k_{\Phi'}$ can be written as,
 \begin{align*}
I^k_{\Phi'}&=-\int_{Q^2_T}\Phi\big[\mathcal{G}^k_x(u^k(t,x),v^k(s,y))(\mathcal{U}^k(t,x)-\mathcal{V}^k(s,y))\\&\qquad \qquad  \quad +\mathcal{G}^k(u^k(t,x),v^k(s,y))\mathcal{U}^{k}_x(t,x) \big]\d t \d x \d y \d s.
\end{align*}
Consequently,
\begin{align}\nonumber
&I^k_{\Phi'}+I^k_{\Phi}\\ \nonumber &=-\int_{Q^2_T}\Phi \big(\mathcal{G}^k_x(u^k(t,x),v^k(s,y))(\mathcal{U}^k(t,x)-\mathcal{V}^k(s,y))\\& \nonumber \qquad \qquad +\sgn(u^k(t,x)-v^k(s,y))(f^k(u^k(t,x))-f(v^k(s,y)))\mathcal{U}^{k}_x(t,x) \big)\d t \d x \d y \d s  \\ \nonumber
&\quad-\int_{Q^2_T}\sgn (u^k(t,x)(t,x)-v^k(s,y)) \Big(f^k(v^k(s,y))  \mathcal{U}^{k}_x(t,x)-f^k(u^k(t,x))\mathcal{V}^k_y(s,y)\Big)\\ \nonumber
&\qquad \qquad \times \Phi\d x \d t \d y \d s \nonumber \\ \nonumber
  &=-\int_{Q^2_T}\Phi(\mathcal{G}^k_x(u^k(t,x),v^k(s,y))\big[(\mathcal{U}^k(t,x)-\mathcal{V}^k(s,y))\\ \nonumber 
  & \qquad \qquad +\sgn(u^k(t,x)-v^k(s,y))f^k(u^k(t,x))(\mathcal{U}^{k}_x(t,x)-\mathcal{V}^k_y(s,y))\big]\d t \d x \d y \d s \nonumber\\
  &:= I_{\mathcal{U}^k}+I_{\mathcal{U}^{k}_x} \label{est:IUU_x}. 
\end{align}We proceed by estimating each of the terms $I_{\mathcal{U}^k}$ and $I_{\mathcal{U}^{k}_x}$ one by one.
Note that using the properties of the convolution we have:
\begin{align}
\begin{split}\label{V_t:Lip1}
\abs{\tilde{\mathcal{V}}^k(s,x)-\tilde{\mathcal{U}}^k(s,x)}
&=\abs{\int_{\R}(v^k(s,z)-u^k(s,z))\omega_{\eta}(x-z)\d z}\\
&\le\norma{\omega_{\eta}}_{L^{\infty}(\R)}
\norma{v^k(s,\dott)-u^k(s,\dott)}_{L^1(\R)},\\
\abs{\tilde{\mathcal{U}}^k(t,x)-\tilde{\mathcal{U}}^k(s,x)}
&=\abs{\int_{\R}(u^k(t,z)-u^k(s,z))\omega_{\eta}(x-z)\d z}\\
&\le\norma{\omega_{\eta}}_{L^{\infty}(\R)}
\norma{u^k(s,\dott)-u^k(t,\dott)}_{L^1(\R)},\\
\abs{\tilde{\mathcal{V}}^k(s,y)-\tilde{\mathcal{V}}^k(s,x)}
&=\abs{\int_{\R}v^k(s,z)(\omega_{\eta}(x-z)-\omega_{\eta}(y-z))\d z}\\
&\le\abs{\omega_{\eta}}_{\lip([0,\eta])}
\norma{v^k(s,\dott)}_{L^1(\R)}|y-x|.
\end{split}
\end{align}
First we consider the term $I_{\mathcal{U}^k}$.
\begin{align}
\begin{split}\label{Ik}
I_{\mathcal{U}^k}&=\int_{Q^2_T}\Phi\big[\mathcal{G}^k_x(u^k(t,x),v^k(s,y))(\mathcal{V}^k(s,y)-\mathcal{U}^k(t,x))\big]\d{x}\d{t}  \d{y}\d{s}\\&\le\int_{Q^2_T}\Phi\abs{f^k}_{\lip(\R)}\abs{D_2u^k(t,x)}(\abs{\mathcal{V}^k(s,y)-\mathcal{V}^k(s,x)}+\abs{\mathcal{V}^k(s,x)-\mathcal{U}^k(s,x)})\d{x}\d{t}  \d{y}\d{s}\\
&\qquad+\int_{Q^2_T}\Phi\abs{f^k}_{\lip(\R)}\abs{D_2u^k(t,x)}\abs{\mathcal{U}^k(s,x)-\mathcal{U}^k(t,x)}\d{x}\d{t}  \d{y}\d{s},\end{split}\end{align}
since $|\mathcal{G}^k_x(u^k(t,x),v^k(s,y))|\le \abs{f^k}_{\lip(\R)}\abs{D_2u^k(t,x)}$ (in the sense of measures, see~\cite[Lemma A2.1]{BP1998} for details). 
   Note that 
\begin{align}
\begin{split}\label{V_t}
&\abs{{\mathcal{V}}^k(s,y)-\mathcal{V}^k(s,x)}=\abs{\nu^k({y,}(\tilde{\mathcal{V}}^k(s,y))-\nu^k({x,}(\tilde{\mathcal{V}}^k(s,x)}\\
&\leq \abs{\nu^k({y,}(\tilde{\mathcal{V}}^k(s,y))-\nu^k({x,}(\tilde{\mathcal{V}}^k(s,y))}+\abs{\nu^k({x,}(\tilde{\mathcal{V}}^k(s,y))-\nu^k({x,}(\tilde{\mathcal{V}}^k(s,x))}\\
&\le \norma{D_2\nu^k}_{L^\infty(\R\times [0,1])} \abs{\tilde{\mathcal{V}}^k(s,y)-\tilde{\mathcal{V}}^k(s,x)}+\norma{D_1\nu^k}_{L^\infty(\R\times [0,1])} \abs{y-x},\\
&\le \left(\norma{D_2\nu^k}_{L^\infty(\R\times [0,1])} \abs{\omega_{\eta}}_{\lip([0,\eta])}
\norma{v^k(s,\dott)}_{L^1(\R)}+\norma{D_1\nu^k}_{L^\infty(\R\times [0,1])}\right) \abs{y-x},
\end{split}
\end{align}owing to \eqref{V_t:Lip1}. Further,
\begin{align} \begin{split}
\abs{\mathcal{V}^k(s,x)-\mathcal{U}^k(s,x)}&=\abs{\nu^k({x,}\tilde{\mathcal{V}}^k(s,x))-\nu^k({x,}\tilde{\mathcal{U}}^k(t,x))}\\&\le \norma{D_2\nu^k}_{L^\infty(\R\times[0,1])}\abs{\tilde{\mathcal{V}}^k(s,x))-\tilde{\mathcal{U}}^k(t,x))}\\
&\le\norma{D_2\nu^k}_{L^\infty(\R\times [0,1])}\norma{\omega_{\eta}}_{L^{\infty}(\R)}\norma{v^k(s,\dott)-u^k(s,\dott)}_{L^1(\R)}\label{V_t:Lip2}, 
\end{split}
\end{align}\begin{align}
\begin{split}
\abs{\mathcal{U}^k(s,x)-\mathcal{U}^k(t,x)}&=\abs{\nu^k({x,}\tilde{\mathcal{U}}^k(s,x))-\nu^k({x,}\tilde{\mathcal{U}}^k(t,x))}\\ 
&\le \norma{D_2\nu^k}_{L^\infty(\R\times[0,1])}\abs{\tilde{\mathcal{U}}^k(s,x))-\tilde{\mathcal{U}}^k(t,x))}\\
&\le\norma{D_2\nu^k}_{L^\infty(\R\times [0,1])}\norma{\omega_{\eta}}_{L^{\infty}(\R)}
\norma{u^k(s,\dott)-u^k(t,\dott)}_{L^1(\R)}\label{V_t:Lip5}, 
\end{split}
\end{align}
owing again to \eqref{V_t:Lip1}. 
Consequently, using \eqref{V_t}--\eqref{V_t:Lip5} in \eqref{Ik} (see also \cite[Lemma~3.2, pp.~243--247]{AHV2023}), we get:
\begin{align*}
I_{\mathcal{U}^k}&\le \abs{f^k}_{\lip(\R)}\int_{Q^2_T}\Phi\abs{D_2u^k(t,x)}\abs{\mathcal{V}^k(s,y)-\mathcal{U}^k(t,x)}\d{x} \d{t}\d{y} \d{s}\\
&\leq I^1_{\mathcal{U}^k}+I^2_{\mathcal{U}^k}+I^3_{\mathcal{U}^k},
\end{align*}
where  $I^1_{\mathcal{U}^k},I^2_{\mathcal{U}^k}$, and $I^3_{\mathcal{U}^k}$ satisfy the following estimates, using \eqref{V_t:Lip1}, \eqref{V_t:Lip2}--\eqref{V_t:Lip5}:
\begin{align}
\begin{split}\label{I1u}
I^1_{\mathcal{U}^k}&=\int_{Q^2_T}\Phi\abs{f^k}_{\lip(\R)}\abs{D_2u^k(t,x)}\abs{\mathcal{V}^k(s,y)-\mathcal{V}^k(s,x)}\d{x}
\d{y}\d{t}\d{s}\\
&\le\abs{f^k}_{\lip(\R)}\norma{D_2\nu^k}_{L^\infty(\R\times [0,1])} \abs{\omega_{\eta}}_{\lip([0,\eta])}\\
&\qquad\times\int_{Q^2_T}\Theta_{\epsilon}(x-y)\Theta_{{\epsilon}_0}(t-s)\abs{D_2u^k(t,x)}
\norma{v^k(s,\dott)}_{L^1(\R)}|y-x|\d{x}
\d{y}\d{t}\d{s}\\
&\qquad+\abs{f^k}_{\lip(\R)}\norma{D_1\nu^k}_{L^\infty(\R\times [0,1])} \int_{Q^2_T}\Theta_{\epsilon}(x-y)\Theta_{{\epsilon}_0}(t-s)\abs{D_2u^k(t,x)}  \abs{y-x}\d{x}
\d{y}\d{t}\d{s}\\
&\le \abs{f^k}_{\lip(\R)}\norma{D_2\nu^k}_{L^\infty(\R\times [0,1])} \abs{\omega_{\eta}}_{\lip([0,\eta])}\epsilon\abs{u^k}_{L^{\infty}_t\operatorname{BV}_x}
\norma{{v}^k}_{L^1(\overline{Q}_T)}\\
&\qquad+\abs{f^k}_{\lip(\R)}\norma{D_1\nu^k}_{L^\infty(\R\times [0,1])} T\epsilon\abs{u^k}_{L^{\infty}_t\operatorname{BV}_x},
\end{split}
\end{align}
\begin{align*}
I^2_{\mathcal{U}^k}&=\int_{Q^2_T}\Phi\abs{f^k}_{\lip(\R)}\abs{D_2u^k(t,x)}\abs{\mathcal{V}^k(s,x)-\mathcal{U}^k(s,x)}\d{x}
\d{y}\d{t}\d{s}\\&\le\abs{f^k}_{\lip(\R)}\norma{D_2\nu^k}_{L^\infty(\R\times [0,1])}\norma{\omega_{\eta}}_{L^{\infty}(\R)}\\
&\qquad\times\int_{Q_T^2}\Theta_{{\epsilon}}(x-y)\Theta_{{\epsilon}_0}(t-s)
\abs{D_2u^k(t,x)}\norma{v^k(s,\dott)-u^k(s,\dott)}_{L^1(\R)}\d{t} \d{s}\d{x}\d{y}
\\
&\le \abs{f^k}_{\lip(\R)}\norma{D_2\nu^k}_{L^\infty(\R\times [0,1])}\norma{\omega_{\eta}}_{L^{\infty}(\R)}\abs{u^k}_{L^{\infty}_t \operatorname{BV}_x}\int_{0}^T
\norma{v^k(s,\dott)-u^k(s,\dott)}_{L^1(\R)}\d{s},\end{align*} and
\begin{align*}
I^3_{\mathcal{U}^k}
&=\int_{Q^2_T}\Phi\abs{f^k}_{\lip(\R)}\abs{D_2u^k(t,x)}\abs{\mathcal{U}^k(s,x)-\mathcal{U}^k(t,x)}\d{x}
\d{y}\d{t}\d{s}\\&\le\abs{f^k}_{\lip(\R)}\norma{D_2\nu^k}_{L^\infty(\R\times [0,1])}\norma{\omega_{\eta}}_{L^{\infty}(\R)}\\&\qquad \times \int_{Q^2_T}\Theta_{\epsilon}(x-y)\abs{D_2u^k(t,x)}\Theta_{{\epsilon}_0}(t-s)
\norma{u^k(s,\dott)-u^k(t,\dott)}_{L^1(\R)}\d{x} \d{y}\d{t}\d{s}\\
&\le \abs{f^k}_{\lip(\R)}\norma{D_2\nu^k}_{L^\infty(\R\times [0,1])}\norma{\omega_{\eta}}_{L^{\infty}(\R)}|u^k|_{\lip_tL^1_x}\\&\qquad\times\int_{Q^2_T}\Theta_{\epsilon}(x-y)\Theta_{{\epsilon}_0}(t-s)\abs{D_2u^k(t,x)}
|t-s|\d{x} \d{y}\d{t}\d{s}\\
&\leq \abs{f^k}_{\lip(\R)}\norma{D_2\nu^k}_{L^\infty(\R\times [0,1])}\norma{\omega_{\eta}}_{L^{\infty}(\R)}|u^k|_{\lip_tL^1_x}\abs{u^k}_{L^\infty_t \operatorname{BV}_x} \int_0^T\int_0^T\Theta_{{\epsilon}_0}(t-s)
{\epsilon_0}\d{t}\d{s}\\
&\leq \abs{f^k}_{\lip(\R)}\norma{D_2\nu^k}_{L^\infty(\R\times [0,1])}\norma{\omega_{\eta}}_{L^{\infty}(\R)}|u^k|_{\lip_tL^1_x}\abs{u^k}_{L^\infty_t \operatorname{BV}_x} T
 {\epsilon_0}.\end{align*}
Before, we proceed further, note that, for $s\in [0,T]$ and for a.e. $x\in \R$,  we have
\begin{align}\label{U_x}
{\tilde{\mathcal{V}}}^{k}_x(s,x)
&= \int_{x}^{x+\eta}v^k(s,z) \omega_{\eta}^{'}(z-x)\d z + v^k(s,x+\eta)\omega_{\eta}(\eta)-v^k(s,x)\omega_{\eta}(0).
\end{align}
Consequently, for $s\in [0,T],$
\begin{align}\label{VxLinf}
\norma{\tilde{\mathcal{V}}^{k}_x(s,\dott)}_{L^{\infty}(\R)}&\leq 3 \norma{v^k}_{L^{\infty}({Q}_T)}
\norma{\omega_{\eta}}_{L^{\infty}(\R)}.
\end{align}This implies that  $\tilde{\mathcal{V}}^k(s,\dott)$ is indeed a Lipschitz continuous function for $s\in [0,T].$
Note that due to $\omega_{\eta}$ being less regular compared to the kernel in \cite{AHV2023},  the function $\tilde{\mathcal{V}}^{k}_x$ is  not Lipschitz continuous in the space variable, restricting us to follow  arguments used in treating the terms $I_{\mathcal{U}^k}$ to treat the term $I_{\mathcal{U}_x^k}$ (unlike in \cite[Lemma~3.2, pp.~247--249]{AHV2023} where $\tilde{\mathcal{V}}^{k}_x$ turns out to be Lipschitz in space variable).
Instead, we take an alternative approach to estimate the term $I_{\mathcal{U}_x^k}$.
Before we begin estimating $I_{\mathcal{U}_x^k}$, note that from  \eqref{U_x}, we get: 
\begin{align}\nonumber
\int_{\R}\abs{{\tilde{\mathcal{V}}}^{k}_x(s,x)} \d x 
& \leq \int_{\R} \int_{x}^{x+\eta} \abs{v^k(s,z)} \abs{\omega_{\eta}^{'}(z-x)}\d z \d x + 2 \norma{v^k(s,\dott)}_{L^1(\R)} \norma{\omega_{\eta}}_{L^{\infty}(\R)}\\ \label{VxBV}
& \leq 3\norma{v^k(s,\dott)}_{L^1(\R)} \norma{\omega_{\eta}}_{L^{\infty}(\R)}< \infty,
\end{align}
which implies that $\tilde{\mathcal{V}}^k \in L^{\infty}(0,T;\BV(\R))$.   Also, from \eqref{U_x}, it follows that \begin{align}
\begin{split}\label{V_xBV}|\tilde{\mathcal{V}}^k_x (s,\dott)|_{\BV(\R)} &\leq 3 \norma{\omega_{\eta}}_{L^{\infty}(\R)}|v^k(s,\dott)|_{\BV(\R)}.
\end{split}
\end{align}
Similar bounds can be obtained on $\tilde{\mathcal{U}}^k$ and $\tilde{\mathcal{U}}_x^k$. Additionally, again using \eqref{U_x}, we get
\begin{align}
\begin{split}\label{U_xT}
\norma{\tilde{\mathcal{U}}^k_x (s,\dott)-\tilde{\mathcal{U}}^k_x (t,\dott)}_{L^1(\R)}&\leq 3 \norma{\omega_{\eta}}_{L^{\infty}(\R)}\norma{u^k(s,\dott)-u^k(t,\dott)}_{L^1(\R)}. \end{split}
\end{align} 
Let us now consider,
\begin{align}\nonumber
I_{\mathcal{U}^{k}_x}&=\int_{Q^2_T}\Phi\sgn(u^k(t,x)-v^k(s,y))f^k(u^k(t,x))(\mathcal{U}^{k}_x(t,x)-\mathcal{V}^k_y(s,y))\d t \d x \d y \d s\\
 &\le \abs{f^k}_{\lip(\R)}\int_{Q^2_T}\Theta_{\epsilon}(x-y)\Theta_{{\epsilon}_0}(t-s)\abs{u^k(t,x)}|\mathcal{V}^{k}_{y}(s,y)-\mathcal{U}^k_{x}(t,x)|\d t \d x \d y \d s. \label{IUx}
 \end{align}
Further, for any $s, t\in [0,T]$, and for a.e. $x,y \in \R$, we have
 \begin{align}
|\mathcal{V}^{k}_{y}(s,y)-\mathcal{U}^k_{x}(t,x)| \le\abs{\mathcal{V}^k_{y}(s,y)-\mathcal{V}^k_{x}(s,x)}+\abs{\mathcal{V}^k_{x}(s,x)-\mathcal{U}^k_{x}(s,x)}+\abs{\mathcal{U}^k_{x}(s,x)-\mathcal{U}^k_{x}(t,x)}.\label{u_b}\end{align}
We start by estimating each of the terms on the RHS of \eqref{u_b} one by one.
 Firstly, we consider
\begin{align*}
\abs{\mathcal{V}^{k}_x(s,x)-\mathcal{V}^{k}_y(s,y)} &\leq \abs{D_1\nu^k(x,\tilde{\mathcal{V}}^k(s,x))-D_1\nu^k(y,\tilde{\mathcal{V}}^k(s,y))}\\&\quad +\abs{D_2\nu^k(x,\tilde{\mathcal{V}}^k(s,x))\tilde{\mathcal{V}}_x^k(s,x)-D_2\nu^k(y,\tilde{\mathcal{V}}^k(s,y))\tilde{\mathcal{V}}_y^k(s,y)}.
\end{align*}
Further, using \eqref{V_t:Lip1}, we have,
{\allowdisplaybreaks\begin{align}\label{D0} \begin{split}
&\abs{D_1\nu^k(x,\tilde{\mathcal{V}}^k(s,x))-D_1\nu^k(y,\tilde{\mathcal{V}}^k(s,y))}\\
&\le\abs{D_1\nu^k({x,}(\tilde{\mathcal{V}}^k(s,x)))-D_1\nu^k({x,}(\tilde{\mathcal{V}}^k(s,y)))}+\abs{D_1\nu^k({x,}(\tilde{\mathcal{V}}^k(s,y))-D_1\nu^k({y,}(\tilde{\mathcal{V}}^k(s,y))}\\
&\le \norma{D_{12}\nu^k}_{L^\infty(\R\times [0,1])} \abs{\tilde{\mathcal{V}}^k(s,x)-\tilde{\mathcal{V}}^k(s,y)}+\norma{D_{11}\nu^k}_{L^\infty(\R\times [0,1])}|y-x|\\
&\le \abs{\omega_{\eta}}_{\lip([0,\eta])}
\norma{v^k(s,\dott)}_{L^1(\R)}|y-x|\norma{D_{12}\nu^k}_{L^\infty(\R\times [0,1])} +\norma{D_{11}\nu^k}_{L^\infty(\R\times [0,1])}|y-x|
\end{split}
\end{align}}
and
{\allowdisplaybreaks
\begin{align}
\begin{split}\label{D-1}
&\abs{D_2\nu^k(x,\tilde{\mathcal{V}}^k(s,x))\tilde{\mathcal{V}}_x^k(s,x)-D_2\nu^k(y,\tilde{\mathcal{V}}^k(s,y))\tilde{\mathcal{V}}_y^k(s,y)}\\
&\le\abs{D_2\nu^k({x,}(\tilde{\mathcal{V}}^k(s,x)))-D_2\nu^k({x,}(\tilde{\mathcal{V}}^k(s,y)))}\abs{\tilde{\mathcal{V}}_x^k(s,x)}\\&\qquad+\abs{D_2\nu^k({x,}(\tilde{\mathcal{V}}^k(s,y))-D_2\nu^k({y,}(\tilde{\mathcal{V}}^k(s,y))}\abs{\tilde{\mathcal{V}}_x^k(s,x)}
\\
&\qquad+\abs{D_2\nu^k({y,}(\tilde{\mathcal{V}}^k(s,y))}\abs{\tilde{\mathcal{V}}_y^k(s,y)-\tilde{\mathcal{V}}_x^k(s,x)}\\
&\le \norma{D_{22}\nu^k}_{L^\infty(\R\times [0,1])} \abs{\tilde{\mathcal{V}}^k(s,x)-\tilde{\mathcal{V}}^k(s,y)}\abs{\tilde{\mathcal{V}}_x^k(s,x)}+\norma{D_{21}\nu^k}_{L^\infty(\R\times [0,1])}|y-x|\abs{\tilde{\mathcal{V}}_x^k(s,x)}\\
&\qquad+\abs{D_2\nu^k({y,}(\tilde{\mathcal{V}}^k(s,y))}\abs{\tilde{\mathcal{V}}_y^k(s,y)-\tilde{\mathcal{V}}_x^k(s,x)}\\
&\le \norma{D_{22}\nu^k}_{L^\infty(\R\times [0,1])} \abs{\omega_{\eta}}_{\lip([0,\eta])}
\norma{v^k(s,\dott)}_{L^1(\R)}|y-x|\abs{\tilde{\mathcal{V}}_x^k(s,x)}\\
&\qquad+\norma{D_{21}\nu^k}_{L^\infty(\R\times [0,1])}|y-x|\abs{\tilde{\mathcal{V}}_x^k(s,x)}+\abs{D_2\nu^k({y,}(\tilde{\mathcal{V}}^k(s,y))}\abs{\tilde{\mathcal{V}}_y^k(s,y)-\tilde{\mathcal{V}}_x^k(s,x)}. 
\end{split}
\end{align}}
The second term on the RHS of \eqref{u_b} can be estimated as follows using \eqref{V_t:Lip1}:
\begin{align*}
\abs{\mathcal{V}^{k}_x(s,x)-\mathcal{U}^{k}_x(s,x)} &\leq  \abs{D_1\nu^k(x,\tilde{\mathcal{U}}^k(s,x))-D_1\nu^k(x,\tilde{\mathcal{V}}^k(s,x))}\\&\quad +\abs{D_2\nu^k(x,\tilde{\mathcal{U}}^k(s,x))\tilde{\mathcal{U}}_x^k(s,x)-D_2\nu^k(x,\tilde{\mathcal{V}}^k(s,x))\tilde{\mathcal{V}}_x^k(s,x)},
\end{align*}
where 
\begin{align}
\begin{split}
    \label{D1}
&\abs{D_1\nu^k(x,\tilde{\mathcal{U}}^k(s,x))-D_1\nu^k(x,\tilde{\mathcal{V}}^k(s,x))}\\
&\le \norma{D_{12}\nu^k}_{L^\infty(\R\times [0,1])} \abs{\tilde{\mathcal{V}}^k(s,x)-\tilde{\mathcal{U}}^k(s,x)}\\&\le \norma{D_{12}\nu^k}_{L^\infty(\R\times [0,1])}\norma{\omega_{\eta}}_{L^{\infty}(\R)}\norma{v^k(s,\dott)-u^k(s,\dott)}_{L^1(\R)},
\end{split}
\end{align}
and 
\begin{align}\label{D2}
\begin{split}
&\abs{D_2\nu^k(x,\tilde{\mathcal{U}}^k(s,x))\tilde{\mathcal{U}}_x^k(s,x)-D_2\nu^k(x,\tilde{\mathcal{V}}^k(s,x))\tilde{\mathcal{V}}_x^k(s,x)}\\
&\le\abs{D_2\nu^k({x,}(\tilde{\mathcal{V}}^k(s,x)))-D_2\nu^k({x,}(\tilde{\mathcal{U}}^k(s,x)))}\abs{\tilde{\mathcal{V}}_x^k(s,x)}\\&\quad+\abs{D_2\nu^k({x,}(\tilde{\mathcal{U}}^k(s,x))}\abs{\tilde{\mathcal{U}}_x^k(s,x)-\tilde{\mathcal{V}}_x^k(s,x)}\\
&\le  \norma{D_{22}\nu^k}_{L^\infty(\R\times [0,1])} \abs{\tilde{\mathcal{V}}^k(s,x)-\tilde{\mathcal{U}}^k(s,x)}\abs{\tilde{\mathcal{V}}_x^k(s,x)}\\
&\quad +\abs{D_2\nu^k({x,}(\tilde{\mathcal{U}}^k(s,x))}\abs{\tilde{\mathcal{U}}_x^k(s,x)-\tilde{\mathcal{V}}_x^k(s,x)}\\
&\le \norma{D_{22}\nu^k}_{L^\infty(\R\times [0,1])} \norma{\omega_{\eta}}_{L^{\infty}(\R)}
\norma{v^k(s,\dott)-u^k(s,\dott)}_{L^1(\R)}\abs{\tilde{\mathcal{V}}_x^k(s,x)}\\&\quad+\abs{D_2\nu^k({x,}(\tilde{\mathcal{U}}^k(s,x))}\abs{\tilde{\mathcal{U}}_x^k(s,x)-\tilde{\mathcal{V}}_x^k(s,x)}.
\end{split}
\end{align}  
Finally, we consider the last term of \eqref{u_b}.
\begin{align*}
\abs{\mathcal{U}^{k}_x(t,x)-\mathcal{U}^{k}_x(s,x)} &\leq  \abs{D_1\nu^k(x,\tilde{\mathcal{U}}^k(s,x))-D_1\nu^k(x,\tilde{\mathcal{U}}^k(t,x))}\\&\quad +\abs{D_2\nu^k(x,\tilde{\mathcal{U}}^k(s,x))\tilde{\mathcal{U}}_x^k(s,x)-D_2\nu^k(x,\tilde{\mathcal{U}}^k(t,x))\tilde{\mathcal{U}}_x^k(t,x)},
\end{align*}
where,
\begin{align}\label{D3}
\begin{split}
&\abs{D_1\nu^k(x,\tilde{\mathcal{U}}^k(s,x))-D_1\nu^k(x,\tilde{\mathcal{U}}^k(t,x))}\\&\le \norma{D_{12}\nu^k}_{L^\infty(\R\times [0,1])} \abs{\tilde{\mathcal{U}}^k(s,x)-\tilde{\mathcal{U}}^k(t,x)}\\
&\le \norma{D_{12}\nu^k}_{L^\infty(\R\times [0,1])} \norma{\omega_{\eta}}_{L^{\infty}(\R)}
\norma{u^k(s,\dott)-u^k(t,\dott)}_{L^1(\R)}
\end{split}
\end{align}
and 
\begin{align}\label{D4}
\begin{split}
&\abs{D_2\nu^k(x,\tilde{\mathcal{U}}^k(s,x))\tilde{\mathcal{U}}_x^k(s,x)-D_2\nu^k(x,\tilde{\mathcal{U}}^k(t,x))\tilde{\mathcal{U}}_x^k(t,x)}\\
&\le\abs{D_2\nu^k({x,}(\tilde{\mathcal{U}}^k(t,x)))-D_2\nu^k({x,}(\tilde{\mathcal{U}}^k(s,x)))}\abs{\tilde{\mathcal{U}}_x^k(t,x)}\\& \quad +\abs{D_2\nu^k({x,}(\tilde{\mathcal{U}}^k(s,x))}\abs{\tilde{\mathcal{U}}_x^k(s,x)-\tilde{\mathcal{U}}_x^k(t,x)}\\
&\le \norma{D_{22}\nu^k}_{L^\infty(\R\times [0,1])} \abs{\tilde{\mathcal{U}}^k(s,x)-\tilde{\mathcal{U}}^k(t,x)}\abs{\tilde{\mathcal{U}}_x^k(t,x)}\\&\quad +\abs{D_2\nu^k({x,}(\tilde{\mathcal{U}}^k(s,x))}\abs{\tilde{\mathcal{U}}_x^k(s,x)-\tilde{\mathcal{U}}_x^k(t,x)}\\
&\le \norma{D_{22}\nu^k}_{L^\infty(\R\times [0,1])} \norma{\omega_{\eta}}_{L^{\infty}(\R)}
\norma{u^k(s,\dott)-u^k(t,\dott)}_{L^1(\R)}\abs{\tilde{\mathcal{U}}_x^k(t,x)}\\
&\quad+\abs{D_2\nu^k({x,}(\tilde{\mathcal{U}}^k(s,x))}\abs{\tilde{\mathcal{U}}_x^k(s,x)-\tilde{\mathcal{U}}_x^k(t,x)},
\end{split}
\end{align}
using \eqref{V_t:Lip1}. Substituting the above estimates in \eqref{IUx}, we get
\begin{align*}
I_{\mathcal{U}^{k}_x}
\leq 
I^1_{\mathcal{U}^k_x}+ \cdots +I^6_{\mathcal{U}^k_x},
\end{align*}
where $I^1_{\mathcal{U}^k_x} ,\ldots, I^6_{\mathcal{U}^k_x} $ satisfy the following estimates:
\begin{align*}
I^{1}_{\mathcal{U}^k_x}&=\abs{f^k}_{\lip(\R)}\int_{Q^2_T}\Theta_{\epsilon}(x-y)\abs{u^k(t,x)}\Theta_{{\epsilon}_0}(t-s)\\ & \qquad  \qquad \qquad \times \abs{D_1\nu^k(x,\tilde{\mathcal{V}}^k(s,x))-D_1\nu^k(y,\tilde{\mathcal{V}}^k(s,y))}\d{x}
\d{y}\d{t}\d{s}\\
&\le \abs{f^k}_{\lip(\R)}\norma{D_{12}\nu^k}_{L^\infty(\R\times [0,1])} \abs{\omega_{\eta}}_{\lip([0,\eta])}\\
&\qquad\times\int_{Q^2_T}\Theta_{\epsilon}(x-y)\abs{u^k(t,x)}\Theta_{{\epsilon}_0}(t-s)
\norma{v^k(s,\dott)}_{L^1(\R)}|y-x|\d{x}
\d{y}\d{t}\d{s}\\&\quad+\abs{f^k}_{\lip(\R)}\norma{D_{11}\nu^k}_{L^\infty(\R\times [0,1])}\int_{Q^2_T}\Theta_{\epsilon}(x-y)\abs{u^k(t,x)}\Theta_{{\epsilon}_0}(t-s)|y-x|\d{x}
\d{y}\d{t}\d{s}
\\& \le \epsilon  \abs{f^k}_{\lip(\R)} 
\abs{\omega_{\eta}}_{\lip([0,\eta])}
\norma{\boldsymbol{u}_0}_{(L^1(\R))^N}\norma{D_{12}\nu^k}_{L^\infty(\R\times [0,1])}\norma{v^k}_{L^1(\overline{Q}_T)}\\&\qquad+ 
\epsilon \abs{f^k}_{\lip(\R)}  
{\color{black}\norma{{u}^k}_{L^1(\overline{Q}_T)}}\norma{D_{11}\nu^k}_{L^\infty(\R\times [0,1])},
\end{align*}
\begin{align*}
I^{2}_{\mathcal{U}^k_x}&=\abs{f^k}_{\lip(\R)}\int_{Q^2_T}\abs{D_2\nu^k(y,\tilde{\mathcal{V}}^k(s,y))\tilde{\mathcal{V}}_y^k(s,y)-D_2\nu^k(x,\tilde{\mathcal{V}}^k(s,x))\tilde{\mathcal{V}}_x^k(s,x)}\\ &
 \qquad \qquad \qquad \times  \Theta_{\epsilon}(x-y)\abs{u^k(t,x)}\Theta_{{\epsilon}_0}(t-s) \d{x}
\d{y}\d{t}\d{s}\\&\le \abs{f^k}_{\lip(\R)}\norma{D_{22}\nu^k}_{L^\infty(\R\times [0,1])} \abs{\omega_{\eta}}_{\lip([0,\eta])} \\
& \qquad \quad \times \int_{Q^2_T}
\norma{v^k(s,\dott)}_{L^1(\R)}|y-x|\abs{\tilde{\mathcal{V}}_x^k(s,x)} \Theta_{\epsilon}(x-y)\abs{u^k(t,x)}\Theta_{{\epsilon}_0}(t-s) \d{x}
\d{y}\d{t}\d{s}\\
&\qquad+\abs{f^k}_{\lip(\R)}\norma{D_{21}\nu^k}_{L^\infty(\R\times [0,1])}\\
&\qquad \quad \times \int_{Q^2_T}\Theta_{\epsilon}(x-y)\abs{u^k(t,x)}\Theta_{{\epsilon}_0}(t-s)|y-x|\abs{\tilde{\mathcal{V}}_x^k(s,x)}\d{x}
\d{y}\d{t}\d{s}\\
&\qquad+\abs{f^k}_{\lip(\R)}\int_{Q^2_T}\Theta_{\epsilon}(x-y)\abs{u^k(t,x)}\Theta_{{\epsilon}_0}(t-s)\abs{D_2\nu^k({y,}(\tilde{\mathcal{V}}^k(s,y))}\\&
\qquad \qquad \qquad \qquad \times \abs{\tilde{\mathcal{V}}_y^k(s,y)-\tilde{\mathcal{V}}_x^k(s,x)}\d{x}
\d{y}\d{t}\d{s} \\
& \leq  \epsilon \abs{f^k}_{\lip(\R)} 
\norma{D_{22}\nu^k}_{L^\infty(\R\times [0,1])} \abs{\omega_{\eta}}_{\lip([0,\eta])}
\norma{v^k}_{L^1(\overline{Q}_T)} \norma{\tilde{\mathcal{V}}_x^k}_{L^{\infty}(\overline{Q}_T)}\norma{\boldsymbol{u}_0}_{(L^1(\R))^N}\\
&\qquad+ \epsilon \abs{f^k}_{\lip(\R)} \norma{u^k}_{L^1(\overline{Q}_T)}\norma{D_{21}\nu^k}_{L^\infty(\R\times [0,1])}  \norma{\tilde{\mathcal{V}}_x^k}_{L^{\infty}(\overline{Q}_T)}
 \\
&\qquad+ \epsilon T \abs{f^k}_{\lip(\R)} \norma{u^k}_{L^{\infty}(\overline{Q}_T)}\ \norma{D_2\nu^k}_{L^{\infty}(\R\times[0,1])}\abs{\tilde{\mathcal{V}}_x^k}_{L^\infty_t\BV_x},
\end{align*}
{\allowdisplaybreaks
\begin{align*}
I^3_{\mathcal{U}^k_x}&=\abs{f^k}_{\lip(\R)} \int_{Q^2_T}\abs{u^k(t,x)}
\abs{D_1\nu^k(x,\tilde{\mathcal{U}}^k(s,x))-D_1\nu^k(x,\tilde{\mathcal{V}}^k(s,x))} \\& \qquad \qquad  \qquad \times \Theta_{\epsilon}(x-y)\Theta_{{\epsilon}_0}(t-s) \d{x} \d{y}\d{t}\d{s}\\
& \leq \abs{f^k}_{\lip(\R)} \int_{Q^2_T}\abs{u^k(t,x)}
\norma{D_{12}\nu^k}_{L^\infty(\R\times [0,1])}  \abs{\tilde{\mathcal{U}}^k(s,x)-\tilde{\mathcal{V}}^k(s,x)}  \\& \qquad \qquad  \qquad \times \Theta_{\epsilon}(x-y)\Theta_{{\epsilon}_0}(t-s) \d{x} \d{y}\d{t}\d{s}
\\
&\le  \abs{f^k}_{\lip(\R)} \norma{\boldsymbol{u_0}}_{(L^{1}(\R))^N}\norma{\omega_{\eta}}_{L^{\infty}(\R)}\norma{D_{12}\nu^k}_{L^\infty(\R\times [0,1])} \\ & \qquad \qquad \qquad  \times \int_0^T
\norma{v^k(s,\dott)-u^k(s,\dott)}_{L^1(\R)}\d{s},
\end{align*}}
{\allowdisplaybreaks
\begin{align*} 
I^4_{\mathcal{U}^k_x}
&=\abs{f^k}_{\lip(\R)}\int_{Q^2_T} \abs{D_2\nu^k(x,\tilde{\mathcal{U}}^k(s,x))\tilde{\mathcal{U}}_x^k(s,x)-D_2\nu^k(x,\tilde{\mathcal{V}}^k(s,x))\tilde{\mathcal{V}}_x^k(s,x)}\\
&\qquad \qquad \qquad \times \Theta_{\epsilon}(x-y)\abs{u^k(t,x)}\Theta_{{\epsilon}_0}(t-s) \d{x}
\d{y}\d{t}\d{s}\\
&\le \abs{f^k}_{\lip(\R)}\norma{D_{22}\nu^k}_{L^\infty(\R\times [0,1])} \norma{\omega_{\eta}}_{L^{\infty}(\R)}\\ & \quad \qquad  \times \int_{Q^2_T}\Theta_{\epsilon}(x-y)\abs{u^k(t,x)}\Theta_{{\epsilon}_0}(t-s) 
\norma{v^k(s,\dott)-u^k(s,\dott)}_{L^1(\R)} \\ & \quad \qquad \qquad \times \abs{\tilde{\mathcal{V}}_x^k(s,x)} \d{x}
\d{y}\d{t}\d{s} \\&\qquad+\abs{f^k}_{\lip(\R)}\int_{Q^2_T}\Theta_{\epsilon}(x-y)\abs{u^k(t,x)}\Theta_{{\epsilon}_0}(t-s)\abs{D_2\nu^k({x,}(\tilde{\mathcal{U}}^k(s,x))} \\ & \qquad \qquad \qquad \qquad \times \abs{\tilde{\mathcal{U}}_x^k(s,x)-\tilde{\mathcal{V}}_x^k(s,x)}\d{x}
\d{y}\d{t}\d{s}\\ & \le \abs{f^k}_{\lip(\R)} \norma{\boldsymbol{u_0}}_{(L^1(\R))^N}\norma{D_{22}\nu^k}_{L^\infty(\R\times [0,1])} \norma{\omega_{\eta}}_{L^{\infty}(\R)} \norma{\tilde{\mathcal{V}}_x^k}_{L^{\infty}(\overline{Q}_T)} \\
& \qquad \times 
\int_0^T \norma{v^k(s,\dott)-u^k(s,\dott)}_{L^1(\R)} \d s \\&\qquad+\abs{f^k}_{\lip(\R)}\norma{u^k}_{L^{\infty}(\overline{Q}_T)}\norma{D_2\nu^k}_{L^{\infty}(\R \times [0,1])} \int_{Q_T}\abs{\tilde{\mathcal{U}}_x^k(s,x)-\tilde{\mathcal{V}}_x^k(s,x)}\d{x} \d{s}
\\ & \le \abs{f^k}_{\lip(\R)} \norma{\boldsymbol{u_0}}_{(L^1(\R))^N}\norma{D_{22}\nu^k}_{L^\infty(\R\times [0,1])} \norma{\omega_{\eta}}_{L^{\infty}(\R)} \norma{\tilde{\mathcal{V}}_x^k}_{L^{\infty}(\overline{Q}_T)} \\ & \qquad \qquad \times  \int_0^T \norma{v^k(s,\dott)-u^k(s,\dott)}_{L^1(\R)} \d s \\&\qquad+3\abs{f^k}_{\lip(\R)}\norma{u^k}_{L^{\infty}(\overline{Q}_T)}\norma{D_2\nu^k}_{L^{\infty}(\R \times [0,1])} \norma{\omega_{\eta}}_{L^{\infty}(\R)}\int_0^T \norma{v^k(s,\dott)-u^k(s,\dott)}_{L^1(\R)} \d s, 
\end{align*}}
\begin{align*}
I^5_{\mathcal{U}^k_x}
&=\abs{f^k}_{\lip(\R)}\int_{Q^2_T}\Theta_{\epsilon}(x-y)\abs{u^k(t,x)}\Theta_{{\epsilon}_0}(t-s)\\ & \qquad \qquad \qquad \times \abs{D_1\nu^k(x,\tilde{\mathcal{U}}^k(s,x))-D_1\nu^k(x,\tilde{\mathcal{U}}^k(t,x))}\d{x}
\d{y}\d{t}\d{s}\\
&\le\abs{f^k}_{\lip(\R)}\norma{D_{12}\nu^k}_{L^\infty(\R\times [0,1])}\norma{\omega_{\eta}}_{L^{\infty}(\R)}\\&\qquad \qquad \qquad \times \int_{Q^2_T}\Theta_{\epsilon}(x-y)\abs{u^k(t,x)}\Theta_{{\epsilon}_0}(t-s) \norma{u^k(s,\dott)-u^k(t,\dott)}_{L^1(\R)}\d{x}
\d{y}\d{t}\d{s}\\
& \leq  T\epsilon_0\abs{f^k}_{\lip(\R)} \norma{\boldsymbol{u}_0}_{(L^1(\R))^N} \norma{D_{12}\nu^k}_{L^\infty(\R\times [0,1])} \norma{\omega_{\eta}}_{L^{\infty}(\R)} \abs{u^k}_{\lip_tL^1_x},
\end{align*}
and
{\allowdisplaybreaks
\begin{align*}
I^6_{\mathcal{U}^k_x}
&=\abs{f^k}_{\lip(\R)}\int_{Q^2_T}\abs{u^k(t,x)}\abs{D_2\nu^k(x,\tilde{\mathcal{U}}^k(s,x))\tilde{\mathcal{U}}_x^k(s,x)-D_2\nu^k(x,\tilde{\mathcal{U}}^k(t,x))\tilde{\mathcal{U}}_x^k(t,x)}\\
& \qquad \qquad \qquad \times  
\Theta_{\epsilon}(x-y) \Theta_{{\epsilon}_0}(t-s) \d{x}
\d{y}\d{t}\d{s}\\
& \le \abs{f^k}_{\lip(\R)}\norma{D_{22}\nu^k}_{L^\infty(\R\times [0,1])} \norma{\omega_{\eta}}_{L^{\infty}(\R)} \\& \qquad \times \int_{Q^2_T}\Theta_{\epsilon}(x-y)\abs{u^k(t,x)}\Theta_{{\epsilon}_0}(t-s)
\norma{u^k(s,\dott)-u^k(t,\dott)}_{L^1(\R)}\abs{\tilde{\mathcal{U}}_x^k(t,x)}\d{x}
\d{y}\d{t}\d{s}\\
&\qquad+\abs{f^k}_{\lip(\R)}\int_{Q^2_T}\Theta_{\epsilon}(x-y)\abs{u^k(t,x)}\Theta_{{\epsilon}_0}(t-s)\abs{D_2\nu^k({x,}(\tilde{\mathcal{U}}^k(s,x))}\\
& \qquad \qquad \qquad \qquad \times \abs{\tilde{\mathcal{U}}_x^k(s,x)-\tilde{\mathcal{U}}_x^k(t,x)}
 \d{x}
\d{y}\d{t}\d{s}
\\ & \leq T\epsilon_0\abs{f^k}_{\lip(\R)}\norma{D_{22}\nu^k}_{L^\infty(\R\times [0,1])} \norma{\omega_{\eta}}_{L^{\infty}(\R)} \norma{\boldsymbol{u}_0}_{(L^1(\R))^N}
\abs{u^k}_{\lip_tL^1_x}\norma{\tilde{\mathcal{U}}_x^k}_{L^{\infty}(\overline{Q}_T)}\\
&\qquad+3\abs{f^k}_{\lip(\R)}\norma{D_2\nu^k}_{L^{\infty}(\overline{Q}_T)}\norma{\omega_{\eta}}_{L^{\infty}(\R)}\\&\qquad \quad \times\int_{Q_T}\int_{0}^T\Theta_{\epsilon}(x-y)\abs{u^k(t,x)}\Theta_{{\epsilon}_0}(t-s)\norma{u^k(s,\dott)-u^k(t,\dott)}_{L^1(\R)} \d{x}
\d{y}\d{t}\d{s},
\\ & \leq T\epsilon_0\abs{f^k}_{\lip(\R)}\norma{D_{22}\nu^k}_{L^\infty(\R\times [0,1])} \norma{\omega_{\eta}}_{L^{\infty}(\R)} \norma{\boldsymbol{u}_0}_{(L^1(\R))^N}
\abs{u^k}_{\lip_tL^1_x}\norma{\tilde{\mathcal{U}}_x^k}_{L^{\infty}(\overline{Q}_T)}\\
&\qquad+3\abs{f^k}_{\lip(\R)}\norma{D_2\nu^k}_{L^{\infty}(\overline{Q}_T)}\norma{\omega_{\eta}}_{L^{\infty}(\R)}\abs{u^k}_{\lip_tL^1_x}\\&\qquad\qquad\qquad\times\int_{0}^T\int_{0}^T\abs{u^k(t,x)}\Theta_{{\epsilon}_0}(t-s)|t-s| \d{x}
\d{t}\d{s},
\\ & \leq T\epsilon_0\abs{f^k}_{\lip(\R)}\norma{D_{22}\nu^k}_{L^\infty(\R\times [0,1])} \norma{\omega_{\eta}}_{L^{\infty}(\R)} \norma{\boldsymbol{u}_0}_{(L^1(\R))^N}
\abs{u^k}_{\lip_tL^1_x}\norma{\tilde{\mathcal{U}}_x^k}_{L^{\infty}(\overline{Q}_T)}\\
&\qquad+3T\epsilon_0\abs{f^k}_{\lip(\R)}\norma{u^k}_{L^{\infty}(\overline{Q}_T)}\norma{D_2\nu^k}_{L^{\infty}(\overline{Q}_T)}\norma{\omega_{\eta}}_{L^{\infty}(\R)}\abs{u^k}_{\lip_tL^1_x}.
\end{align*}}
The estimate \eqref{est:PP'}, follows by substituting the above estimates in \eqref{est:IUU_x}. Finally, we prove \eqref{est:I_S}.\\
\textbf{Proof of \eqref{est:I_S}}: Consider
\begin{align}\label{IS1}
    I^k_S
    &\le  \int_{Q^2_T}  \left|\mathcal{R}_{\boldsymbol{v}}^k(s,y)-\mathcal{R}_{\boldsymbol{u}}^k(t,x)\right| |\Phi(t,x,s,y)| \d x \d t  \d y \d s,
\end{align}
    where 
\begin{align*}
&\abs{{\color{black}\mathcal{R}}^k_{\boldsymbol{v}}(s,y)-{\color{black}\mathcal{R}}^k_{\boldsymbol{u}}(t,x)}\\
&\quad\leq  \abs{{\color{black}\mathcal{R}}^k_{\boldsymbol{v}}(s,y)-{\color{black}\mathcal{R}}^k_{\boldsymbol{v}}(s,x)}+\abs{{\color{black}\mathcal{R}}^k_{\boldsymbol{v}}(s,x)-{\color{black}\mathcal{R}}^k_{\boldsymbol{u}}(s,x)}+ \abs{{\color{black}\mathcal{R}}^k_{\boldsymbol{u}}(s,x)-{\color{black}\mathcal{R}}^k_{\boldsymbol{u}}(t,x)}\\ 
&\quad\le 2{\color{black}\abs{\boldsymbol{S}}_{(L^{\infty}(\R;\lip({[0,1]^4})))^N}}\sum_{j=k-1}^{k+1}\Big(\abs{v^{j}(s,y)-v^{j}(s,x)}+\abs{\mathcal{V}^{j}(s,y)-\mathcal{V}^{j}(s,x)}\\
    &\quad\qquad \qquad\qquad\qquad\qquad\qquad\quad+
  \abs{v^{j}(s,x)-u^{j}(s,x)}+\abs{\mathcal{V}^{j}(s,x)-\mathcal{U}^{j}(s,x)}\\
  &\quad\qquad\qquad \qquad\qquad\qquad\qquad\quad+
  \abs{u^{j}(s,x)-u^{j}(t,x)}+\abs{\mathcal{U}^{j}(s,x)-\mathcal{U}^{j}(t,x)}\Big).
\end{align*}
Now,
{\color{black}using the fact that $\boldsymbol{v}\in K$ and $\boldsymbol{u}$ is an entropy solution and properties of the mollifiers \eqref{mol}, we have for every $j\in\mathcal{N}$:
{\allowdisplaybreaks
\begin{align*}
\int_{Q_T^2} \abs{v^j(s,y)-v^j(s,x)} \Phi(t,x,s,y) \d t \d x \d s \d y &\leq \epsilon T \abs{v^j}_{L^{\infty}_tBV_x},\\
\int_{Q_T^2} \abs{v^j(s,x)-u^j(s,x)} \Phi(t,x,s,y) \d t \d x \d s \d y &\leq  T \norma{u^j(s,\dott)-v^j(s,\dott)}_{L^1(\R)},\\
\int_{Q_T^2} \abs{u^j(s,x)-u^j(t,x)} \Phi(t,x,s,y) \d t \d x \d s \d y & \leq  T \abs{u^j(s,\dott)-u^j(t,\dott)}_{\lip_tL^1_x}.\end{align*}}}
{\color{black}Using the fact that $\boldsymbol{u},\boldsymbol{v} \in (L^{\infty}_tL^1_x)^{N} \cap (L^{\infty}_t \BV_x)^{N}$ and assumption \ref{A3}, we have for every $j\in\mathcal{N},$
\begin{align*}
\abs{\mathcal{V}^j(s,y)- \mathcal{V}^j(s,x)} &\leq \abs{\boldsymbol{\nu}}_{(\lip(\R))^{N}} \int_{\R} \Big(\abs{v^j(s,{\color{black}y+z})-v^j(s,{x+z})}\\
&\qquad\qquad\qquad\qquad\qquad+\mathcal{C}_0\abs{a_0(x)-a_0(y)}\Big)\omega_{\eta}(z) \d z,\\[2mm]
\abs{\mathcal{V}^{j}(s,x)-\mathcal{U}^{j}(s,x)}&\leq \abs{\boldsymbol{\nu}}_{(\lip(\R))^{N}} \int_{\R} \abs{v^j(s,{x+z})-u^j(s,{x+z})}\omega_{\eta}(z) \d z,\\[2mm]
\abs{\mathcal{U}^{j}(s,x)-\mathcal{U}^{j}(t,x)}& \leq  \abs{\boldsymbol{\nu}}_{(\lip(\R))^{N}} \int_{\R} \abs{u^j(s,{x+z})-u^j(t,{x+z})}\omega_{\eta}(z) \d z.
\end{align*}}
Now, using Fubini's theorem and similar calculations as above, we have for every $j\in\mathcal{N}$,
\begin{align*}
\int_{Q_T^2} \abs{\mathcal{V}^j(s,y)-\mathcal{V}^j(s,x)} &\Phi(t,x,s,y) \d t \d x \d s \d y \\
&\qquad\leq \abs{\boldsymbol{\nu}}_{(\lip(\R))^{N}}  \epsilon T \abs{v^j}_{L^{\infty}_tBV_x}+\mathcal{C}_0 T \epsilon {\abs{a_{0}}_{\BV(\R)}},\\
\int_{Q_T^2} \abs{\mathcal{V}^j(s,x)-\mathcal{U}^j(s,x)}& \Phi(t,x,s,y) \d t \d x \d s \d y\\ 
&\qquad\leq  \abs{\boldsymbol{\nu}}_{(\lip(\R))^{N}}  T \norma{u^j(s,\dott)-v^j(s,\dott)}_{L^1(\R)},\\
\int_{Q_T^2} \abs{\mathcal{U}^j(s,x)-\mathcal{U}^j(t,x)}& \Phi(t,x,s,y) \d t \d x \d s \d y \\
&\qquad \leq  \abs{\boldsymbol{\nu}}_{(\lip(\R))^{N}}  T \abs{u^j(s,\dott)-u^j(t,\dott)}_{\lip_tL^1_x}.
\end{align*}}
The estimate \eqref{est:I_S}, follows by substituting the above estimates in \eqref{IS1}.
{\color{black}Finally, combining \eqref{est:I_T^k}--\eqref{est:I_S}, we obtain \eqref{est:kuz}, thereby completing the proof.}
\end{proof}
\begin{theorem}
[Uniqueness]\label{uniqueness}
Let $\boldsymbol{u},\boldsymbol{v}$ 
be the entropy solutions of the IVP for the system \eqref{PDE_NL_k}
  with initial data $\boldsymbol{u}_0,\boldsymbol{v}_0{\color{black}\in ((L^1 \cap \BV) (\R;[0,1]))^{N}}$, respectively. Then, the following holds:
 \begin{align*}
\norma{\boldsymbol{u}(T,\dott)-\boldsymbol{v}(T,\dott)}_{(L^1(\R))^{N}} &\leq \mathcal{C}_T \norma{\boldsymbol{u}_0-\boldsymbol{v}_0}_{(L^1(\R))^{N}}.
  \end{align*}
 In particular, if $\boldsymbol{u}_0=\boldsymbol{v}_0$, then $\boldsymbol{u}=\boldsymbol{v}$ a.e. in $\overline{Q}_T.$
\end{theorem}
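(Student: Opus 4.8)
The plan is to obtain the uniqueness estimate as a direct consequence of the Kuznetsov-type bound \eqref{est:kuz} in Lemma~\ref{lemma:kuz}, applied with the comparison function $\boldsymbol{v}$ taken to be itself an entropy solution. The first step is to verify that any entropy solution belongs to the class $K$. Indeed, by Definition~\ref{def:sol} an entropy solution takes values in $[0,1]$ and lies in $(\lip([0,T];L^1(\R))\cap L^{\infty}([0,T];\BV(\R)))^{N}$, which immediately gives finiteness of $\norma{\boldsymbol{v}}_{(L^{\infty}(\overline{Q}_T))^{N}}+|\boldsymbol{v}|_{(L^{\infty}([0,T];\BV(\R)))^{N}}$. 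The mass-conservation requirement built into $K$ follows from the telescoping structure of the source: since $S^0=S^{N}=0$, summing $R^k=S^{k-1}-S^k$ over $k\in\mathcal{N}$ gives $\sum_{k\in\mathcal{N}}R^k\equiv 0$, so the total mass $\sum_{k\in\mathcal{N}}\int_\R v^k(t,x)\,\d x$ is independent of $t$; as each $v^k\ge 0$, this quantity coincides with $\norma{\boldsymbol{v}(t,\dott)}_{(L^1(\R))^{N}}$, yielding exactly the constancy demanded in $K$. Hence $\boldsymbol{v}\in K$ and Lemma~\ref{lemma:kuz} is applicable.

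Next I would exploit that $\boldsymbol{v}$ is not merely an element of $K$ but an entropy solution. By the very argument used inside the proof of Lemma~\ref{lemma:kuz} to establish $\Lambda^k_{\epsilon,\epsilon_0}(u^k,v^k)\ge 0$, the entropy inequality for $v^k$ (tested against the nonnegative $\Phi(\dott,\dott,s,y)$ with the constant $\alpha=u^k(s,y)$ and then integrated in $(s,y)$) yields $\Lambda^k_{\epsilon,\epsilon_0}(v^k,u^k)\ge 0$ for every $k\in\mathcal{N}$. Consequently $-\Lambda^k_{\epsilon,\epsilon_0}(v^k,u^k)\le 0$, so each summand $\gamma(v^k,\epsilon_0)-\Lambda^k_{\epsilon,\epsilon_0}(v^k,u^k)$ in \eqref{est:kuz} is bounded above by $\gamma(v^k,\epsilon_0)$ alone. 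For the latter, the time-Lipschitz regularity $\boldsymbol{v}\in(\lip_tL^1_x)^{N}$ gives $\gamma(v^k,\epsilon_0)\le |v^k|_{\lip_tL^1_x}\,\epsilon_0$, which tends to $0$ as $\epsilon_0\to0$.

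Combining these observations, \eqref{est:kuz} reduces to
\[
\norma{\boldsymbol{u}(T,\dott)-\boldsymbol{v}(T,\dott)}_{(L^1(\R))^{N}}\le \mathcal{C}_T\Big(\norma{\boldsymbol{u}_0-\boldsymbol{v}_0}_{(L^1(\R))^{N}}+\sum_{k\in\mathcal{N}}|v^k|_{\lip_tL^1_x}\,\epsilon_0+N(\epsilon+\epsilon_0)\Big).
\]
Since $\mathcal{C}_T$ is independent of $\epsilon$ and $\epsilon_0$, letting $\epsilon\to0$ and $\epsilon_0\to0$ removes every term except the initial-data difference and produces the claimed stability bound. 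The conclusion $\boldsymbol{u}=\boldsymbol{v}$ a.e.\ in $\overline{Q}_T$ when $\boldsymbol{u}_0=\boldsymbol{v}_0$ then follows because the estimate holds with $T$ replaced by any intermediate time $t\in[0,T]$, forcing $\norma{\boldsymbol{u}(t,\dott)-\boldsymbol{v}(t,\dott)}_{(L^1(\R))^{N}}=0$ for a.e.\ $t$.

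The main obstacle I anticipate is the careful justification of $\boldsymbol{v}\in K$, and in particular the conservation of total mass: one must confirm that an entropy solution is a weak solution (so that the balance law governing $\tfrac{\d}{\d t}\int_\R v^k\,\d x$ is available) and that the boundary flux contributions at infinity vanish, which relies on the $L^1\cap\BV$ control of the solution provided by the stability bounds (cf.\ Lemma~\ref{lem:stability}). Once membership in $K$ is secured, the remaining steps—the nonnegativity of the relative entropy functional $\Lambda^k_{\epsilon,\epsilon_0}(v^k,u^k)$ and the limit passage $\epsilon,\epsilon_0\to0$—are routine, the entire analytic content being contained in Lemma~\ref{lemma:kuz}.
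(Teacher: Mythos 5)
Your proposal is correct and follows essentially the same route as the paper: apply the Kuznetsov-type bound \eqref{est:kuz} with $\boldsymbol{v}$ an entropy solution, use the entropy inequality for $v^k$ to get $\Lambda^k_{\epsilon,\epsilon_0}(v^k,u^k)\ge 0$, note $\gamma(v^k,\epsilon_0)\to 0$ by time-Lipschitz regularity, and send $\epsilon,\epsilon_0\to 0$. Your additional verification that $\boldsymbol{v}\in K$ (in particular the conservation of total mass via the telescoping $\sum_{k\in\mathcal{N}}R^k\equiv 0$ with $S^0=S^N=0$) is a correct justification of a hypothesis the paper's one-line proof leaves implicit.
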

\begin{proof}
Since $\boldsymbol{v}$ is an entropy solution of \eqref{PDE_NL_k} with the initial data $\boldsymbol{v}_0$, for each $k\in\mathcal{N}$, $\Lambda^k_{\epsilon,\epsilon_0}(v^k,u^k) \geq 0$.
Further, in \eqref{est:kuz}, \color{black}$\lim_{\epsilon_0 \rightarrow 0 }\gamma(v^k,\epsilon_0)=0$ and $\mathcal{C}_T$ is independent of $\epsilon,\epsilon_0.$ Now, the result follows by sending $\epsilon,\epsilon_0\rightarrow 0$ in \eqref{est:kuz}.
\end{proof}

The following section  is dedicated to the finite volume schemes approximating the  entropy solutions for the IVP \eqref{PDE_NL_k},\eqref{eq:u11A}, and establishes  their existence.
\section{The numerical scheme and its convergence}\label{exis}
Choose a space step $\Delta x,$ such that $\eta=N_{\eta}\Delta x,$ for {\color{black}some integer} $N_{\eta}.$
For $\Delta t>0$, and $\lambda:=\Delta t/\Delta x$, consider the equidistant spatial grid points $x_i:=i\Delta x$ for $i\in\Z$ and the temporal grid points $t^n:=n\Delta t$ 
for integers in $\mathcal{N}_T:=\{0, \ldots, N_T\}$, such that $T=N_T \D t$.  Let $\chi_i(x)$ denote the indicator function of $C_i:=[x_{i-1/2}, x_{i+1/2})$, where $x_{i+1/2}=\frac12(x_i+x_{i+1})$ and let
$\chi^{k,n}(t)$ denote the indicator function of $C^{n}:=[t^n,t^{n+1})$. {\color{black} Finally, write} $C_i^{k,n}:=C^{k,n}\times C_i$. 
{\color{black}In the rest of the article, for any vector $\{s_i\}_{i\in\Z},$ we let $\Delta_-s_i:=s_i-s_{i-1}$, $i\in \Z$, denote the backward difference operator.}\\
\textcolor{black}{Consider initial data $\boldsymbol{u}_0\in ((\BV\cap L^1)(\R ;[0,1]))^{N}$}. For each $k \in \mathcal{N}$, we approximate the initial data according to:
\begin{align*}
&u^{k}_{\Delta}(0,x):=\sum_{i\in\Z}\chi_i(x)u^{k,0}_i, \quad x\in \R, \text{ where }  u^{k,0}_i=\frac1{\Delta x}\int_{C_i}u^k_0(x) \d x,\quad i\in\Z.
\end{align*}
Throughout this section,  {\color{black}since $\boldsymbol{u}_0\in ((\BV\cap L^1)(\R ;[0,1]))^{N},$ we have} $0\leq u^{k,0}_i \leq 1$ for each $(i,k) \in \Z \times \mathcal{N}$.
Furthermore, we define an approximate solution  to ~\eqref{PDE_NL_k} that  is piecewise constant by $\boldsymbol{u}_{\Delta} =
\left(u_{\Delta}^{1}, \ldots, u_{\Delta}^{N}\right)$ where
\begin{align}\label{apx}
  u_{\Delta}^{k} (t,x) =  u^{k,n}_{i}, \quad (t,x)\in C^{k,n}_i,\,  (i,k,n) \in \Z \times \mathcal{N} \times \mathcal{N}_T,
\end{align} 
through the following marching formula:
\begin{align}\label{MF}
u^{k,n+1}_i&=u^{k,n}_i-\lambda \left[ \mathcal{F}_{i+1/2}^{k,n}(u^{k,n}_i,u^{k,n}_{i+1}) - \mathcal{F}_{i-1/2}^{k,n}(u^{k,n}_{i-1},u^{k,n}_{i})\right] + \D t \mathcal{R}_i^{k,n}, 
\end{align}
{\color{black} where $\mathcal{R}$ and $\mathcal{F}$ will be defined in the following.} Further, we write \textcolor{black}{$u^{k,n}=u^k_{\Delta}(t^n, \dott)$ and $\boldsymbol{u}^{n}=\boldsymbol{u}_{\Delta}(t^n,\dott)$}, and for a fixed $\beta\in\left(0,2/3\right)$, we define {\color{black}a Lax–Friedrichs type scheme by}
\begin{align}
\nonumber \mathcal{F}^{k,n}_{i+1/2}(u^{k,n}_i,u^{k,n}_{i+1})&:= 
  \frac{1}{2}\nu^k({x_{i+1/2}},c^{k,n}_{i+1/2})\left( f^k(u^{k,n}_i)
    +
    f^k(u^{k,n}_{i+1})\right)
  -
  \frac{\beta}{2\, \lambda}{\color{black}\Delta_{+}u^{k,n}_{i}},
  \end{align}
  and
  \begin{align}
\label{num:R}
\begin{split}
\mathcal{R}_i^{k,n}&= R^{k}({x_i}, {\color{black}u^{k-1,n}_i,u^{k,n}_{i},u^{k+1,n}_i},c^{k,n}_i,c^{k-1,n}_i,c^{k+1,n}_i)\\
&:=S^{k-1}({x_i,}u^{k-1,n}_i,u^{k,n}_i,c_i^{k-1,n},c_i^{k,n})-S^{k}({x_i,}u^{k,n}_i,u^{k+1,n}_i,c_i^{k,n},c_i^{k+1,n}),
\end{split}
\end{align}
 \textcolor{black}{where $S^k$ is defined by \eqref{eq:Sk}}. Further,
\begin{align}
\label{num:c_ip1/2}
\begin{split}c_{i+1/2}^{k,n}&:=\sum\limits_{p=0}^{N_{\eta}-1}\zeta_{p+1/2}u_{i+p+1}^{k,n}={\color{black}\sum\limits_{p=0}^{N_{\eta}-1}\zeta_{p-i+1/2}u_{p+1}^{k,n}},\\ \zeta_{p+1/2}&:=\int_{p\Delta x}^{(p+1)\Delta x}\omega_{\eta}(y)\d y,\qquad p\in\mathcal{N}_{\eta}:=\{0,1,\dots,N_{\eta}-1\}.
\end{split}
\end{align}
Throughout, $\Delta t$ is chosen in order to satisfy
the CFL condition \begin{align}\label{CFL}
 \lambda \le \frac{\min(1, 4-6{\beta},6{\beta},1-2\Delta t\abs{\boldsymbol{S}}_{(L^{\infty}(\R;\lip({[0,1]^4})))^N})}{1+6{\abs{f^k}_{\lip(\R)}}{\norma{\nu^k}_{L^{\infty}(\R\times [0,1])}}}.\end{align}
 
{\color{black}Note that \eqref{MF} can be written in the following form {\color{black} for a suitable $\mathcal{H}^k$}:
\begin{align*}
u_i^{k,n+1}&:=\mathcal{H}^k({\color{black}x_{i-1/2},x_i,x_{i+1/2}, c^{k,n}_{i-1/2},c^{k,n}_{i+1/2},c^{k,n}_{i},c^{k-1,n}_{i},c^{k+1,n}_{i}},\\
&\qquad{\color{black}u_{i-1}^{k,n},u_{i}^{k,n},u_{i+1}^{k,n},u_i^{k-1,n},u_i^{k+1,n}})\\
&=\mathcal{H}_{C}^k({x_{i-1/2},x_{i+1/2},}c^{k,n}_{i-1/2},c^{k,n}_{i+1/2},u_{i-1}^{k,n},u_{i}^{k,n},u_{i+1}^{k,n}) \\&\quad +\mathcal{H}_S^k({x_{i}},c^{k,n}_{i},c^{k-1,n}_{i},c^{k+1,n}_{i},{\color{black}u_{i}^{k,n},u_i^{k-1,n},u_i^{k+1,n}}),
\end{align*}
where  $\mathcal{H}_S^k({x_{i}},c^{k,n}_{i},c^{k-1,n}_{i},c^{k+1,n}_{i}, u_{i}^{k,n},u_i^{k-1,n},u_i^{k+1,n})= \D t \mathcal{R}_i^{k,n}$ comes from the source term, and $\mathcal{H}_{C}^k$ comes from the (remaining) convective terms of \eqref{MF}. We now show that the numerical approximations obtained by the algorithm \eqref{MF} satisfy various expected physical properties that implies that  it  converges to the unique entropy solution of 
the IVP \eqref{PDE_NL_k},\eqref{eq:u11A}. 
\begin{remark}\label{rem:monH_c}\normalfont 
Note that $\mathcal{H}_C^k$ is same as the numerical flux used for nonlocal conservation law in \cite{ACG2015,ACT2015} in one dimension and for an
arbitrary fixed sequence $\{x_{i+1/2}, c^{k,n}_{i+1/2}\}_{i\in \Z}$, it is monotone in last three arguments under the CFL condition \eqref{CFL}, see\cite[eq.~(4.17)]{ACT2015}.
\end{remark}
\begin{lemma}[Monotonicity] \label{lem:monotone}For any $(i,k,n) \in \Z \times \mathcal{N} \times \mathcal{N}_T$, and for given sequences
$\{x_i, x_{i\pm 1/2}, c_{i\pm 1/2}^{k,n}\}_{i\in \Z}$, $\{c_{i\pm 1/2}^{k\pm 1,n}\}_{i\in \Z}$, $\mathcal{H}^k$ is increasing in the last five arguments, under the CFL condition \eqref{CFL}.
\end{lemma}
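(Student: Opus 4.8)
The plan is to exploit the additive splitting $\mathcal{H}^k=\mathcal{H}^k_C+\mathcal{H}^k_S$ with $\mathcal{H}^k_S=\Delta t\,\mathcal{R}^{k,n}_i$ recorded just before the statement, and to verify monotonicity one state variable at a time, using that the five arguments enter the two pieces in an essentially decoupled way: the spatial neighbours $u^{k,n}_{i\pm1}$ appear only in the convective part $\mathcal{H}^k_C$, the lateral (lane) neighbours $u^{k-1,n}_i,u^{k+1,n}_i$ appear only in the source $\mathcal{R}^{k,n}_i$, and the diagonal value $u^{k,n}_i$ appears in both. I would treat these three groups separately and then collect the resulting conditions into the single CFL requirement \eqref{CFL}, working throughout with state values in $[0,1]$ (which is the inductive hypothesis at the current time level, guaranteed by the projection of $\boldsymbol{u}_0$).

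For the spatial neighbours I would invoke Remark \ref{rem:monH_c}: since $\mathcal{H}^k_C$ coincides with the one-dimensional nonlocal Lax--Friedrichs flux of \cite{ACT2015,ACG2015} with the frozen interface data $\{x_{i\pm1/2},c^{k,n}_{i\pm1/2}\}$, differentiating gives $\partial_{u^{k,n}_{i-1}}\mathcal{H}^k_C=\tfrac{\beta}{2}+\tfrac{\lambda}{2}\nu^k(x_{i-1/2},c^{k,n}_{i-1/2})\,(f^k)'$ and $\partial_{u^{k,n}_{i+1}}\mathcal{H}^k_C=\tfrac{\beta}{2}-\tfrac{\lambda}{2}\nu^k(x_{i+1/2},c^{k,n}_{i+1/2})\,(f^k)'$, both nonnegative as soon as $\beta\ge\lambda\,\abs{f^k}_{\lip(\R)}\norma{\nu^k}_{L^\infty(\R\times[0,1])}$, which is contained in the $6\beta$ branch of \eqref{CFL}. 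As the source is independent of $u^{k,n}_{i\pm1}$, monotonicity in those two arguments is then immediate.

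For the lateral neighbours I would use the explicit sign structure of $S^k$ in \eqref{eq:Sk}. Writing $\Gamma^k:=g^{k+1}(b)\nu^{k+1}(x,B)-g^k(a)\nu^k(x,A)$ so that $S^k=(\Gamma^k)^+a-(\Gamma^k)^-b$, and using \ref{A1} (so $(g^k)'\le0$), the nonnegativity of $\nu^k$, and $0\le a,b\le1$, a short case distinction on the sign of $\Gamma^k$ gives $\partial_a S^{k-1}\ge0$ and $\partial_b S^k\le0$ wherever the derivatives exist. Since $u^{k-1,n}_i$ enters $\mathcal{R}^{k,n}_i$ only through $+S^{k-1}$ in its first slot and $u^{k+1,n}_i$ only through $-S^k$ in its second slot, this yields $\partial_{u^{k-1,n}_i}\mathcal{R}^{k,n}_i=\partial_aS^{k-1}\ge0$ and $\partial_{u^{k+1,n}_i}\mathcal{R}^{k,n}_i=-\partial_bS^k\ge0$; multiplying by $\Delta t>0$ gives monotonicity of $\mathcal{H}^k$ in both lane neighbours.

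The delicate point, and the step I expect to be the main obstacle, is the diagonal argument $u^{k,n}_i$, where a convective contribution and a possibly negative source contribution must be balanced. Differentiating $\mathcal{H}^k_C$ gives $\partial_{u^{k,n}_i}\mathcal{H}^k_C=(1-\beta)-\tfrac{\lambda}{2}(f^k)'(u^{k,n}_i)\big(\nu^k(x_{i+1/2},c^{k,n}_{i+1/2})-\nu^k(x_{i-1/2},c^{k,n}_{i-1/2})\big)$, which I would bound below by $1-\beta-\lambda\abs{f^k}_{\lip(\R)}\norma{\nu^k}_{L^\infty(\R\times[0,1])}$, nonnegative under the $1$ and $4-6\beta$ branches of \eqref{CFL}. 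For the source, $u^{k,n}_i$ sits in the second slot of $S^{k-1}$ and the first slot of $S^k$, so by the previous paragraph $\partial_{u^{k,n}_i}\mathcal{R}^{k,n}_i=\partial_bS^{k-1}-\partial_aS^k\le0$, with $\big|\partial_{u^{k,n}_i}\mathcal{R}^{k,n}_i\big|\le2\,\abs{\boldsymbol{S}}_{(L^{\infty}(\R;\lip({[0,1]^4})))^N}$ by \eqref{est:Lip_S}. Collecting these, $\partial_{u^{k,n}_i}\mathcal{H}^k\ge(1-\beta)-\lambda\abs{f^k}_{\lip(\R)}\norma{\nu^k}_{L^\infty(\R\times[0,1])}-2\Delta t\,\abs{\boldsymbol{S}}_{(L^{\infty}(\R;\lip({[0,1]^4})))^N}$, and the nonnegativity of this quantity is precisely what the combined choice of the $1$, $4-6\beta$ and $1-2\Delta t\,\abs{\boldsymbol{S}}$ branches in \eqref{CFL} is tailored to secure. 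Making this bookkeeping precise, together with handling the non-smooth points of $\Gamma^k$ and of the Lax--Friedrichs flux on the same footing (arguing on difference quotients, in the spirit of \cite[Lemma A2.1]{BP1998}), is the technical heart of the proof.
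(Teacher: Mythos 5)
Your proposal is correct and follows essentially the same route as the paper: monotonicity in $u^{k,n}_{i\pm1}$ via the nonlocal Lax--Friedrichs structure of $\mathcal{H}^k_C$ (Remark \ref{rem:monH_c}), monotonicity in $u^{k\pm1,n}_i$ via the sign/case analysis of $S^k=(\Gamma^k)^+a-(\Gamma^k)^-b$ yielding $\partial_a S^k\ge 0$ and $\partial_b S^k\le 0$ on $[0,1]$, and the diagonal argument balancing the convective derivative against the $2\Delta t\,\abs{\boldsymbol{S}}_{(L^{\infty}(\R;\lip({[0,1]^4})))^N}$ source bound under \eqref{CFL}. If anything, your diagonal computation is slightly more careful than the paper's, which omits the $-\beta$ contribution of the numerical viscosity in $\partial_{u^{k,n}_i}\mathcal{H}^k_C$ (writing $1-\tfrac{\lambda}{2}D_1f^k(u^{k,n}_i)\bigl(\nu^k(x_{i+1/2},c^{k,n}_{i+1/2})-\nu^k(x_{i-1/2},c^{k,n}_{i-1/2})\bigr)$ where $1-\beta-\cdots$ should appear), and you rightly flag the nonsmooth points of the $(\dott)^\pm$ and Lax--Friedrichs fluxes, which the paper glosses over.
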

\begin{proof}
Fix $k \in \mathcal{N}$.  We first prove that $S^k$ is increasing in the second variable and decreasing in the third variable.
Using \eqref{eq:Sk},
\begin{align*}
S^k(x,a,b,A,B)&:= \max(\mathcal{M}^k(x,a,b,A,B),0)a+\min(\mathcal{M}^k(x,a,b,A,B),0)b,
\end{align*}
where $\mathcal{M}^k(x,a,b,A,B):=g^{k+1}(b)\nu^{k+1}({x, }B)-g^k(a)\nu^k({x, }A).$ Now, note that
\begin{align}
\mathcal{M}^k_a=-D_1g^k(a)\nu^k({x, }A)\ge0, \quad\mathcal{M}^k_b=D_1g^{k+1}(b)\nu^{k+1}({x, }B)\le 0,\end{align} owing to \ref{A1} and non-increasingness of $g^k$. Hence, if $\mathcal{M}^k\ge0,$\begin{align}
S^k_a=-aD_1g^k(a)\nu^k({x, }A)\ge0, \quad S^k_b=D_1g^{k+1}(b)\nu^{k+1}({x, }B)\le 0,
\end{align} 
since $a,b\ge 0,$ being the variables denoting $u^k_n.$ This proves that if $\mathcal{M}^k\ge0,$ $S^k$ is increasing in the second variable and decreasing in the third variable. Similar proof follows when $\mathcal{M}^k\le0.$ We now prove that $\mathcal{H}^k$ is increasing in  $u_{i}^{k\pm 1,n}$. Using the fact that $S^k$ is increasing in second and third arguments in \eqref{num:R}, we have $\mathcal{R}_i^{k,n}$ is increasing  in $u_{i\pm 1}^{k\pm 1,n}$. Hence using \eqref{MF}, we have $\mathcal{H}_S^k$ and hence $\mathcal{H}^k$ is increasing in $u_{i}^{k\pm 1,n}.$ We next prove that $\mathcal{H}^k$ is increasing in $u_{i\pm 1}^{k,n}$. Using Remark ~\ref{rem:monH_c},
 $\mathcal{H}^k_{C}$ is increasing in $u_{i\pm 1}^{k,n}$ under the CFL condition \eqref{CFL}. Thus, $\mathcal{H}^k$ is increasing in $u_{i\pm 1}^{k,n}$.  We finally prove that $\mathcal{H}^k$ is increasing in $u_{i}^{k,n}$. To that end we have
\begin{align*}
\begin{split}
\partial_{u_{i}^{k,n}} \mathcal{H}^k &=\partial_{u_{i}^{k,n}}  \mathcal{H}^k_{C}+\Delta t  \partial_{u_{i}^{k,n}}  S^{k-1}-\Delta t \partial_{u_{i}^{k,n}}  S^{k}\\
     &=1-D_1f^k(u^{k,n}_i)\frac{\lambda }{2}\left(\nu^k(x_{i+1/2}, c^{k,n}_{i+1/2})-\nu^k(x_{i-1/2},c^{k,n}_{i-1/2})\right)-\Delta t \partial_{u_{i}^{k,n}} ( S^{k}- S^{k-1})\\
     &\ge 1-\lambda \norma{f^k}_{\lip(\R)}{\norma{\nu^k}_{L^{\infty}(\R\times [0,1])}}-2\Delta t\abs{\boldsymbol{S}}_{(L^{\infty}(\R;\lip({[0,1]^4})))^N})
     \ge 0,
 \end{split}   
 \end{align*}
 under the CFL condition \eqref{CFL}, finishing the proof.
\end{proof}}
\begin{lemma}\label{lem:stability} Fix an initial data ${\boldsymbol{u}_0} \in ((L^1  \cap \BV)
  (\R;[0,1))^{N}$. Let {\color{black} the CFL condition }\eqref{CFL} hold. Then, the approximate solution
  ${\boldsymbol{u}_\Delta}$ defined by the {marching formula}~\eqref{MF} satisfies:
\begin{enumerate}[(a)]
\item \label{IRP}(Invariant region principle) If $0\leq {u}^0_{\Delta}(t,x) \leq 1, (t,x,k)\in \overline{Q}_T\times\mathcal{N}$, then $0\leq {u}^k_{\Delta}(t,x) \leq 1, \,\,\,{\color{black}\forall}(t,x,k)\in \overline{Q}_T\times\mathcal{N}.$
  \item \label{pos}(Conservation) {\color{black} For $n \in \mathcal{N}_T$,}
\begin{align*}
 \sum\limits_{k\in \mathcal{N}} \sum\limits_{i\in \Z}u_i^{k,n} =  \sum\limits_{k\in \mathcal{N}}\sum\limits_{i\in \Z}u_i^{k,0}, n\in \mathcal{N}_T.
\end{align*}
    \item (Total variation bound) {\color{black} For $n \in \mathcal{N}_T$,}
  \begin{align}\label{TV_u}
\sum_{k\in\mathcal{N}}\TV(u^{k,n+1})\le    \exp({\color{black}\mathcal{C}_{TV}} t^n) \sum_{k\in\mathcal{N}} \TV(u^{k,0})
     + \exp({\color{black}\mathcal{C}_{TV}} t^n)-1, 
\end{align}
where {\color{black}${\color{black}\mathcal{C}_{TV}}$ depends on $N,\abs{\boldsymbol{S}}_{(L^{\infty}(\R;\lip({[0,1]^4})))^N},\abs{\omega_{\eta}}_{\BV(\R)},\norma{\boldsymbol{u}_0}_{(L^1(\R))^N},\abs{f^k}_{\lip(\R)},$\\$\norma{D_1 \nu^k}_{L^{\infty}(\R \times[0,1])}$,
$\norma{D_2 \nu^k}_{L^{\infty}(\R \times[0,1])},\norma{D_{11} \nu^k}_{L^{\infty}(\R \times [0,1])}, \norma{D_{12} \nu^k}_{L^{\infty}(\R \times [0,1])} ,\\\norma{D_{2} \nu^k}_{L^{\infty}(\R \times [0,1])},\norma{D_{2 1} \nu^k}_{L^{\infty}(\R \times [0,1])}$ and $\norma{D_{2 2} \nu^k}_{L^{\infty}(\R \times [0,1])}.$}
\item (Lipschitz continuity in time estimate)\label{lem:TE}
For $(k,m,n)\in \mathcal{N}\times\mathcal{N}_T\times\mathcal{N}_T$, 
\begin{eqnarray*}
\Delta x \sum \limits_{i\in \mathbb{Z}} \abs{u_i^{k,m}-u_i^{k,n}} \leq {\color{black}\mathcal{C}_{L}} \Delta t (m-n), \quad m>n, 
\end{eqnarray*}
where {\color{black}${\color{black}\mathcal{C}_{L}}$ depends on $\norma{\boldsymbol{u}_0}_{(L^1(\R))N}, \norma{D_1 \nu^k}_{L^{\infty}(\R \times[0,1])},\norma{D_2 \nu^k}_{L^{\infty}(\R \times[0,1])}$,\\$\abs{\boldsymbol{S}}_{(L^{\infty}(\R;\lip({[0,1]^4})))^N},\norma{\boldsymbol{\nu}}_{(L^{\infty}(\R \times [0,1]))^N}$ and $ \TV(\boldsymbol{u}_{\D}(t^n,\dott)).$}
\item (Discrete entropy inequality)
For any $\alpha\in\R$ and for any $(i,k,n)\in\Z\times \mathcal{N}\times \mathcal{N}_T$,
\begin{align} 
\begin{split}
&\abs{u_i^{k,n+1}-\alpha}-\abs{u_i^{k,n}- \alpha}+\lambda\big(\mathcal{G}^{k,n}_{i+1/2}(u_i^{k,n} ,u_{i+1}^{k,n},\alpha )-\mathcal{G}^{k,n}_{i-1/2}(u_{i-1}^{k,n} ,u_i^{k,n},\alpha)\big)\\
 &\qquad \qquad \qquad +\lambda\sgn(u_i^{k,n+1}-\alpha) f^k(\alpha)(\nu^k(x_{i+\frac{1}{2}},c_{i+\frac{1}{2}}^{k,n})-\nu^k(x_{i-\frac{1}{2}},c_{i-\frac{1}{2}}^{k,n})) \\
 &\qquad \qquad\qquad\qquad\qquad\qquad\qquad\qquad\qquad\leq  \D t\sgn(u_i^{k,n+1}-\alpha) \mathcal{R}_i^{k,n}, \label{apx:de}
 \end{split}
 \end{align}
where $\mathcal{G}^{k,n}_{i+1/2}(a,b,\alpha)=\mathcal{F}_{i+1/2}^{k,n}(a \vee \alpha,b \vee \alpha)-\mathcal{F}_{i+1/2}^{k,n}(a \wedge \alpha,b \wedge \alpha)$ for all $(i,n)\in\Z\times \mathcal{N}_T$,  with  $a \vee b:= \max(a,b),$ and $ a \wedge b:=\min(a,b)$.
 \item \label{CON}(Convergence)
The numerical approximations $\boldsymbol{u}_{\Delta}$  converge to the entropy solution $ \boldsymbol{u} \in (C([0,T];L^1(\R)) \cap L^{\infty}([0,T];\BV(\R)))^{N}$ as $\Delta x, \Delta t \to 0.$ 
\end{enumerate}
\end{lemma}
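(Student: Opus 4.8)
The plan is to establish all six assertions by induction on the time level $n$, with every part leaning on the monotonicity of $\mathcal{H}^k$ in its five density arguments (Lemma \ref{lem:monotone}) under the CFL condition \eqref{CFL}. For the invariant region principle, I would first verify that the two constant states $0$ and $1$ are fixed by the marching formula \eqref{MF}: at the all-zero stencil, assumption \ref{A1} gives $f^k(0)=0$ while the product structure of $S^k$ in \eqref{eq:Sk} forces $\mathcal{R}_i^{k,n}=0$, so $\mathcal{H}^k(0,\ldots,0)=0$; at the all-one stencil, $g^k(1)=0$ gives $f^k(1)=0$ and $\mathcal{M}^k\equiv 0$, hence again $S^{k-1}=S^k=0$ and $\mathcal{H}^k(1,\ldots,1)=1$. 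Assuming $0\le u_i^{k,n}\le 1$ for all lanes and using that $\mathcal{H}^k$ is nondecreasing in each density slot, the value $u_i^{k,n+1}=\mathcal{H}^k(\cdots)$ is sandwiched between $\mathcal{H}^k(0,\ldots,0)=0$ and $\mathcal{H}^k(1,\ldots,1)=1$, closing the induction simultaneously over all $k\in\mathcal{N}$. Conservation follows by summing \eqref{MF} over $i\in\Z$ and $k\in\mathcal{N}$: the convective flux differences telescope to zero by the $L^1$-decay of the data, while $\sum_{k\in\mathcal{N}}\mathcal{R}_i^{k,n}=\sum_{k}(S^{k-1}-S^k)=S^0-S^N=0$ by \eqref{eq:Rk} and the boundary convention $S^0=S^N=0$.

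The total variation bound is the technical core. I would apply the backward difference $\Delta_-$ to \eqref{MF} and, since $\mathcal{H}^k_C$ is monotone, express the convective contribution to $\Delta_- u_i^{k,n+1}$ as a convex combination of $\Delta_- u_{i-1}^{k,n},\Delta_- u_i^{k,n},\Delta_- u_{i+1}^{k,n}$ plus error terms recording the spatial variation of the numerical flux through the kernel averages $c_{i\pm1/2}^{k,n}$ and through the explicit $x$-dependence of $\nu^k$ (controlled by \ref{A3} via $|a_0|_{\BV(\R)}$), following the one-dimensional treatment of \cite{ACT2015,ACG2015}. The genuinely new contributions are the variation of the convolution averages $c^{k,n}$ — which for the less regular one-sided kernel I would bound through the weights $\zeta_{p+1/2}$ of \eqref{num:c_ip1/2} and $|\omega_\eta|_{\BV(\R)}$, as in the estimates around \eqref{VxBV}--\eqref{V_xBV} — and the source increment $\Delta t\,\Delta_-\mathcal{R}_i^{k,n}$, whose cross-lane coupling is handled using the Lipschitz bound \eqref{est:Lip_S} together with the variation of $c^{k,n}$ just obtained. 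Summing over $i$, taking absolute values, and summing over $k$ yields the one-step recursion
\[
\sum_{k\in\mathcal{N}}\TV(u^{k,n+1})\le (1+\mathcal{C}_{TV}\Delta t)\sum_{k\in\mathcal{N}}\TV(u^{k,n})+\mathcal{C}_{TV}\Delta t,
\]
and a discrete Gronwall iteration produces the exponential bound \eqref{TV_u}. The Lipschitz-in-time estimate then follows directly from \eqref{MF} by bounding $\Delta x\sum_i|u_i^{k,n+1}-u_i^{k,n}|\le \Delta t\sum_i|\mathcal{F}_{i+1/2}^{k,n}-\mathcal{F}_{i-1/2}^{k,n}|+\Delta x\,\Delta t\sum_i|\mathcal{R}_i^{k,n}|$, where the first sum is controlled by the just-proved $\TV$ bound times $\abs{f^k}_{\lip(\R)}\norma{\nu^k}_{L^\infty(\R\times[0,1])}$ and the second by \eqref{est:Lip_S} and the $L^1$ bound, and then telescoping over the $m-n$ steps.

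For the discrete entropy inequality I would use the Crandall--Majda device: applying $\mathcal{H}^k$ to the componentwise maxima and minima of the density stencil against the constant $\alpha$ and invoking monotonicity gives
\[
\mathcal{H}^k(\ldots u\vee\alpha\ldots)\ge \mathcal{H}^k(\ldots u\ldots)\vee \mathcal{H}^k(\alpha,\ldots,\alpha),\qquad \mathcal{H}^k(\ldots u\wedge\alpha\ldots)\le \mathcal{H}^k(\ldots u\ldots)\wedge \mathcal{H}^k(\alpha,\ldots,\alpha).
\]
Subtracting these and using $|a-\alpha|=a\vee\alpha-a\wedge\alpha$, the numerical entropy flux $\mathcal{G}_{i+1/2}^{k,n}$ assembles from the $\mathcal{F}(\cdot\vee\alpha)-\mathcal{F}(\cdot\wedge\alpha)$ combination. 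The key subtlety is that $\alpha$ is not a steady state here because of the explicit spatial dependence of $\nu^k$ and the source term; evaluating $\mathcal{H}^k$ at the constant state $\alpha$ leaves exactly the defect $-\lambda f^k(\alpha)\big(\nu^k(x_{i+1/2},c_{i+1/2}^{k,n})-\nu^k(x_{i-1/2},c_{i-1/2}^{k,n})\big)+\Delta t\,\mathcal{R}_i^{k,n}$, which after multiplication by $\sgn(u_i^{k,n+1}-\alpha)$ gives precisely the two correction terms appearing in \eqref{apx:de}.

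Finally, for convergence, the uniform $L^\infty$ bound from the invariant region principle, the spatial $\BV$ bound \eqref{TV_u}, and the time-Lipschitz estimate together furnish, via the Kolmogorov/Helly compactness theorem, a subsequence of $\boldsymbol{u}_\Delta$ converging in $(C([0,T];L^1_{loc}(\R)))^N$ to a limit $\boldsymbol{u}$ in the required regularity class, while the discrete convolutions $c^{k,n}$ converge to $u^k*\omega_\eta$ by \eqref{num:c_ip1/2} and \ref{A4}. Multiplying \eqref{apx:de} by a nonnegative test function and passing to the limit recovers the entropy inequality \eqref{kruz2}, so $\boldsymbol{u}$ is an entropy solution; uniqueness (Theorem \ref{uniqueness}) then upgrades subsequential convergence to convergence of the whole family. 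I expect the total variation bound to be the main obstacle, precisely because the one-sided kernel is only $\BV\cap W^{2,\infty}$ rather than globally smooth: the convolution derivative $\tilde{\mathcal{U}}^k_x$ is merely $\BV$ in space (cf.\ \eqref{VxBV}--\eqref{U_xT}), so the flux variation cannot be absorbed by Lipschitz bounds on $c^{k,n}$ and must be tracked through $\abs{\omega_\eta}_{\BV(\R)}$ and the boundary contribution $\omega_\eta(0)$, all while keeping the cross-lane source coupling under control inside the Gronwall loop.
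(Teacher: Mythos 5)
Your proposal tracks the paper's own proof almost step for step: (a) the fixed states $0$ and $1$ of the marching operator plus monotonicity of $\mathcal{H}^k$ (Lemma \ref{lem:monotone}); (b) telescoping of the fluxes together with the cross-lane cancellation $\sum_{k\in\mathcal{N}}\mathcal{R}^{k,n}_i=S^0-S^{N}=0$; (c) the incremental/TVB analysis for space-dependent fluxes of \cite{ACG2015}, combined with the kernel-weight estimates specific to the one-sided monotone kernel ($\abs{\Delta_-c^{k,n}_{i+1/2}}\le\zeta_{1/2}\le\omega_\eta(0)\Delta x$ and the second-difference bound \eqref{double}), the Lipschitz bound \eqref{est:Lip_S} for the source increment, and a discrete Gronwall iteration; (d) the same flux-plus-source $L^1$ splitting; and (f) $L^\infty$/$\BV$/time-Lipschitz compactness, a Lax--Wendroff passage in \eqref{apx:de}, and uniqueness to upgrade subsequential to full convergence. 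These parts are all correct and take the paper's route.

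The one genuine wrinkle is in (e). The lattice inequality you display, $\mathcal{H}^k(\ldots u\vee\alpha\ldots)\ge\mathcal{H}^k(\ldots u\ldots)\vee\mathcal{H}^k(\alpha,\ldots,\alpha)$, applied to the \emph{full} operator $\mathcal{H}^k$ (source included) produces source terms evaluated at the truncated states $u\vee\alpha$, $u\wedge\alpha$ and at the all-$\alpha$ stencil --- in particular $\mathcal{H}^k$ at the constant state $\alpha$ carries $\Delta t\,R^k(x_i,\alpha,\alpha,\alpha,c^{k,n}_i,\ldots)$, \emph{not} $\Delta t\,\mathcal{R}_i^{k,n}$ --- so your claim that evaluating at $\alpha$ "leaves exactly the defect $-\lambda f^k(\alpha)\Delta_-\nu^k+\Delta t\,\mathcal{R}_i^{k,n}$" does not follow from that display, and this route does not yield \eqref{apx:de} as stated. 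The paper's fix is to apply monotonicity only to the convective part $\mathcal{H}^k_C$, carry $\Delta t\,\mathcal{R}_i^{k,n}$ untouched through both one-sided bounds \eqref{Mon1}--\eqref{Mon2}, write the $\alpha$-state comparisons \eqref{Mon3}--\eqref{Mon4} for $\mathcal{H}^k_C$ alone, and then combine via the conditional inequality $a\le b+c,\ -a\le b-c\implies\abs{a}\le b+\sgn(a)c$ with $c=-\lambda\Delta_-(\mathcal{F}^{k,n}_{i+1/2}(\alpha,\alpha))+\Delta t\,\mathcal{R}_i^{k,n}$. Retaining the factor $\sgn(u_i^{k,n+1}-\alpha)$ on \emph{both} correction terms is exactly what the Kuznetsov comparison of Sections \ref{uni} and \ref{sec:error} requires, so this repair, though short, is not optional. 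A cosmetic remark on (c): the explicit $x$-dependence of the flux enters the discrete TV estimate through $\norma{D_1\nu^k}_{L^\infty(\R\times[0,1])}$, $\norma{D_{11}\nu^k}_{L^\infty(\R\times[0,1])}$, etc., via the mean value theorem; the assumption \ref{A3} and $\abs{a_0}_{\BV(\R)}$ are used only in the continuous-level estimates, not here.
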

 \begin{proof}
\begin{enumerate}[(a)]
\item {\color{black} Since $f^k(1)=f^k(0)=0$, for any given sequences
$\{x_{i\pm 1/2},x_{i},c_{i\pm 1/2}^{k,n},c_{i\pm 1/2}^{k\pm 1,n}\}_{i\in \Z}$, we have
\begin{align*}
\mathcal{H}^k(\dott,\dott,\dott,\dott,\dott,\dott,\dott,\dott,0,0,0,0,0,)&=0,
\mathcal{H}^k(\dott,\dott,\dott,\dott,\dott,\dott,\dott,\dott,1,1,1,1,1,)&=1.
\end{align*}
Now, the result follows from the monotonicity of $\mathcal{H}^k$ in last five arguments (cf.~Lemma~\ref{lem:monotone}).}
\item {\color{black}This follows from the definition  of the scheme \eqref{MF} and positivity of $u^{k,n}$ proved in (a).}
\item 
For each $k\in\mathcal{N}$, writing \eqref{MF} in the incremental form and using the TVB property for the space dependent fluxes (cf.~\cite[Lemma~2.6]{ACG2015}), we get:

\begin{align} \label{HKCTV}
\begin{split}
   &\sum_{i\in \Z} \big|\mathcal{H}_{C}^k({x_{i+1/2},x_{i+3/2},}{\color{black}c^{k,n}_{i+1/2},c^{k,n}_{i+3/2},u_{i}^{k,n},u_{i+1}^{k,n},u_{i+2}^{k,n}})\\&\qquad -\mathcal{H}_{C}^k({x_{i-1/2},x_{i+1/2},}{\color{black}c^{k,n}_{i-1/2},c^{k,n}_{i+1/2},u_{i-1}^{k,n},u_{i}^{k,n},u_{i+1}^{k,n}})\big|
   \\ &\quad = {\color{black}\Big|u^{k,n}_{i+1}-\lambda(\mathcal{F}_{i+3/2}^{k,n}(u^{k,n}_{i+1},u^{k,n}_{i}) - \mathcal{F}_{i+1/2}^{k,n}(u^{k,n}_{i},u^{k,n}_{i+1}))}-\\&\qquad {\color{black}-\big(u^{k,n}_i-\lambda( \mathcal{F}_{i+1/2}^{k,n}(u^{k,n}_i,u^{k,n}_{i+1}) - \mathcal{F}_{i-1/2}^{k,n}(u^{k,n}_{i-1},u^{k,n}_{i}))\big) \Big|}
   \\ &\quad 
   \leq \TV(u^{k,n})+\sum_{i\in \Z} \abs{A^{k,n}_i}+\sum_{i\in \Z} \abs{B^{k,n}_i},
   \end{split}
   \end{align}
   where
    \begin{align*}\nonumber A^{k,n}_i&=\nonumber  
      - \lambda(f^k(u^{k,n}_{i+1})- f^k(u^{k,n}_{i}))\left(
\nu^k(x_{i+3/2},c^{k,n}_{{i+3/2}})-
      \nu^k(x_{i+1/2},c^{k,n}_{{i+1/2}})
      \right),\\
     B^{k,n}_i&=-\lambda  f^k(u^{k,n}_{i})\left(
\nu^k(x_{i+3/2},c^{k,n}_{{i+3/2}})-2
        \nu^k(x_{i+1/2},c^{k,n}_{{i+1/2}})+
       \nu^k(x_{i-1/2},c^{k,n}_{{i-1/2}})
      \right).
    \end{align*}
We make some remarks about $c^{k,n},$ specific to non-smoothness of $\omega_{\eta}$, following the arguments of \cite[Thm.~3.2]{FKG2018}).  {\color{black}Since $ u_i^{k,n}\in[0,1],$}  $\omega_{\eta}$ is decreasing on $[0,\eta],$ we have $\Delta_{-}{\zeta_{p+1/2}}\le 0 $
implying that, \begin{align*}
   {\color{black}-\Delta_{-}c^{k,n}_{{i+1/2}}}
    &=
\left(\zeta_{1/2}u^{{\color{black}k},n}_{i}+\sum\limits_{p=1}^{N_{\eta}-1/2}\Delta_{-}{\zeta_{p+1/2}}u^{k,n}_{p+i}- \zeta_{N_{\eta}-1}u^{k,n}_{N_{\eta}+i}\right)\le 
\zeta_{1/2}u^{{\color{black}k},n}_{i}\le 
\zeta_{1/2}
    \end{align*}
and
     \begin{align*}
   {\color{black}-\Delta_{-}c^{k,n}_{{i+1/2}}}&\ge 
\left(\zeta_{1/2}u^{{\color{black}k},n}_{i}+\sum\limits_{p=1}^{N_{\eta}-1/2}\Delta_{-}{\zeta_{p+1/2}}- \zeta_{N_{\eta}-1}\right)=\zeta_{1/2}u^{{\color{black}k},n}_{i}-\zeta_{1/2}\\
& \ge 
\zeta_{1/2}(u_i^{{\color{black}k},n}-1)\ge -\zeta_{1/2}.
    \end{align*}
Finally, we have,
\begin{align}\label{C}
  \abs{\Delta_{-}c^{{\color{black}k},n}_{{i+1/2}}}
&\le 
\zeta_{1/2}\le 
\omega_{\eta}(0)\Delta x.
    \end{align} 
    Further since,
\begin{align*}
  c^{k,n}_{{i-1/2}} - 2c^{k,n}_{{i+1/2}}+c^{k,n}_{{i+3/2}}
 &=-\zeta_{1/2}\Delta_+u_i^{k,n}-\sum\limits_{p=1}^{N_{\eta}-1}\Delta_{-}{\zeta_{p+1/2}}\Delta_+u_{i+p}^{k,n}\\
& \quad +\zeta_{N_{\eta}-1/2}\Delta_+u_{i+N_{\eta}}^{k,n},
\end{align*} 
we infer,
\begin{align}
\sum_i\abs{c^{k,n}_{{i-1/2}} - 2c^{k,n}_{{i+1/2}}+c^{k,n}_{{i+3/2}}}\nonumber
              & \le \sum_i\abs{\Delta_+u_i^{k,n}}\left(\zeta_{1/2} -\sum\limits_{p=1}^{N_{\eta}-1}\Delta_{-}{\zeta_{p+1/2}}+\zeta_{N_{\eta}-1/2}\right)\\
&=2\sum_i\abs{\Delta_+u_i^{k,n}}\zeta_{1/2}\label{double}\le2\sum_i\abs{\Delta_+u_i^{k,n}}\omega_{\eta}(0)\Delta x .
\end{align} 
To estimate the term $|A_i^{k,n}|$, we first consider 
\begin{align}\label{NuD}
\begin{split}
    &\abs{
\nu^k(x_{i+3/2},c^{k,n}_{{i+3/2}})-\nu^k(x_{i+1/2},c^{k,n}_{{i+1/2}})
   } \\
      &\qquad \quad \leq  \abs{\nu^k(x_{i+3/2},c^{k,n}_{{i+3/2}})-\nu^k(x_{i+1/2},c^{k,n}_{{i+3/2}})}\\
      &\qquad\qquad+ \abs{\nu^k(x_{i+1/2},c^{k,n}_{{i+3/2}})-\nu^k(x_{i+1/2},c^{k,n}_{{i+1/2}})}\\
      &\qquad\quad\le \abs{\Delta x D_1\nu^k (\overline{x}_{i+1}, c^{k,n}_{i+3/2})}+
   \abs{D_2 \nu^k (x_{i+1/2}, \overline{c}^{k,n}_{i+1})(c^{k,n}_{i+3/2}-c^{k,n}_{i+1/2})}\\
      &\qquad\quad \leq \norma{D_1 \nu^k}_{L^{\infty}(\R \times[0,1])} \D x + \norma{D_2 \nu^k}_{L^{\infty}(\R \times[0,1])} \omega_{\eta}(0)\Delta x,
\end{split}
\end{align}
  for suitable $\overline{x}_{i} \in (x_{i-1/2},
    x_{i+1/2})$ and $\overline{c}^{k,n}_{i}\in
  I(c^{k,n}_{i-1/2},c^{k,n}_{i+1/2})$. 
{\color{black}Thus,}
 \begin{align} 
 \begin{split}\label{B_i}
    \sum_{i\in \Z} {\abs{A_i^{k,n}}} &\leq   {\color{black}\mathcal{C}_{21}}\TV(u^{{\color{black}k},n}) \D t,
    \end{split}
\end{align}
where ${\color{black}\mathcal{C}_{21}}=\abs{f^{\color{black}k}}_{\lip(\R)}\max\{\norma{D_1 \nu^k}_{L^{\infty}(\R \times[0,1])}, \norma{D_2 \nu^k}_{L^{\infty}(\R \times[0,1])}, {\color{black}\omega_{\eta}(0)}\}.$

Now, to estimate the term $\abs{B^{k,n}_i}$, we first note that
  \begin{align} \begin{split} 
    &
      \nu^{k}(x_{i+3/2},c^{k,n}_{i+3/2})  - 2 \nu^{k}(x_{i+1/2},c^{k,n}_{i+1/2})  + \nu^{k}(x_{i-1/2},c^{k,n}_{i-1/2})
 \\ 
    &=(
      \nu^{k}(x_{i+3/2},c^{k,n}_{i+3/2})  -  \nu^{k}(x_{i+1/2},c^{k,n}_{i+1/2})) - 
   ( \nu^{k}(x_{i+1/2},c^{k,n}_{i+1/2})-\nu^{k}(x_{i-1/2},c^{k,n}_{i-1/2})
 )\\ 
    & =   D_1\nu^k(\overline{x}_{i+1},c^{k,n}_{i+3/2}) \D x + D_2\nu^k(x_{i+1/2},\overline{c}^{k,n}_{i+1})  (c^{k,n}_{i+3/2}-c^{k,n}_{i+1/2}) \\ 
&\quad -D_1\nu^k(\overline{x}_{i},c^{k,n}_{i+1/2}) \D x - D_2\nu^k(x_{i-1/2},\overline{c}^{k,n}_{i})  (c^{k,n}_{i+1/2}-c^{k,n}_{i-1/2})\\ 
&= (D_1\nu^k(\overline{x}_{i+1},c^{k,n}_{i+3/2})- D_1\nu^k(\overline{x}_{i},c^{k,n}_{i+1/2})) \D x\\
&\quad+ D_2\nu^k(x_{i+1/2},\overline{c}^{k,n}_{i+1})  (c^{k,n}_{i+3/2}-c^{k,n}_{i+1/2})  - D_2\nu^k(x_{i-1/2},\overline{c}^{k,n}_{i})  (c^{k,n}_{i+1/2}-c^{k,n}_{i-1/2})\\ \label{C_{1,i}+C_{2,i}}
&=B_{1,i}^{k,n}+B_{2,i}^{k,n}.
\end{split}
\end{align}
{\color{black}Further, the} terms $B_{1,i}^{k,n},B_{2,i}^{k,n}$ can be estimated by invoking the mean-value theorem 
once again:
 \begin{align}
\begin{split}
 \label{C_{1,i}}
    \abs{B_{1,i}^{k,n}}&=  |D_1\nu^k(\overline{x}_{i+1},c^{k,n}_{i+3/2})-D_1\nu^k(\overline{x}_{i},c^{k,n}_{i+1/2})|\D x\\ 
    & \leq \norma{D_{11}\nu^k}_{L^{\infty}(\R\times [0,1])}\D x^2+ \norma{D_{12}\nu^k}_{L^{\infty}(\R\times [0,1])}w_{\eta}(0) \D x^2,\\
\abs{B_{2,i}^{k,n}}&=| D_2\nu^k(x_{i+1/2},\overline{c}^{k,n}_{i+1})  (c^{k,n}_{i+3/2}-2c^{k,n}_{i+1/2}-c^{k,n}_{i-1/2})  \\& \quad + ( D_2\nu^k(x_{i+1/2},\overline{c}^{k,n}_{i+1}) -D_2\nu^k(x_{i-1/2},\overline{c}^{k,n}_{i}) ) (c^{k,n}_{i+1/2}-c^{k,n}_{i-1/2})| \\
 & \leq \norma{D_2 \nu^k}_{L^{\infty}(\R \times[0,1])}\abs{c^{k,n}_{i+3/2}-2c^{k,n}_{i+1/2}-c^{k,n}_{i-1/2}} \\&\quad + \norma{D_{21}\nu^k}_{L^{\infty}(\R \times[0,1])} \D x \abs{c^{k,n}_{i+1/2}-c^{k,n}_{i-1/2}}\\& \quad + \norma{D_{22}\nu^k}_{L^{\infty}(\R \times[0,1])} \abs{c^{k,n}_{i+1/2}-c^{k,n}_{i-1/2}}\abs{\overline{c}^{k,n}_{i}-\overline{c}^{k,n}_{i+1}}.
 \end{split}
\end{align} 
Since $\overline{c}^{k,n}_{i}\in I(c^{k,n}_{i-1/2},{c}^{k,n}_{i+1/2})$, $\abs{\overline{c}^{k,n}_{i+1}-\overline{c}^{k,n}_{i}}\le \abs{c^{{\color{black}k},n}_{{i+3/2}}-c^{{\color{black}k},n}_{{i-1/2}}}\le 2\omega_{\eta}(0)\Delta x$,
invoking the estimates \eqref{C_{1,i}+C_{2,i}}--\eqref{C_{1,i}} to get:
\begin{align}\begin{split}\label{C_i}
 \sum_{i\in \Z}\abs{C_i^{k,n}}&= \lambda 
 \sum\limits_{i\in \Z}\abs{f^k(u_i^{k,n})} \abs{B_{1,i}^{k,n}+B_{2,i}^{k,n}}\leq \lambda \abs{f^k}_{\lip(\R)} \sum\limits_{i\in \Z}  \abs{u_i^{k,n}} \left(\abs{B_{1,i}^{k,n}}+\abs{B_{2,i}^{k,n}} \right) 
 \\  &\leq  \bigg( \norma{D_{11} \nu^k}_{L^{\infty}(\R \times [0,1])} \norma{\boldsymbol{u}_0}_{{\color{black}(L^1(\R))^N}}+ \norma{D_{12} \nu^k}_{L^{\infty}(\R \times [0,1])} \omega_{\eta}(0) \norma{\boldsymbol{u}_0}_{{\color{black}(L^1(\R))^N}}  \\ & \quad + 2 \norma{D_{2} \nu^k}_{L^{\infty}(\R \times [0,1])} \TV(u^{{\color{black}k},n})   + \norma{D_{2 1} \nu^k}_{L^{\infty}(\R \times [0,1])} \norma{\boldsymbol{u}_0}_{{\color{black}(L^1(\R))^N}} \omega_{\eta}(0) \\  & \quad + \norma{D_{2 2} \nu^k}_{L^{\infty}(\R \times [0,1])} \norma{\boldsymbol{u}_0}_{{\color{black}(L^1(\R))^N}} (\omega_{\eta}(0))^2  \bigg)\abs{f^k}_{\lip(\R)} \D t
 \\
    &:= {\color{black}\mathcal{C}_{22}} \TV(u^{{\color{black}k},n}) \D t + {\color{black}\mathcal{C}_{23}} \D t.
    \end{split}
 \end{align}
 Substituting the estimates \eqref{B_i} and \eqref{C_i} in \eqref{HKCTV} we get the total variation bound:
 \begin{align}\label{TVC}
 \begin{split}
     &{\color{black}\sum_{i\in \Z} \big|\Delta_+\mathcal{H}_{C}^k({x_{i-1/2},x_{i+1/2},}c^{k,n}_{i-1/2},c^{k,n}_{i+1/2},u_{i-1}^{k,n},u_{i}^{k,n},u_{i+1}^{k,n})\big| }\\&\qquad{\color{black}\leq (1+{\color{black}\mathcal{C}_{24}} \D t) \TV(u^{k,n}) + {\color{black}\mathcal{C}_{24}} \D t},
 \end{split}
 \end{align}
 where ${\color{black}\mathcal{C}_{24}}={\color{black}\mathcal{C}_{21}}+{\color{black}\mathcal{C}_{22}}+{\color{black}\mathcal{C}_{23}}.$ 
 {\color{black}Using \eqref{num:R} and 
 the Lipschitz continuity of $\boldsymbol{S}$, cf.~\eqref{est:Lip_S}, we have
\begin{align}\label{RTV}
\begin{split}
    \abs{{\Delta_+\mathcal{R}_{i}^{k,n}}}&\leq 2\abs{\boldsymbol{S}}_{(L^{\infty}(\R;\lip({[0,1]^4})))^N} \Big( \abs{\Delta_+u_{i}^{k-1,n}} + \abs{\Delta_+u_{i}^{k,n}}+ \abs{\Delta_+u_{i}^{k+1,n}}\\ &\qquad \qquad  \qquad +\abs{\Delta_+c_{i}^{k-1,n}}+\abs{\Delta_+c_{i}^{k,n}}+ \abs{\Delta_+c_{i}^{k+1,n}}\Big),
\end{split}
\end{align}
where for every $(i,k,n)\in\Z\times\mathcal{N}\times\mathcal{N}_T$, $c^{k,n}_{{i}}=\frac{1}{2}(c^{k,n}_{{i+1/2}}+c^{k,n}_{{i+3/2}})$ and
{\allowdisplaybreaks {\color{black}
 \begin{align}  
 \begin{split}\label{TV_c}
  \sum_{i\in\Z} \abs{\Delta_+c^{k,n}_{{i}}}&=\sum_{i\in\Z}\frac{1}{2}\abs{(c^{k,n}_{{i+1/2}}+c^{k,n}_{{i+3/2}}) - (c^{k,n}_{{i-1/2}}+c^{k,n}_{{i+1/2}})}=\frac{1}{2}  \sum_{i\in\Z}\abs{c^{k,n}_{{i+3/2}} - c^{k,n}_{{i-1/2}}} \\ 
&=\frac{1}{2}\sum_{i\in\Z}\abs{\sum\limits_{p=0}^{N_{\eta}-1}\zeta_{p+1/2}(u_{i+p+2}^{k,n}-u_{i+p}^{k,n})}\\
&\le \frac{1}{2}\sum\limits_{p=0}^{N_{\eta}-1}\sum_{i\in\Z}\left(\abs{\Delta_+u^{k,n}_{i+p+1}}+\abs{\Delta_+u^{k,n}_{i+p}}\right)\abs{\zeta_{p+1/2}}\\ &\leq  \norma{\omega_{\eta}}_{L^1(\R)}\TV(u^{k,n})=\TV(u^{k,n}).\end{split} 
    \end{align}	}
}
 Now, using \eqref{TVC}--\eqref{TV_c},
\begin{align}\label{TVC1}
\begin{split}
\TV(u^{k,n+1})
&\leq (1+{\color{black}\mathcal{C}_{24}} \D t)  \TV(u^{k,n}) + {\color{black}\mathcal{C}_{24}} \D t \\
&\quad + 4\abs{\boldsymbol{S}}_{(L^{\infty}(\R;\lip({[0,1]^4})))^N}\D t \bigg(  \TV(u^{k,n}) + \TV(u^{k-1,n})+ \TV(u^{k+1,n})\bigg) \\
&\leq  (1+{\color{black}\mathcal{C}_{24}} \D t) \TV(u^{k,n}) + 4\abs{\boldsymbol{S}}_{(L^{\infty}(\R;\lip({[0,1]^4})))^N}\D t \sum_{k\in\mathcal{N}} \TV(u^{k,n})\\&\quad + {\color{black}\mathcal{C}_{24}} \D t.
\end{split}\end{align}
Consequently,
\begin{align*}
\sum_{k\in \mathcal{N}} \TV(u^{k,n+1}) & \leq (1+\mathcal{C}_{25} \D t) \sum_{k\in \mathcal{N}}\TV(u^{k,n}) +\mathcal{C}_{26}\D t,
\end{align*}
where $
\mathcal{C}_{25}={\color{black}\mathcal{C}_{24}}+4N\abs{\boldsymbol{S}}_{(L^{\infty}(\R;\lip({[0,1]^4})))^N}$ and  $
\mathcal{C}_{26}=N{\color{black}\mathcal{C}_{24}}.$
Setting ${\color{black}\mathcal{C}_{TV}}=\mathcal{C}_{25}+\mathcal{C}_{26}$, and applying mathematical induction, the result follows.
\item Consider 
\begin{align*}
\sum\limits_{i\in \Z}\abs{u_i^{k,n+1}-u_i^{k,n}} \leq \lambda \sum\limits_{i\in \Z} \abs{ \mathcal{F}_{i+1/2}^{k,n}(u^{k,n}_i,u^{k,n}_{i+1}) - \mathcal{F}_{i-1/2}^{k,n}(u^{k,n}_{i-1},u^{k,n}_{i})} + \D t \sum\limits_{i\in \Z} \abs{\mathcal{R}_i^{k,n}},
\end{align*}
where
\begin{align*}
    \abs{ \mathcal{F}_{i+1/2}^{k,n}(u^{k,n}_i,u^{k,n}_{i+1}) - \mathcal{F}_{i-1/2}^{k,n}(u^{k,n}_{i-1},u^{k,n}_{i})} & \leq \abs{\nu^k(x_{i+1/2},c_{i+1/2}^{k,n})} \left(\abs{\Delta_{-}u^{k,n}_{i}}+ \abs{\Delta_{+}u^{k,n}_{i}} \right) \\
    &\quad + 
    \abs{
\nu^k(x_{i+1/2},c^{k,n}_{{i+1/2}})-\nu^k(x_{i-1/2},c^{k,n}_{{i-1/2}})
   }\\
   &\qquad\times\left(\abs{u^{k,n}_{i}} +\abs{u^{k,n}_{i+1}}\right).
\end{align*}

Using the total variation bounds on $\boldsymbol{u}_{\Delta}$ established in $(c)$, \eqref{C}, 
 and \eqref{NuD}, we have
\begin{align*}
    &\sum\limits_{i\in \Z}\abs{ \mathcal{F}_{i+1/2}^{k,n}(u^{k,n}_i,u^{k,n}_{i+1}) - \mathcal{F}_{i-1/2}^{k,n}(u^{k,n}_{i-1},u^{k,n}_{i})} \\&\quad\leq 2\norma{\boldsymbol{\nu}}_{(L^{\infty}(\R \times [0,1]))^N} \TV(\boldsymbol{u}_{\D}(t^n,\dott))\\
    &\qquad+ 2(\norma{D_1 \nu^k}_{L^{\infty}(\R \times[0,1])}  + \norma{D_2 \nu^k}_{L^{\infty}(\R \times[0,1])} \omega_{\eta}(0))\norma{\boldsymbol{u}_0}_{(L^1(\R))N}.
\end{align*}
Further, since $R^k$ is Lipschitz and $R^k(\boldsymbol{0})=0$, we have
\begin{align*}
   &  \abs{\mathcal{R}_i^{k,n}}\\
   &\quad\leq  2 \abs{\boldsymbol{S}}_{(L^{\infty}(\R;\lip({[0,1]^4})))^N}\left( \abs{u_i^{k,n}}+ \abs{u_i^{k-1,n}} + \abs{u_i^{k+1,n}}+ \abs{c_i^{k,n}}+ \abs{c_i^{k-1,n}} + \abs{c_i^{k+1,n}}\right).
\end{align*}
Consequently, using the definition of \eqref{num:c_ip1/2}, we have 
\begin{align*}
    \D t \sum_{i\in\Z} \abs{\mathcal{R}_i^{k,n}} \leq  12 \lambda \abs{\boldsymbol{S}}_{(L^{\infty}(\R;\lip({[0,1]^4})))^N}\norma{\boldsymbol{u}_0}_{(L^1(\R))N}  ,
\end{align*}
   giving the required estimate,   with 
  \begin{align*}
     {\color{black}\mathcal{C}_{L}}&=\norma{\boldsymbol{u}_0}_{(L^1(\R))N} \Big(2(\norma{D_1 \nu^k}_{L^{\infty}(\R \times[0,1])}  + \norma{D_2 \nu^k}_{L^{\infty}(\R \times[0,1])} \omega_{\eta}(0))\\
     &\qquad\qquad\qquad+12 \lambda \abs{\boldsymbol{S}}_{(L^{\infty}(\R;\lip({[0,1]^4})))^N}\Big)\\
&\quad+2\norma{\boldsymbol{\nu}}_{(L^{\infty}(\R \times [0,1]))^N} \TV(\boldsymbol{u}_{\D}(t^n,\dott)).
   \end{align*}
\item 
 Using Remark~\ref{rem:monH_c}, since $\mathcal{H}^k_C$ is increasing in its last three arguments, we have  
\begin{align}  \nonumber
u_{i}^{k,n+1} &= \mathcal{H}_{C}^k(x_{i-1/2},x_{i+1/2},c^{k,n}_{i-1/2},c^{k,n}_{i+1/2},u_{i-1}^{k,n},u_{i}^{k,n},u_{i+1}^{k,n}) +\Delta t \mathcal{R}_i^{k,n}  \\ \nonumber 
&\le \mathcal{H}_{C}^k(x_{i-1/2},x_{i+1/2},c^{k,n}_{i-1/2},c^{k,n}_{i+1/2},u_{i-1}^{k,n}\vee \alpha ,u_{i}^{k,n}\vee \alpha ,u_{i+1}^{k,n}\vee \alpha ) +\Delta t \mathcal{R}_i^{k,n}\\
 &
 =u_i^{k,n}\vee \alpha-\lambda\Delta_{-}(\mathcal{F}_{i+1/2}^{k,n}(u_i^{k,n}\vee \alpha,u_{i+1}^{k,n}\vee \alpha))+\Delta t \mathcal{R}_i^{k,n}. \label{Mon1}
 \end{align}
 Similarly,
 \begin{align}
     \label{Mon2}
u_i^{k,n+1} &\geq u_i^{k,n}\wedge \alpha-\lambda\Delta_{-}(\mathcal{F}_{i+1/2}^{k,n}(u_i^{k,n}\wedge \alpha,u_{i+1}^{k,n}\wedge \alpha))+\Delta t \mathcal{R}_i^{k,n}.
 \end{align}
 Further, once again invoking the monotonicity of $\mathcal{H}_{C}^k$ in its last three arguments, we have, 
 \begin{align} \nonumber
 \alpha & = \alpha -\lambda\Delta_{-}(\mathcal{F}_{i+1/2}^{k,n}(\alpha,\alpha)) + \lambda\Delta_{-}(\mathcal{F}_{i+1/2}^{k,n}(\alpha,\alpha))  \\
 &= \mathcal{H}_{C}^k(x_{i-1/2},x_{i+1/2},c^{k,n}_{i-1/2},c^{k,n}_{i+1/2},\alpha,\alpha, \alpha)  +\lambda\Delta_{-}(\mathcal{F}_{i+1/2}^{k,n}(\alpha,\alpha)) \nonumber \\
 \label{Mon3}
 &\le 
 u_i^{k,n}\vee \alpha-\lambda\Delta_{-}(\mathcal{F}_{i+1/2}^{k,n}(u_i^{k,n}\vee \alpha,u_{i+1}^{k,n}\vee \alpha))+\lambda\Delta_{-}(\mathcal{F}_{i+1/2}^{k,n}(\alpha,\alpha)). 
\end{align}
Similarly,
\begin{align}
\label{Mon4}
\alpha &\geq u_i^{k,n}\wedge \alpha-\lambda\Delta_{-}(\mathcal{F}_{i+1/2}^{k,n}(u_i^{k,n}\wedge \alpha,u_{i+1}^{k,n}\wedge \alpha)) +\lambda\Delta_{-}(\mathcal{F}_{i+1/2}^{k,n}(\alpha,\alpha)).
\end{align}
Now setting
\begin{align*}
	a&=u_{i}^{k,n+1}-\alpha,\\[2mm] 
			c &=-\lambda\Delta_{-}(\mathcal{F}_{i+1/2}^{k,n}(\alpha,\alpha))+\Delta t \mathcal{R}_i^{k,n} ,\\[2mm] 
			b&=\lambda\Delta_{-}(\mathcal{F}_{i+1/2}^{k,n}(u_i^{k,n}\wedge\alpha,u_{i+1}^{k,n}\wedge\alpha))+\abs{u_i^{k,n}-\alpha}-\lambda\Delta_{-}(\mathcal{F}_{i+1/2}^{k,n}(u_i^{k,n}\vee\alpha,u_{i+1}^{k,n}\vee\alpha)),
		\end{align*}
and then, subtracting \eqref{Mon4} from \eqref{Mon1}, we have $a \leq b+c$, and subtracting \eqref{Mon3} from \eqref{Mon2}  we get
 $-a \leq b-c.$
 Now, the discrete entropy inequality follows from the conditional inequality
\begin{align*}
a\leq b+c \quad \text{and }-a \leq b-c \implies \abs{a} \leq b+\sgn(a)c.
 \end{align*}}
\end{enumerate}
\end{proof}
{\color{black}The following is a direct consequence of the above lemma and the uniqueness result Lemma~\ref{lemma:kuz}.}
  \begin{corollary}[Regularity of the entropy solution]\label{tC}
For $0\leq t\leq T$, 
{\color{black} there exists a unique entropy solution} of the IVP~\eqref{PDE_NL_k}, \eqref{eq:u11A} satisfying the following:
\begin{align*}
\boldsymbol{u}(t,\dott) &\in [0,1]^{N},\\
\norma{\boldsymbol{u}(t,\dott)}_{(L^{1}(\R))^{N}}&=\norma{\boldsymbol{u}_0}_{(L^{1}(\R))^{N}},\\
\sum_{k=1}^{N} \TV(u^k(t,\dott)) &\leq 
 \exp({\color{black}\mathcal{C}_{TV}}t) \sum_{k=1}^{N} \TV(u^k_0)  + \exp({\color{black}\mathcal{C}_{TV}}t)-1, \\
\norma{\boldsymbol{u}(t_2,\dott)-\boldsymbol{u}(t_1,\dott)}_{(L^1(\R))^{N}} &\leq N{\color{black}\mathcal{C}_{L}}\abs{t_2-t_1},\quad 0\leq t_1, t_2 \leq T.
\end{align*}
\end{corollary}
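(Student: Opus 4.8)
The plan is to obtain every assertion by passing the discrete estimates of Lemma~\ref{lem:stability} to the limit $\Delta x,\Delta t\to0$. Existence is already contained in the convergence statement of Lemma~\ref{lem:stability}: the approximations $\boldsymbol{u}_{\Delta}$ converge (along a subsequence, in $(C([0,T];L^1(\R)))^{N}$) to an entropy solution $\boldsymbol{u}$ of the IVP~\eqref{PDE_NL_k},\eqref{eq:u11A}, while uniqueness is furnished by Theorem~\ref{uniqueness} (itself a consequence of the Kuznetsov-type Lemma~\ref{lemma:kuz}). It then remains to transfer the four listed properties from $\boldsymbol{u}_{\Delta}$ to $\boldsymbol{u}$. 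Since the constants $\mathcal{C}_{TV}$ and $\mathcal{C}_{L}$ produced in Lemma~\ref{lem:stability} depend only on the data and not on the discretization parameters, no new constants are generated in the passage to the limit.

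For the invariant region principle, the discrete bound gives $0\le u^k_{\Delta}\le1$ pointwise; extracting an a.e.-convergent subsequence from the $L^1$-convergence yields $0\le u^k(t,\dott)\le1$ a.e., for every $k$ and $t$. For the $L^1$ identity I would combine the discrete total-mass conservation with the sign of $u^k_{\Delta}$: since $u^k_{\Delta}\ge0$, one has $\sum_{k}\norma{u^k_{\Delta}(t,\dott)}_{L^1(\R)}=\Delta x\sum_{k}\sum_{i}u_i^{k,n}=\Delta x\sum_{k}\sum_{i}u_i^{k,0}$, and letting $\Delta x,\Delta t\to0$ (using $L^1$-convergence on the left and the construction of the projected initial data on the right) gives $\norma{\boldsymbol{u}(t,\dott)}_{(L^1(\R))^{N}}=\norma{\boldsymbol{u}_0}_{(L^1(\R))^{N}}$. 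For the Lipschitz-in-time bound, the discrete estimate controls $\Delta x\sum_{i}\abs{u_i^{k,m}-u_i^{k,n}}$ by $\mathcal{C}_{L}\Delta t\,(m-n)=\mathcal{C}_{L}\abs{t^m-t^n}$; passing to the limit, summing over the $N$ lanes, and using the time-continuity of the limit to cover arbitrary non-grid times $t_1,t_2$, I obtain $\norma{\boldsymbol{u}(t_2,\dott)-\boldsymbol{u}(t_1,\dott)}_{(L^1(\R))^{N}}\le N\mathcal{C}_{L}\abs{t_2-t_1}$. One technical point is that the discrete time-Lipschitz constant depends on $\TV(\boldsymbol{u}_{\Delta}(t^n,\dott))$; this is rendered harmless by first bounding that total variation uniformly on $[0,T]$ through the estimate~\eqref{TV_u}.

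The only genuinely non-algebraic step is the total-variation bound, where I would invoke the lower semicontinuity of the total variation with respect to $L^1$-convergence: for each fixed $t$ and lane $k$, $\TV(u^k(t,\dott))\le\liminf_{\Delta x,\Delta t\to0}\TV(u^k_{\Delta}(t^n,\dott))$ with $t^n\to t$. Summing over $k$, inserting the discrete bound~\eqref{TV_u}, and using $\TV(u^{k,0})\le\TV(u_0^k)$ together with the continuity of $t\mapsto\exp(\mathcal{C}_{TV}t)$, one recovers the claimed inequality with the same constant $\mathcal{C}_{TV}$. Assembling these four limit passages completes the proof; I expect the lower-semicontinuity argument for the $\TV$ estimate, together with the uniform-in-time control of the time-Lipschitz constant, to be the only points requiring care, the remaining identities being immediate consequences of $L^1$-convergence and the nonnegativity of the solution.
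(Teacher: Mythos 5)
Your proposal is correct and follows exactly the route the paper intends: the paper gives no separate proof, stating only that the corollary is a direct consequence of Lemma~\ref{lem:stability} (whose discrete bounds you pass to the limit, with the same lower-semicontinuity and $\TV(u^{k,0})\le\TV(u^k_0)$ observations implicitly required) together with the uniqueness result. Your handling of the $t^n$-dependence of $\mathcal{C}_L$ via the uniform bound~\eqref{TV_u} is the right way to make the paper's implicit argument rigorous.
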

{\color{black}
\section{Nonlocal to local limits}\label{NLL}
In this section we prove that the entropy solution of the nonlocal system
\begin{align}\label{PDE_NL_k_linear}
\partial_t(u^{k,\eta})+\partial_x(u^{k,\eta}\nu^k( u^{k,\eta}*\omega_{\eta}))&=R^k(\boldsymbol{u}^{\eta}, \boldsymbol{u}^{\eta} \circledast \omega_{\eta}),\, \,&&(t,x) \in Q_T, k\in \mathcal{N},\\\label{intNL}
{u}_0^k&=u^k(x,0), &&x\in\R, \, \, k\in \mathcal{N},
\end{align}
converges to the entropy solution of the local system, namely,
\begin{align}\label{PDE_NL_k_linearL}
\partial_t(u^k)+\partial_x(u^k\nu^k( u^k))&=R^k (\boldsymbol{u}, \boldsymbol{u}),  &&(t,x) \in Q_T, k\in \mathcal{N},
\\\label{intL}
{u}_0^k&=u^k(x,0), &&x\in\R, \, \, k\in \mathcal{N},
\end{align} in a suitable topology
as $\omega_{\eta}\rightarrow \delta_{x=0}$, where $R^k(\boldsymbol{u}, \boldsymbol{u})$ {\color{black} (cf.~\eqref{eq:Rk}--\eqref{eq:Sk})} is same as the source term taken in \cite[Sec.~3]{HR2019}.

In comparison to the setup of \eqref{PDE_NL_k}, we assume that for $k \in \mathcal{N}$, $f^k(u)=u$, the velocity functions $\nu^k$ is independent of space variable with $D \nu^k \leq 0$ in $[0,1]$ {\color{black} and the kernel $\omega_\eta$ is convex}. 
Consequently, the source term $R^k$ is independent of the space variable. 

To this end consider the convective part of \eqref{PDE_NL_k_linear}:
\begin{align}\label{PDE_NL_k_linearC}
    v^{k,\eta}_t + (v^{k,\eta} \nu^k (v^{k,\eta} * \omega_\eta))_x = 0, \,\,v^{k}(0,x)=v_0^{k}(x), \, x \in \R, \, \, k\in \mathcal{N},
\end{align}
where $v^{k,\eta} = v^{k,\eta}(t,x)$, is its unique entropy solution satisfying \eqref{kruz2} with $R^k=0$, and  with 
  $\boldsymbol{v}_0\in ((L^1 \cap \BV) (\R;[0,1]))^{N}$, see also \cite{ACT2015,ACG2015,CK24,keimer2017existence}. Let $$\mathcal{S}^{k,\eta}_{C}(t, v_0^k) = v^{k,\eta}(t,x)$$ denote the solution operator at time $t$ with initial data $v_0^k$ of the convective part \eqref{PDE_NL_k_linearC}.  In view of \cite[Thm.~1.1]{CCMS2023}, for each $k \in \mathcal{N},$ we have:
\begin{align}\label{Con_TVD} \TV \big( \mathcal{S}^{k,\eta}_{C}(t, v^k_0) * \omega_\eta \big) \leq \TV (v^k_0 * \omega_\eta).
\end{align}
Next, consider a coupled system with a source term with the $k$th equation given by:
\begin{align} \label{eq:Source}
w^{k,\eta}_t = R^k \big( \boldsymbol{w}^{\eta}, (\boldsymbol{w}^{\eta} * \omega_{\eta})\big),\,\,\,w^{k}(0,x)=w_0^{k}(x), x \in \R, k\in \mathcal{N},
\end{align} 
with $\boldsymbol{w}_0\in ((L^1 \cap \BV) (\R;[0,1]))^{N}$ and 
where $\boldsymbol{w}^{\eta} = (w^{1,\eta}, \dots, w^{N,\eta})$. Let 
$$
\mathcal{S}^{k,\eta}_{\text{source}}(t, w_0^k) = w^{k,\eta}(t,x)
$$ 
denote the solution operator at time $t$ with initial data $\boldsymbol{w}_0$. Repeating the convergence analysis of the numerical scheme presented in Section \ref{exis} and the uniqueness result (cf.~Thm.~\ref{uniqueness}) for the case with zero convective flux, we get that the IVP \eqref{eq:Source}  admits a unique entropy solution $\boldsymbol{w}^{\eta}\in(\lip([0,T];L^1(\R;[0,1]))\cap L^{\infty}([0,T]; \BV(\R)))^{N}$. Note that the uniqueness can also be established without the entropy condition using the ODE theory in Banach spaces, but here we use the entropy approach for a self-contained proof, see also \cite{CK24}.

\begin{lemma}The entropy solution $\boldsymbol{w}^{\eta}$ of IVP \eqref{eq:Source} satisfies \begin{align} \label{eq:STVB}
\sum_{k \in \mathcal{N}}  \TV \big( \mathcal{S}^{k,\eta}_{\text{source}}(t, w^k_0) * \omega_\eta \big) \leq \exp \big( 12t \, \abs{\boldsymbol{S}}_{(\lip({[0,1]^4}))^N} \big) \sum_{k \in \mathcal{N}}  \TV (w^k_0).
\end{align}
\end{lemma}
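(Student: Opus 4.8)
The plan is to bound $\TV(w^{k,\eta}(t,\dott))$ directly and then deduce the stated estimate on the convolved quantity from the elementary fact that convolution with the nonnegative kernel $\omega_\eta$, normalised so that $\norma{\omega_\eta}_{L^1(\R)}=1$, does not increase total variation. Working with $w^{k,\eta}$ rather than its convolution is essential: $\partial_t(w^{k,\eta}*\omega_\eta)=\mathcal{R}^k_{\boldsymbol{w}^\eta}*\omega_\eta$ is the convolution of the \emph{nonlinear} source and cannot be closed through the convolved unknowns alone, whereas $\TV(w^{k,\eta})$ obeys a clean Gr\"onwall loop. Since \eqref{eq:Source} is a nonlocally coupled system of ODEs in time with globally Lipschitz right-hand side, its entropy solution satisfies $\boldsymbol{w}^\eta\in(\lip([0,T];L^1(\R;[0,1])))^N$, so for a.e.\ $t$ one has $\partial_t w^{k,\eta}(t,\dott)=\mathcal{R}^k_{\boldsymbol{w}^\eta}(t,\dott)$ in $L^1(\R)$.

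First I would fix a shift $h>0$ and set $W^k(t):=\norma{w^{k,\eta}(t,\dott+h)-w^{k,\eta}(t,\dott)}_{L^1(\R)}$ together with $\overline{W}^k(t):=\norma{(w^{k,\eta}*\omega_\eta)(t,\dott+h)-(w^{k,\eta}*\omega_\eta)(t,\dott)}_{L^1(\R)}$. Applying the elementary inequality $\frac{\d{}}{\d t}\norma{\phi(t,\dott)}_{L^1(\R)}\le\norma{\partial_t\phi(t,\dott)}_{L^1(\R)}$ to $\phi=w^{k,\eta}(\dott,\dott+h)-w^{k,\eta}(\dott,\dott)$, and using that here $S^{k-1},S^k$ are $x$-independent and Lipschitz (cf.~\eqref{est:Lip_S}), while $R^k$ depends only on the components $j\in\{k-1,k,k+1\}$ of $\boldsymbol{w}^\eta$ and of $\boldsymbol{w}^\eta*\omega_\eta$, I obtain
\begin{align*}
\frac{\d{}}{\d t}W^k(t)\le 2\abs{\boldsymbol{S}}_{(\lip([0,1]^4))^N}\sum_{j=k-1}^{k+1}\big(W^j(t)+\overline{W}^j(t)\big).
\end{align*}

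The key simplification is to control the nonlocal contributions $\overline{W}^j$ by the local ones: writing the shifted difference of the convolution as $\int_\R\big(w^{j,\eta}(t,\dott+h-z)-w^{j,\eta}(t,\dott-z)\big)\omega_\eta(z)\,\d z$ and applying Fubini's theorem with $\norma{\omega_\eta}_{L^1(\R)}=1$ gives $\overline{W}^j(t)\le W^j(t)$. Substituting and summing over $k\in\mathcal{N}$, with each index $j$ occurring in at most three of the inner sums, yields $\frac{\d{}}{\d t}\sum_{k\in\mathcal{N}}W^k(t)\le 12\,\abs{\boldsymbol{S}}_{(\lip([0,1]^4))^N}\sum_{k\in\mathcal{N}}W^k(t)$. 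Gr\"onwall's inequality then gives $\sum_k W^k(t)\le\exp\!\big(12t\,\abs{\boldsymbol{S}}_{(\lip([0,1]^4))^N}\big)\sum_k W^k(0)$; dividing by $h$, using $W^k(0)\le h\,\TV(w_0^k)$ and letting $h\to0^+$ in the characterisation $\TV(g)=\lim_{h\to0^+}h^{-1}\norma{g(\dott+h)-g(\dott)}_{L^1(\R)}$ produces $\sum_k\TV(w^{k,\eta}(t,\dott))\le\exp\!\big(12t\,\abs{\boldsymbol{S}}_{(\lip([0,1]^4))^N}\big)\sum_k\TV(w_0^k)$. Finally, the total-variation–diminishing property $\TV(w^{k,\eta}(t,\dott)*\omega_\eta)\le\TV(w^{k,\eta}(t,\dott))$ delivers \eqref{eq:STVB}.

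I expect the main obstacle to be not the Gr\"onwall loop, which is routine once $\overline{W}^j\le W^j$ is in hand, but the rigorous justification of differentiating $W^k$ in time and of the passage $h\to0^+$ for a merely $\BV$-in-$x$ entropy solution. A clean alternative that sidesteps these points is to run the entire argument at the discrete level: with zero convective flux the total-variation computation of Lemma~\ref{lem:stability} (cf.~\eqref{TVC1}) reduces to $\sum_k\TV(u^{k,n+1})\le(1+12\abs{\boldsymbol{S}}_{(\lip([0,1]^4))^N}\D t)\sum_k\TV(u^{k,n})$, provided one keeps the tridiagonal coupling so that the neighbour sum contributes the factor $3$ rather than $N$. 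Passing to the limit via lower semicontinuity of $\TV$ then gives the bound on $\sum_k\TV(w^{k,\eta})$, after which the convolution step is applied exactly as above.
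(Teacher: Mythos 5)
Your proposal is correct, and its primary argument takes a genuinely different route from the paper's. The paper works entirely at the discrete level: it runs the finite-volume scheme of Section~\ref{exis} with zero convective flux, so that the total-variation computation \eqref{TVC}--\eqref{TVC1} reduces to
\begin{align*}
\TV(w^{k,n+1,\eta})\le \TV(w^{k,n,\eta})+4\abs{\boldsymbol{S}}_{(\lip([0,1]^4))^N}\,\D t\,\big(\TV(w^{k,n,\eta})+\TV(w^{k-1,n,\eta})+\TV(w^{k+1,n,\eta})\big);
\end{align*}
summing over $k$ while keeping the tridiagonal coupling (each index hit at most three times) yields the factor $12$, induction gives the discrete exponential bound, the bound passes to the entropy solution in the limit of the scheme, and convolving with $\omega_\eta$ finishes --- in other words, the paper's proof is essentially the ``clean alternative'' of your final paragraph. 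Your primary route instead runs a continuous Gr\"onwall argument on the $L^1$ modulus of translates $W^k(t)$, exploiting that with zero flux the entropy solution solves the Banach-space ODE $\partial_t w^{k,\eta}=\mathcal{R}^k_{\boldsymbol{w}^\eta}$; your bookkeeping ($2\abs{\boldsymbol{S}}$ per $S$-difference, $\overline{W}^j\le W^j$ by Young's inequality, at most three occurrences of each index) reproduces exactly the constant $12$. What each buys: your continuous argument is shorter and yields the (formally stronger) bound on $\TV(w^{k,\eta}(t,\cdot))$ itself before convolving, while the paper's discrete argument sidesteps precisely the two points you flag --- a.e.\ differentiability of $t\mapsto W^k(t)$ and the passage $h\to 0^+$ --- since translation estimates are immediate for piecewise-constant approximants and lower semicontinuity of $\TV$ under $L^1_{\mathrm{loc}}$ convergence handles the limit. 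The one step you should make explicit in the continuous route is the identification of the entropy solution of \eqref{eq:Source} with the ODE solution; the paper's own remark after \eqref{eq:Source} (uniqueness via ODE theory in Banach spaces) licenses this, and since $\boldsymbol{w}^\eta\in(\lip([0,T];L^1(\R)))^N$ the map $W^k$ is Lipschitz, so the a.e.\ differential inequality plus Gr\"onwall is rigorous. Note finally that the paper loosely asserts $\TV(z*\omega_\eta)=\TV(z)$ where only the inequality $\TV(z*\omega_\eta)\le\TV(z)$ holds in general; your version uses the correct inequality, which is all that \eqref{eq:STVB} requires.
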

\begin{proof}
 Consider numerical approximations to \eqref{eq:Source} as done before (cf.~Section \ref{exis}), that the approximations satisfy (cf.~\eqref{TVC}--\eqref{TVC1}),
\begin{align*}
\TV(w^{k,n+1,\eta})
&\leq \TV(w^{k,n,\eta})+ 4\abs{\boldsymbol{S}}_{(\lip({[0,1]^4}))^N}\D t \bigg(  \TV(w^{k,n,\eta}) + \TV(w^{k-1,n,\eta})+ \TV(w^{k+1,n,\eta})\bigg). 
\end{align*} 
Summing over $k \in \mathcal{N}$, we obtain:
\begin{align*}
\sum_{k \in \mathcal{N}} \TV(w^{k,n+1,\eta}) &\leq \sum_{k \in \mathcal{N}} \TV(w^{k,n,\eta}) +12\abs{\boldsymbol{S}}_{(\lip({[0,1]^4}))^N}\D t \sum_{k \in \mathcal{N}}\TV(w^{k,n,\eta})   \\
&\leq (1+12\abs{\boldsymbol{S}}_{(\lip({[0,1]^4}))^N}\D t) \sum_{k \in \mathcal{N}} \TV(w^{k,n,\eta})   \\
&\leq \exp \big( 12 n \Delta t \, \abs{\boldsymbol{S}}_{(\lip({[0,1]^4}))^N} \big) \sum_{k \in \mathcal{N}} \TV(w_0^{k}).
\end{align*}

Furthermore, for any function $z$, since $\TV (z*\omega_{\eta}) = \TV (z)$, we get that entropy solution, which is the limit of this numerical approximation satisfies:
\begin{align*} \sum_{k \in \mathcal{N}} \TV \big( (w^{k,\eta} * \omega_{\eta})(t,\dott) \big) \leq \exp \big( 12t \, \abs{\boldsymbol{S}}_{(\lip({[0,1]^4}))^N} \big) \sum_{k \in \mathcal{N}} \TV (w^k_0),\end{align*} proving the lemma.
\end{proof}
The lemma implies that $\mathcal{S}^{k,\eta}_{\text{source}}(\dott, w^k_0) * \omega_\eta $ is uniformly total variation bounded in $[0,T]$ and the bound is  independent of $\eta$. In the following lemma,  we now show that a similar uniform bound on $u^{k,\eta}*\omega_{\eta}$ can be obtained on the entropy solution of the system \eqref{PDE_NL_k_linear} as well.

\begin{lemma}
The entropy solution $\boldsymbol{u}^{\eta}$  of \eqref{PDE_NL_k_linear}--\eqref{intNL} with $\boldsymbol{u}_0\in ((L^1 \cap \BV) (\R;[0,1]))^{N},$ satisfies
\begin{align}\label{conv_bv}
\sum_{k \in \mathcal{N}} \TV(u^{k,\eta}*\omega_{\eta})(t,\dott) \leq \exp \big(12 t \, \abs{\boldsymbol{S}}_{(\lip({[0,1]^4}))^N} \big) \sum_{k \in \mathcal{N}} \TV(u^k_0*\omega_{\eta}).
\end{align}
\end{lemma}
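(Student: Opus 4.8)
The plan is to establish \eqref{conv_bv} by the announced source-splitting technique, combining the non-increase of the convolution total variation under the convective flow \eqref{Con_TVD} with the exponential growth under the source flow \eqref{eq:STVB}. Fix $\eta$ and $t\in[0,T]$, choose $n\in\mathbb{N}$, put $\Delta t=t/n$, and define a Lie--Trotter splitting approximation $\boldsymbol{u}_{\Delta t}$ of \eqref{PDE_NL_k_linear}--\eqref{intNL} by alternately solving the purely convective system \eqref{PDE_NL_k_linearC} and the purely source system \eqref{eq:Source}, each over subintervals of length $\Delta t$: set $\boldsymbol{u}_{\Delta t}(0,\dott)=\boldsymbol{u}_0$ and, for $m=0,\dots,n-1$ and $k\in\mathcal{N}$,
\[
u^{k}_{\Delta t}\big((m+1)\Delta t,\dott\big)=\mathcal{S}^{k,\eta}_{\text{source}}\Big(\Delta t,\;\mathcal{S}^{k,\eta}_{C}\big(\Delta t,u^{k}_{\Delta t}(m\Delta t,\dott)\big)\Big),
\]
where $\mathcal{S}^{k,\eta}_{\text{source}}$ acts on the whole vector of the convected states as in \eqref{eq:Source}.

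Next I would propagate the quantity $Q^m:=\sum_{k\in\mathcal{N}}\TV\big(u^{k}_{\Delta t}(m\Delta t,\dott)*\omega_{\eta}\big)$ through one full step. The convective half-step leaves every summand non-increasing by \eqref{Con_TVD}, so it does not increase $Q^m$. For the source half-step I would invoke \eqref{eq:STVB}; since the proof of that bound rests on the relation $\TV(z*\omega_{\eta})=\TV(z)$, its right-hand side may be read as $\exp(12\Delta t\,\abs{\boldsymbol{S}}_{(\lip({[0,1]^4}))^N})$ times the convolution total variation of the data, so that the source half-step multiplies $Q^m$ by at most $\exp(12\Delta t\,\abs{\boldsymbol{S}}_{(\lip({[0,1]^4}))^N})$. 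Hence $Q^{m+1}\le\exp(12\Delta t\,\abs{\boldsymbol{S}}_{(\lip({[0,1]^4}))^N})\,Q^m$, and iterating over the $n$ steps yields
\[
Q^{n}\le\exp\big(12t\,\abs{\boldsymbol{S}}_{(\lip({[0,1]^4}))^N}\big)\sum_{k\in\mathcal{N}}\TV(u^k_0*\omega_{\eta}).
\]

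Finally I would send $\Delta t\to0$. Each split operator is an $L^1$-contraction and, for fixed $\eta$, produces approximations that are uniformly bounded in $\BV(\R)$ and Lipschitz in time into $L^1(\R)$ (from the stability estimates of Section \ref{exis} applied to the two subproblems), giving compactness in $(C([0,T];L^1(\R)))^N$; the Kuznetsov-type estimate of Lemma \ref{lemma:kuz} then identifies the limit as the unique entropy solution $\boldsymbol{u}^{\eta}$, so $\boldsymbol{u}_{\Delta t}(t,\dott)\to\boldsymbol{u}^{\eta}(t,\dott)$ in $(L^1(\R))^N$. Because $\norma{\omega_{\eta}}_{L^1(\R)}=1$, convolution with $\omega_{\eta}$ is a contraction on $L^1(\R)$, so $u^{k}_{\Delta t}(t,\dott)*\omega_{\eta}\to u^{k,\eta}(t,\dott)*\omega_{\eta}$ in $L^1(\R)$; lower semicontinuity of the total variation under $L^1$ convergence and summation over the finitely many lanes then give $\sum_{k}\TV(u^{k,\eta}(t,\dott)*\omega_{\eta})\le\liminf_{\Delta t\to0}Q^{n}$, which is \eqref{conv_bv}. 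The main obstacle is the source half-step: \eqref{eq:STVB} naturally controls growth in terms of $\sum_k\TV(w_0^k)$, whereas closing the iteration in the convolution total variation $Q^m$ requires comparing $\TV(\dott)$ with $\TV(\dott*\omega_{\eta})$ — the delicate point, since after a convective step only $\TV(\dott*\omega_{\eta})$ is controlled uniformly in $\eta$, not $\TV(\dott)$. It is precisely here that the convexity of $\omega_{\eta}$ and the sign condition $D\nu^k\le0$ must be exploited, exactly as the convective bound \eqref{Con_TVD} already does.
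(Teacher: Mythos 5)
Your proposal follows the paper's route almost verbatim: the paper also proves \eqref{conv_bv} by Lie--Trotter source splitting between \eqref{PDE_NL_k_linearC} and \eqref{eq:Source}, propagates the two estimates \eqref{Con_TVD} and \eqref{eq:STVB} through the steps to get the uniform bound \eqref{eq:TVB_split}, and passes to the limit $\Delta t\to 0$ using the ($\eta$-dependent) BV and time-Lipschitz bounds of Section \ref{exis} together with lower semicontinuity of $\TV$ under $L^1$ convergence. The only cosmetic differences are the order of the half-steps (the paper applies the source operator first, then the convective one) and the identification of the limit, which the paper does by a gluing argument plus the uniqueness theorem rather than by re-invoking Lemma \ref{lemma:kuz}; both are immaterial.

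The one substantive point is the ``main obstacle'' you flag at the end, and you should know how the paper deals with it: it simply asserts, inside the proof of \eqref{eq:STVB}, that $\TV(z*\omega_\eta)=\TV(z)$ for any $z$, which makes raw and convolved total variation interchangeable and lets the iteration close in either quantity. In general only $\TV(z*\omega_\eta)\le\norma{\omega_\eta}_{L^1(\R)}\TV(z)=\TV(z)$ holds (convolution can strictly decrease variation, e.g.\ for a narrow bump), so your reading of \eqref{eq:STVB} ``in convolution total variation'' rests on exactly the same unproven conversion $\TV(w)\le\TV(w*\omega_\eta)$ that the paper's equality claim sweeps under the rug; your per-step bound $Q^{m+1}\le e^{12\Delta t\abs{\boldsymbol{S}}_{(\lip([0,1]^4))^N}}Q^m$ and the paper's chaining stand or fall together at this point. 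Two remarks: first, your suggested fix is mis-aimed --- the convexity of $\omega_\eta$ and the condition $D\nu^k\le 0$ are precisely what power the \emph{convective} bound \eqref{Con_TVD} via \cite[Thm.~1.1]{CCMS2023}, and they play no role in the \emph{source} half-step, which is where the conversion between $\TV(\dott)$ and $\TV(\dott*\omega_\eta)$ is needed; a genuine repair would instead close the source estimate directly at the level of the convolved quantities $w^k*\omega_\eta$ (convolving \eqref{eq:Source} and exploiting that the coefficients $\nu^{k+1}(B)-\nu^k(A)$ in $S^k$ depend only on convolved fields), which neither you nor the paper carries out. Second, note that your (conditional) conclusion has $\sum_k\TV(u_0^k*\omega_\eta)$ on the right, matching the lemma's statement \eqref{conv_bv}, whereas the paper's own intermediate bound \eqref{eq:TVB_split} ends with the larger quantity $\sum_k\TV(u_0^k)$ --- a small internal inconsistency of the paper that your formulation avoids. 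So: same approach, same genuine gap; you are more candid about it, but you do not close it.
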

\begin{proof}
To prove the lemma, we employ the so-called source-splitting technique \cite{HKLR2010}. Define $t^n = n \Delta t$ and a sequence of functions $\{ \boldsymbol{u}^{n,\eta} \}$ in $L^1(\R;\R^N)$ by:
\begin{align*} 
u^{k,0,\eta} &:= u^k_0, \\
 {u}^{k,n+1,\eta} &:= \mathcal{S}^k_{C} \big( \Delta t, \mathcal{S}^k_{\text{source}}(\Delta t, u^{k,n,\eta}) \big),  \quad \text{for } k \in \mathcal{N}. 
 \end{align*}
Consider the splitting approximations $\boldsymbol{u}^{\eta}_{\D t}=(u^{1,\eta}_{\D t},\ldots,u^{N,\eta}_{\D t})$ to the system of nonlocal PDEs \eqref{PDE_NL_k_linear} defined by:
\begin{align*} u^{k,\eta}_{\Delta t}(t,\dott) :=
\begin{cases}\mathcal{S}^k_{C} \big( 2(t - t_n), u^{k,n,\eta} \big)
, & t \in [t_n, t_{n+1/2}), \\
\mathcal{S}^k_{\text{source}} \big( 2(t - t_n), S^k_{C}(\Delta t, u^{k,n,\eta}) \big), & t \in [t_{n+1/2}, t_{n+1}), \quad \text{for } k\in \mathcal{N}. 
\end{cases} 
\end{align*}
Clearly, $\boldsymbol{u}^{\eta}_{\Delta t} \in (\lip([0,T];L^1(\R;[0,1])))^{N}$, and for each $t\in[0,T]$, from \eqref{TV_u}, 
$ \sum_{k\in\mathcal{N}}\TV (u^{k,\eta}_{\Delta t}(t,\dott))$ is uniformly bounded with the uniform bound being independent of $\D t$, but could potentially depend on $\eta$.  {\color{black}Consequently, using BV compactness, as $\Delta t \rightarrow 0$, up to a subsequence, the sequence of functions $\boldsymbol{u}^{\eta}_{\Delta t}$ generated by the splitting algorithm converges pointwise almost everywhere and in $(L^1_{\text{loc}}([0,T] \times \mathbb{R}))^N$ to a function $\boldsymbol{u}^\eta$. This limit can be shown to be the unique entropy solution of the nonlocal system \eqref{PDE_NL_k_linear}--\eqref{intNL}, using a gluing argument as detailed in \cite[p.~216]{HR2015}. Now, by uniqueness of the entropy solution, infact the entire sequence $\boldsymbol{u}^{\eta}_{\Delta t}$ converges to the entropy solution $\boldsymbol{u}^\eta$.}

On the other hand, from \eqref{Con_TVD} and \eqref{eq:STVB}, we have,
\begin{align} \label{eq:TVB_split}
 \sum_{k\in \mathcal{N}} \TV ((u_{\Delta t}^k * \omega_\eta)(t,\dott)) \leq \exp \big( 12 t \, \abs{\boldsymbol{S}}_{(\lip({[0,1]^4}))^N} \big) \sum_{k\in \mathcal{N}} \TV (u_0^k),
\end{align} 
and hence, the limit, viz., the entropy solution $\boldsymbol{u}^{\eta}$ of \eqref{PDE_NL_k_linear}--\eqref{intNL} satisfies the claim of the lemma.
\end{proof}

\begin{theorem}
Let $\boldsymbol{u}^{\eta}$ be the entropy solution of the  system \eqref{PDE_NL_k_linear}--\eqref{intNL}. As the  support of the kernel $\eta \rightarrow 0$, $\boldsymbol{u}^{\eta}$ converges to the entropy solution $\boldsymbol{u}$ of the local conservation law \eqref{PDE_NL_k_linearL}--\eqref{intL}
in weakly* in $(L^\infty([0,T] \times \mathbb{R}))^N$.
\end{theorem}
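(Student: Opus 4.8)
The plan is to prove the nonlocal-to-local convergence via the same source-splitting structure already set up in this section, combined with a compactness argument and identification of the limit as the entropy solution of the local system. The key point that makes this tractable is the uniform total variation bound \eqref{conv_bv}, which is independent of $\eta$.

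\textbf{Step 1: Uniform bounds and compactness.} First I would collect the $\eta$-independent estimates on $\boldsymbol{u}^{\eta}$. The invariant region principle (Lemma~\ref{lem:stability}\ref{IRP}) gives $\boldsymbol{u}^{\eta}(t,\dott)\in[0,1]^N$, so $\{\boldsymbol{u}^{\eta}\}_{\eta>0}$ is uniformly bounded in $(L^{\infty}(Q_T))^N$. By Banach--Alaoglu, there is a subsequence $\eta_j\to 0$ and a limit $\boldsymbol{u}\in(L^{\infty}(Q_T))^N$ with $\boldsymbol{u}^{\eta_j}\rightharpoonup^{*}\boldsymbol{u}$ weakly* in $(L^{\infty}([0,T]\times\R))^N$. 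This yields the asserted mode of convergence directly; the real work is identifying the limit.

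\textbf{Step 2: Strong compactness of the convolved quantity.} The crucial device is $\boldsymbol{u}^{\eta}\circledast\omega_{\eta}$. By \eqref{conv_bv}, $\sum_k\TV\big((u^{k,\eta}*\omega_{\eta})(t,\dott)\big)$ is bounded uniformly in $\eta$ and $t\in[0,T]$, and the Lipschitz-in-time estimate (Lemma~\ref{lem:stability}\ref{lem:TE}) together with $\norma{\tilde{\mathcal{U}}^k(t,\dott)-\tilde{\mathcal{U}}^k(s,\dott)}_{L^1(\R)}\le\norma{\omega_\eta}_{L^1}\norma{u^k(t,\dott)-u^k(s,\dott)}_{L^1}$ transfers the time-continuity to $u^{k,\eta}*\omega_\eta$. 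Hence by the Kolmogorov--Riesz / Aubin--Lions type compactness theorem, $u^{k,\eta_j}*\omega_{\eta_j}\to\zeta^k$ strongly in $L^1_{\mathrm{loc}}(Q_T)$ and a.e. Next I would show that $u^{k,\eta_j}-u^{k,\eta_j}*\omega_{\eta_j}\to 0$ in $L^1_{\mathrm{loc}}$: since $\norma{z-z*\omega_\eta}_{L^1}\le\eta\,\norma{\omega_\eta}_{L^1}\,\TV(z)$ for the one-sided kernel and $\TV(u^{k,\eta}(t,\dott))$ is uniformly bounded by Corollary~\ref{tC}, this difference is $O(\eta)$ in $L^1_x$ uniformly in time. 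Therefore $u^{k,\eta_j}\to\zeta^k$ strongly in $L^1_{\mathrm{loc}}$ as well, which forces $\zeta^k=u^k$ and upgrades the weak* convergence of Step~1 to strong $L^1_{\mathrm{loc}}$ and a.e. convergence.

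\textbf{Step 3: Passing to the limit in the entropy inequality.} With a.e. convergence $u^{k,\eta_j}\to u^k$ and $u^{k,\eta_j}*\omega_{\eta_j}\to u^k$ in hand, I would pass to the limit in the Kru\v{z}kov entropy inequality \eqref{kruz2} written for $\boldsymbol{u}^{\eta}$. Under the present simplifying hypotheses ($f^k(u)=u$, $\nu^k$ space-independent with $D\nu^k\le0$) one has $\mathcal{U}^k(t,x)=\nu^k\big((u^{k,\eta}*\omega_\eta)(t,x)\big)\to\nu^k(u^k(t,x))$ a.e. by continuity of $\nu^k$, and $\mathcal{U}_x^k$ no longer appears since $f^k(\alpha)=\alpha$ couples with the $\nu^k$ term in a form that passes cleanly; likewise $\mathcal{R}^k_{\boldsymbol{u}^{\eta}}\to R^k(\boldsymbol{u},\boldsymbol{u})$ by continuity of $R^k$ and the established convergences. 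Dominated convergence (all integrands are bounded and supported on a compact set via $\phi$) then yields exactly the entropy inequality for the local system \eqref{PDE_NL_k_linearL}. By the uniqueness theory for local systems of this type \cite{HR2019,HR2015}, the limit $\boldsymbol{u}$ is the unique entropy solution, so the whole family (not merely a subsequence) converges.

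\textbf{Main obstacle.} I expect the principal difficulty to be Step~2 --- rigorously justifying the strong compactness of $u^{k,\eta}*\omega_\eta$ and the closing estimate $\norma{u^{k,\eta}-u^{k,\eta}*\omega_\eta}_{L^1_x}=O(\eta)$ uniformly in time, because the one-sided, merely $\BV\cap W^{2,\infty}$ kernel is less regular than in the smooth-kernel literature and the convexity assumption on $\omega_\eta$ is what underlies the $\eta$-independent $\TV$ propagation \eqref{Con_TVD} borrowed from \cite{CCMS2023}. Care is needed that the compactness is genuinely uniform in $\eta$ and that the limit of the convolution coincides with the limit of $\boldsymbol{u}^{\eta}$ itself; once that identification is secured, the passage to the limit in Step~3 is routine.
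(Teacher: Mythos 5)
There is a genuine gap in your Step~2, and it sits exactly at the crux of the nonlocal-to-local problem. Your closing estimate $\norma{u^{k,\eta}-u^{k,\eta}*\omega_\eta}_{L^1(\R)}\le \eta\,\TV(u^{k,\eta}(t,\dott))$ needs a total variation bound on $u^{k,\eta}$ \emph{itself} that is uniform in $\eta$, and Corollary~\ref{tC} does not supply one: the constant $\mathcal{C}_{TV}$ in \eqref{TV_u} (see the constants in the proof of Lemma~\ref{lem:stability}(c)) depends on $\omega_\eta(0)=\norma{\omega_\eta}_{L^\infty(\R)}=\abs{\omega_\eta}_{\BV(\R)}$, which blows up at least like $1/\eta$ because $\norma{\omega_\eta}_{L^1(\R)}=1$ on a support of length $\eta$. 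The same objection applies to your appeal to Lemma~\ref{lem:stability}\ref{lem:TE} for time-equicontinuity: $\mathcal{C}_{L}$ also contains $\omega_\eta(0)$ and $\TV(\boldsymbol{u}_{\Delta})$. Indeed, $\eta$-uniform $\BV$ compactness of $u^{k,\eta}$ is known to fail in general in this setting (total variation can blow up in the local limit, cf.~\cite{CCS2019}), which is precisely why the theorem claims only weak* convergence of $\boldsymbol{u}^{\eta}$, while strong $L^1_{\mathrm{loc}}$ compactness is available only for the convolved quantity $\boldsymbol{u}^{\eta}\circledast\omega_\eta$ through \eqref{conv_bv}. Once strong convergence of $u^{k,\eta}$ is lost, your Step~3 collapses as well: the Kru\v{z}kov terms $\abs{u^{k,\eta}-\alpha}$, $\sgn(u^{k,\eta}-\alpha)(\cdots)$ and $R^k(\boldsymbol{u}^{\eta},\cdot)$ are nonlinear in $u^{k,\eta}$ and do not pass through weak* limits.

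The paper's proof circumvents this by never working with the entropy inequality for $u^{k,\eta}$: it convolves the PDE with $\omega_\eta$, multiplies by $D\Upsilon^k(u^{k,\eta}*\omega_\eta)$ for a smooth entropy pair, and derives an approximate entropy inequality \eqref{De}--\eqref{De1} for the strongly convergent variable $u^{k,\eta}*\omega_\eta$; the convective commutator $I^{\eta}_{D_1}$ is handled by the arguments of \cite[Thm.~1.2]{CCMS2023} (this is where convexity of $\omega_\eta$ and \eqref{Con_TVD} enter), and the source commutator $I^{\eta}_{D_2}$ by the estimate \eqref{zZ},
\begin{align*}
\int_{Q_T}\abs{\left((Zz)*\omega_\eta\right)(t,x)-Z(t,x)\left(z*\omega_\eta\right)(t,x)}\d t \d x \le \eta\, T\,\abs{Z}_{L^{\infty}_t \BV_x},
\end{align*}
applied with $Z=(\nu^{k+1}(B)-\nu^{k}(A))^{\pm}$, whose $\BV$ norm is $\eta$-uniform precisely because it involves only the convolved quantities $u^{i,\eta}*\omega_\eta$. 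Finally, the weak* convergence of $\boldsymbol{u}^{\eta}$ itself is obtained not by upgrading to strong convergence, but by the elementary duality observation (cf.~\cite[Thm.~1.3, Step 3]{CCMS2023}) that moving the convolution onto a smooth test function shows every weak* limit point of $u^{k,\eta}$ coincides with the already-identified strong limit of $u^{k,\eta}*\omega_\eta$; uniqueness for the local system \cite[Thm.~3.2]{HR2019} then yields convergence of the whole family. Your Step~1 and the overall architecture (uniform bound \eqref{conv_bv}, compactness, identification, uniqueness) match the paper, but the strong-compactness shortcut in Step~2 is a step that would fail, and with it the direct limit passage in \eqref{kruz2}.
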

\begin{proof}
The uniform total variation bound obtained in \eqref{conv_bv} implies that as $\eta \to 0$, up to a subsequence, 
$\boldsymbol{u}^{\eta}\circledast \omega_{\eta} \to \boldsymbol{u}$ strongly in $(L^1_{\text{loc}}([0,T] \times \mathbb{R}))^N$. To complete the proof of the theorem, we show that the limit $\boldsymbol{u}$ is the entropy solution of the local conservation law \eqref{PDE_NL_k_linearL}--\eqref{intL}, and that $\boldsymbol{u}$ is indeed the weak* limit of $\boldsymbol{u}^{\eta} $ which we show by modifying the arguments of \cite{CCMS2023}.

To show that $\boldsymbol{u}$ is indeed the entropy solution of \eqref{PDE_NL_k_linearL}--\eqref{intL} {\color{black}in the sense of \cite[Def.~3.1]{HR2019}}, we consider the smooth entropy flux pair $(\boldsymbol{\Upsilon}, \boldsymbol{\Xi})$ for \eqref{PDE_NL_k_linearL}, where $\boldsymbol{\Upsilon}=(\Upsilon^1,\ldots,\Upsilon^N)$ and $\boldsymbol{\Xi}=(\Xi^1,\ldots,\Xi^N)$. We prove that for each $ k \in \mathcal{N}$, {\color{black}$u^k$ satisfies the entropy condition,
\begin{multline*}
\int_{Q_T}\big(\Upsilon^k(u^{k})\phi_t+ \Xi^k(u^{k})\phi_x\big)  \d t \d x+\int_{\R} u_0^{k}(x)\phi(0,x) \d x 
\\ +\int_{Q_T}D\Upsilon^k(u^{k}) R^k(\boldsymbol{u},\boldsymbol{u})  \phi \d t \d x\geq 0,
\end{multline*} 
for all nonnegative $\phi \in C_c^{\infty}([0,T) \times \R)$.
To this end, we consider} 
\begin{align}\label{De}
\begin{split}
D^{\boldsymbol{\Upsilon},k}_{\eta}(\phi):= & \int_{Q_T}\Upsilon^k(u^{k,\eta}*\omega_{\eta})\phi_t+ \Xi^k(u^{k,\eta}*\omega_{\eta})\phi_x  \d t \d x\\ & \quad  +\int_{\R} (u_0^{k}*\omega_{\eta})(x)\phi(0,x) \d x 
  \\ & \quad +  \int_{Q_T}D\Upsilon^k(u^{k,\eta}*\omega_{\eta}) R^k(\boldsymbol{u}^\eta\circledast\omega_{\eta},\boldsymbol{u}^\eta\circledast\omega_{\eta})  \phi \d t \d x,
 \end{split}
\end{align}
 and prove that \begin{align}\label{Dr}
   \lim\limits_{\eta \rightarrow 0}  D^{\boldsymbol{\Upsilon},k}_{\eta}(\phi) \geq 0, \quad \text{for all nonnegative }\phi \in C_c^{\infty}([0,T) \times \R).
\end{align}

Convolving \eqref{PDE_NL_k_linear} with $ \omega_{\eta} $, and then multiplying the result with $ D\Upsilon^k(u^{k,\eta}*\omega_{\eta}) $, we get:
\begin{align*}
&\partial_t \Upsilon^k(u^{k,\eta}*\omega_{\eta}) + D\Upsilon^k(u^{k,\eta}*\omega_{\eta}) \partial_x( \big(\nu^k(u^{k,\eta}*\omega_{\eta}) u^{k,\eta}\big) * \omega_{\eta})\\
&\quad-D\Upsilon^k(u^{k,\eta}*\omega_{\eta})(R^k(\boldsymbol{u}^\eta,\boldsymbol{u}^\eta\circledast\omega_{\eta})*\omega_{\eta})=0.
\end{align*}

Next, we multiply the above equation with the test function $ \phi $, integrate over $Q_T$ and then apply the integration by parts formula to obtain
\begin{align}\label{Dw}
\begin{split}
&\int_{Q_T}\Upsilon^k(u^{k,\eta}*\omega_{\eta})\phi_t + \partial_x(D\Upsilon^k(u^{k,\eta}*\omega_{\eta})\phi) ([\nu^k(u^{k,\eta}*\omega_{\eta}) u^{k,\eta}] * \omega_{\eta} )\d x\d t\\
&+\int_{\R}\Upsilon^k(u_0^{k}*\omega_{\eta})(x)\phi(0,x)\d x-\int_{Q_T}D\Upsilon^k(u^{k,\eta}*\omega_{\eta})(R^k(\boldsymbol{u}^{\eta},\boldsymbol{u}^{\eta}\circledast\omega_{\eta})*\omega_{\eta})\d x\d t=0 .
\end{split}
\end{align}
Combining  \eqref{De} and \eqref{Dw}, we get
\begin{align}\label{De1}
\begin{split}
D^{\boldsymbol{\Upsilon},k}_{\eta}(\phi) &= \left( \int_{Q_T} \Xi^k\left( u^{k,\eta} * \omega_{\eta} \right) \phi_x \, \mathrm{d} t \, \mathrm{d} x \right) \\
&\quad - \left( \int_{Q_T} \partial_x \left( D\Upsilon^k\left( u^{k,\eta} * \omega_{\eta} \right) \phi \right) \left[ \nu^k\left( u^{k,\eta} * \omega_{\eta} \right) u^{k,\eta} \right] * \omega_{\eta} \, \mathrm{d} x \, \mathrm{d} t \right) \\
&\quad + \left( \int_{Q_T} D\Upsilon^k\left( u^{k,\eta} * \omega_{\eta} \right) R^k\left( \boldsymbol{u}^\eta \circledast \omega_{\eta}, \boldsymbol{u}^\eta \circledast \omega_{\eta} \right) \phi \, \mathrm{d} t \, \mathrm{d} x \right) \\
&\quad - \left( \int_{Q_T} D\Upsilon^k\left( u^{k,\eta} * \omega_{\eta} \right) \left( R^k\left( \boldsymbol{u}^{\eta}, \boldsymbol{u}^{\eta} \circledast \omega_{\eta} \right) * \omega_{\eta} \right) \phi \, \mathrm{d} x \, \mathrm{d} t \right) \\
&:= I^{\eta}_{D_1} + I^{\eta}_{D_2}.
\end{split}
\end{align}
Arguing as in \cite[Thm.~1.2, cf.~(4.4), (4.7), and (4.8)]{CCMS2023}, $I^{\eta}_{D_1}$, coming from the convective part satisfies $\lim\limits_{\eta \rightarrow 0 }I^{\eta}_{D_1} \geq 0$, for all nonnegative $\phi \in C_c^{\infty}([0,T) \times \R)$.

We now deal with $I^{\eta}_{D_2}$. For any {\color{black}two} functions, $Z \in L^1([0,T]\times \R)$ with $|Z|_{L^\infty_t\BV_x}< \infty$, and $z\in L^1([0,T]\times \R) \cap L^{\infty}([0,T]\times \R)$, using the properties of the convolution, we first note the following: 
\begin{align}\label{zZ}
\begin{split}
&\int_{Q_T} \left| \left( \left( Z z \right) * \omega_\eta \right)(t,x) - Z(t,x) \left( z * \omega_\eta \right)(t,x) \right| \, \d t \d x \\
&= \int_{Q_T} \left| \int_{\mathbb{R}} z(t,y) \left[ Z(t,y) - Z(t,x) \right] \omega_\eta(x-y) \, \mathrm{d} y \right| \, \d t \d x \\
&\leq \int_{Q_T} \int_{\mathbb{R}} \left| z(t,y) \left[ Z(t,y) - Z(t,x) \right] \omega_\eta(x-y) \right| \, \d y  \d t \d x \\
&\leq \eta T |Z|_{L^\infty_t \mathrm{BV}_x}.
\end{split}
\end{align}
Now, referring to \eqref{eq:Sk} and \eqref{De1}--\eqref{zZ}, for every $k\in\mathcal{N}\setminus N
$, we conclude the following estimates with $a=u^k,b=u^{k+1},A=u^k*\omega_{\eta},B=u^{k+1}*\omega_{\eta},$
\begin{align*}
\begin{split}
&\int_{Q_T}\left| \left( \left( \nu^{k+1}(B) - \nu^k(A) \right)^+ a \right) * \omega_{\eta}(t,x) - \left( \left( \nu^{k+1}(B) - \nu^k(A) \right)^+ A(t,x) \right) \right| \d t \d x\\
&\quad \le \eta \sum_{i=k}^{k+1} \|\nu^i\|_{\text{Lip}(\mathbb{R})} |u^i * \omega_\eta|_{L^\infty_t \BV_x}, \\[2mm]
&\int_{Q_T} \left| \left( \left( \nu^{k+1}(B) - \nu^k(A) \right)^- b \right) * \omega_{\eta}(t,x) - \left( \left( \nu^{k+1}(B) - \nu^k(A) \right)^- B(t,x) \right) \right| \d t \d x
\\
&\quad \le \eta \sum_{i=k}^{k+1} \|\nu^i\|_{\text{Lip}(\mathbb{R})} |u^i * \omega_\eta|_{L^\infty_t \BV_x},
\end{split}
\end{align*}
 with $z=a,Z=(\nu^{k+1}(B)-\nu^k({ }A))^+$, and $z=b,Z=(\nu^{k+1}(B)-\nu^k({ }A))^-$ respectively. Consequently,  as $\eta \rightarrow 0$, we get $I^{\eta}_{D_2} \rightarrow 0$ for all nonnegative $\phi \in C_c^{\infty}([0,T) \times \R)$. This implies that the limit $\boldsymbol{u}$ is indeed the entropy solution for \eqref{PDE_NL_k_linearL}--\eqref{intL} in the sense of \cite[Def.~3.1]{HR2019}. The uniqueness of the entropy solutions to the weakly coupled system \eqref{PDE_NL_k_linearL}--\eqref{intL}(see \cite[Thm.~3.2]{HR2019}) implies that, in fact the entire sequence {\color{black}$\boldsymbol{u}^{\eta}\circledast \omega_{\eta}$} converges to the entropy solution $\boldsymbol{u}$ of \eqref{PDE_NL_k_linearL}--\eqref{intL}.

{\color{black} Note that, since $\boldsymbol{u}^{\eta} \in [0,1]^N $ (cf.~Cor.~\ref{tC}), the family $ \{ \boldsymbol{u}^{\eta} \} $ is pre-compact in $ (L^{\infty}([0,T] \times \mathbb{R}))^N$ endowed with the $\text{weak}^*$ topology.  
We can now use the properties of convolution, as in \cite[Thm.~1.3, Step 3, p.~16]{CCMS2023}, to show that any subsequential limit of the family $ \{ \boldsymbol{u}^{\eta} \} $ in the $\text{weak}^*$ topology
is the entropy solution of \eqref{PDE_NL_k_linearL}--\eqref{intL}. Hence, $ \boldsymbol{u}^{\eta} \xrightarrow{*} \boldsymbol{u} $ weakly$^*$ in $ (L^{\infty}([0,T] \times \mathbb{R}))^N $, completing the proof.}
\end{proof}}


\section{A relative entropy estimate and convergence rate}\label{sec:error}
{\color{black} The aim of this section is to establish the $L^1$ difference between the numerical approximation $\boldsymbol{u}_{\Delta}$ and the entropy solution $\boldsymbol{u}$. This will be achieved by  estimating each term of the right-hand side of \eqref{est:kuz}, in particular $\Lambda^k_{\epsilon,\epsilon_0} (u^{k}_{\Delta},u^k)$. Further, we introduce some notations, similar to that of \cite{HR2015,AHV2023,GTV2022}, {\color{black} in order to try to keep the notations slightly simpler for the upcoming proofs.}} For $(i,n,k)\in \Z \times \mathcal{N}_T\times \mathcal{N},\alpha \in \R$, and $(t,x)\in Q_T$, we define
\begin{enumerate}[(i)]
    \item $\eta_{i}^{k,n}(\alpha):=\abs{u_i^{k,n}-\alpha}$,
    \item $p_i^{k,n}(\alpha):=G(u_i^{k,n},\alpha)=\sgn(u_i^{k,n}-\alpha) (f^k(u_i^{k,n})-f^k(\alpha))$,
    \item $
\mathcal{U}^{k}_{\Delta}(t,x)
    := \nu^k(x,\omega_{\eta}*u^{k}_{\Delta}(t))(x)$,
    \item 
${\color{black}\mathcal{R}}_{\boldsymbol{u},\Delta}^{k}(t,x)
:=R^k(x,\boldsymbol{u}_{\Delta}(t,x),(\boldsymbol{u}_{\Delta} \circledast \omega_{\eta})(t,x))$.
\end{enumerate}
\begin{lemma}\label{Lem:est_Lambda}
For $k \in \mathcal{N}$, 
\begin{align}\label{est:Lambda}
-\Lambda^k_{\epsilon,\epsilon_0} (u^{k}_{\Delta},u^k)\leq \mathcal{L}\left(\frac{\D x}{\epsilon}+\frac{\D t}{\epsilon_0}+ \D t\right),
\end{align}
where $\mathcal{L}$ depends on {\color{black}$T,N, \norma{D_1 \nu^k}_{L^{\infty}(\R \times[0,1])},\norma{D_2 \nu^k}_{L^{\infty}(\R \times[0,1])},\norma{D_{11} \nu^k}_{L^{\infty}(\R \times [0,1])}$, $\norma{D_{12} \nu^k}_{L^{\infty}(\R \times [0,1])}$, $\norma{D_{2} \nu^k}_{L^{\infty}(\R \times [0,1])},\norma{D_{2 1} \nu^k}_{L^{\infty}(\R \times [0,1])},\norma{D_{2 2} \nu^k}_{L^{\infty}(\R \times [0,1])},\abs{\boldsymbol{f}}_{(\lip(\R))^{N}}$, $\abs{\omega_{\eta}}_{\BV(\R)}$, $\norma{\boldsymbol{u}_0}_{(L^{1}(\R))^{N}}, \abs{\boldsymbol{u}_0}_{(\BV(\R))^{N}}$, and $\abs{\boldsymbol{S}}_{(L^{\infty}(\R;\lip({[0,1]^4})))^N}$.}
\end{lemma}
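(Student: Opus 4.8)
The plan is to insert the piecewise-constant numerical approximation $u^k_\Delta$ into the relative entropy functional, reduce each space-time integral to a sum over the grid cells $C_i^{k,n}$, and then match the resulting discrete expression against the discrete entropy inequality \eqref{apx:de}. First I would discretize: since $u^k_\Delta$ is constant on each $C_i^{k,n}$, the integrals defining $\Lambda^k_T(u^k_\Delta,\Phi(\cdot,\cdot,s,y),u^k(s,y))$ collapse onto cell sums, with $\int_{C^n}\Phi_t\,dt=\Theta_\epsilon(x-y)[\Theta_{\epsilon_0}(t^{n+1}-s)-\Theta_{\epsilon_0}(t^n-s)]$ and $\int_{C_i}\Phi_x\,dx$ producing the interface values $\Phi(\cdot,x_{i\pm1/2},\cdot,\cdot)$. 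Performing Abel summation in $n$ (against the terminal and initial boundary terms) and in $i$ then recreates exactly the combination $|u_i^{k,n+1}-\alpha|-|u_i^{k,n}-\alpha|+\lambda(\mathcal{G}^{k,n}_{i+1/2}-\mathcal{G}^{k,n}_{i-1/2})$ appearing on the left of \eqref{apx:de}, up to remainder terms.

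A key simplification is that the scheme's discrete convolution coincides with the exact one at the interfaces: a direct computation from \eqref{num:c_ip1/2} gives $c_{i+1/2}^{k,n}=(\omega_\eta*u^k_\Delta(t^n))(x_{i+1/2})$, so that $\mathcal{U}^k_\Delta(t^n,x_{i+1/2})=\nu^k(x_{i+1/2},c_{i+1/2}^{k,n})$ is precisely the numerical velocity. Consequently the continuous entropy flux evaluated at interfaces differs from the numerical entropy flux $\mathcal{G}^{k,n}_{i\pm1/2}$ only by the numerical viscosity (the $\beta$-term) and by the variation of $\mathcal{U}^k_\Delta$ and $\Phi$ across a cell. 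I would collect the remainders into four groups: (i) the viscosity and flux-consistency error; (ii) the error from replacing $\Phi$ by its cell average, controlled using $\int_\R|\Theta'_\epsilon|=1/\epsilon$ and $\int_\R|\Theta'_{\epsilon_0}|=1/\epsilon_0$ from \eqref{mol}; (iii) the reconciliation of the continuous derivative term $\sgn(u^k_\Delta-\alpha)f^k(\alpha)\mathcal{U}^k_{\Delta,x}$ with the discrete difference $\nu^k(x_{i+1/2},c_{i+1/2}^{k,n})-\nu^k(x_{i-1/2},c_{i-1/2}^{k,n})$; and (iv) the source-term error $\mathcal{R}^k_{\boldsymbol{u},\Delta}$ versus $\Delta t\,\mathcal{R}_i^{k,n}$, handled through the Lipschitz bound \eqref{est:Lip_S} on $S^k$.

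With the discrete combination identified, I would apply \eqref{apx:de}, which bounds it above by $\Delta t\,\sgn(u_i^{k,n+1}-\alpha)\mathcal{R}_i^{k,n}$; this converts the sum into a lower bound for $\Lambda^k_{\epsilon,\epsilon_0}(u^k_\Delta,u^k)$, i.e.\ an upper bound for $-\Lambda^k_{\epsilon,\epsilon_0}(u^k_\Delta,u^k)$ by the four remainder groups. Each group is then estimated to the announced orders $\Delta x/\epsilon$, $\Delta t/\epsilon_0$, or $\Delta t$, using the total-variation bound of Lemma~\ref{lem:stability}(c), the time-Lipschitz estimate of Lemma~\ref{lem:stability}(d), the $L^\infty$- and $\BV$-in-space bounds on $\tilde{\mathcal{U}}^k_\Delta$ and $\tilde{\mathcal{U}}^k_{\Delta,x}$ (the discrete analogues of \eqref{VxLinf}, \eqref{VxBV}, \eqref{V_xBV}) together with the temporal continuity \eqref{U_xT}, and the regularity of $\nu^k,f^k,S^k,\omega_\eta$. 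Gathering the constants into $\mathcal{L}$ yields \eqref{est:Lambda}.

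The hard part will be group (iii), exactly as in the proof of Lemma~\ref{lemma:kuz}: because the one-sided kernel is only $W^{2,\infty}\cap\BV$, the function $\tilde{\mathcal{U}}^k_{\Delta,x}$ is not Lipschitz in space, so the discrepancy between the pointwise derivative $\mathcal{U}^k_{\Delta,x}(t,x)$ and the interface difference $\nu^k(x_{i+1/2},c_{i+1/2}^{k,n})-\nu^k(x_{i-1/2},c_{i-1/2}^{k,n})$ cannot be dismissed by a naive mean-value estimate. The remedy is to write the interface difference exactly as $\int_{x_{i-1/2}}^{x_{i+1/2}}\mathcal{U}^k_{\Delta,x}(t^n,x)\,dx$ — legitimate since $\mathcal{U}^k_\Delta(t^n,\cdot)$ is Lipschitz (its derivative is bounded by the discrete analogue of \eqref{VxLinf}) — and then to control the residual through the uniform spatial $L^\infty$ and $\BV$ bounds on $\tilde{\mathcal{U}}^k_{\Delta,x}$ rather than a Lipschitz modulus, mirroring the alternative treatment of $I_{\mathcal{U}^k_x}$ used there.
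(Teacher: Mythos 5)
Your treatment of the convective part is sound and in fact more explicit than the paper's: the paper writes the cell-sum form of $-\Lambda^k_{\epsilon,\epsilon_0}(u^k_\Delta,u^k)$ and then delegates, citing the arguments of \cite{AHV2023} together with the discrete entropy inequality \eqref{apx:de} to obtain the $\mathcal{L}_0\left(\Delta x/\epsilon+\Delta t/\epsilon_0\right)$ contribution, so your groups (i)--(iii) are a legitimate unpacking of that citation. In particular, your identity $c_{i+1/2}^{k,n}=(u^k_\Delta(t^n)*\omega_\eta)(x_{i+1/2})$ is correct (immediate from \eqref{num:c_ip1/2}, the convention \eqref{con}, and $\eta=N_\eta\Delta x$), and your remedy for the non-Lipschitz $\tilde{\mathcal{U}}^k_{\Delta,x}$ --- writing $\nu^k(x_{i+1/2},c^{k,n}_{i+1/2})-\nu^k(x_{i-1/2},c^{k,n}_{i-1/2})$ exactly as $\int_{x_{i-1/2}}^{x_{i+1/2}}\mathcal{U}^k_{\Delta,x}(t^n,x)\,\d x$ and controlling residuals by the $L^\infty$ and $\BV$ bounds on $\tilde{\mathcal{U}}^k_{\Delta,x}$ rather than a Lipschitz modulus --- is precisely the adaptation this paper makes for one-sided kernels in Lemma~\ref{lemma:kuz}.

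The genuine gap is in your group (iv), which is exactly the new content of this lemma (the paper's error term $\mathcal{E}$). After applying \eqref{apx:de} with $\alpha=u^k(s,y)$, the source enters as $\Delta t\,\sgn(u_i^{k,n+1}-\alpha)\,\mathcal{R}_i^{k,n}\,\Phi(s,y,t^{n+1},x)$, whereas the functional carries $\sgn(u_i^{k,n}-\alpha)\,\mathcal{R}^k_{\boldsymbol{u},\Delta}(t,x)\,\Phi(s,y,t,x)$. The Lipschitz bound \eqref{est:Lip_S} controls only $\mathcal{R}^k_{\boldsymbol{u},\Delta}(t,x)-\mathcal{R}_i^{k,n}$ (the paper's $\mathcal{E}_2$, estimated via Lemma~\ref{lem:stability}\ref{lem:TE}, assumption \ref{A3}, and \eqref{TV_c}) and, with the mollifier bound $\int_\R\abs{\Theta'_{\epsilon_0}}=1/\epsilon_0$, the shift $\Phi(s,y,t,x)-\Phi(s,y,t^{n+1},x)$ (the paper's $\mathcal{E}_1$). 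It says nothing about the sign mismatch $\bigl(\sgn(u_i^{k,n}-\alpha)-\sgn(u_i^{k,n+1}-\alpha)\bigr)\mathcal{R}_i^{k,n}$: since $\sgn$ is discontinuous, this difference equals $\pm 2$ whenever $u^k(s,y)$ lies between $u_i^{k,n}$ and $u_i^{k,n+1}$, a set of $(s,y)$ whose $\Phi$-mass is not small, and the naive estimate yields only $2T\sup_n \Delta x\sum_i\abs{\mathcal{R}_i^{k,n}}=\mathcal{O}(1)$, not $\mathcal{O}(\Delta t)$; no pointwise Lipschitz argument can repair this. The paper's term $\mathcal{E}_3$ resolves it by a \emph{second} summation by parts in $n$, applied to the source contribution itself, trading the difference of signs for the difference $\mathcal{R}_i^{k,n}-\mathcal{R}_i^{k,n-1}$ --- small by the time-Lipschitz estimate and \eqref{TV_c}, giving $\mathcal{E}_{31}=\mathcal{O}(\Delta t)$ --- plus the shift $\Phi(s,y,t^{n+1},x)-\Phi(s,y,t^{n},x)$, giving $\mathcal{E}_{32}=\mathcal{O}(\Delta t/\epsilon_0)$, plus two boundary terms $\mathcal{E}_{33},\mathcal{E}_{34}=\mathcal{O}(\Delta t)$. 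Your Abel summation is deployed only to reconstitute the discrete entropy combination $\abs{u_i^{k,n+1}-\alpha}-\abs{u_i^{k,n}-\alpha}+\lambda(\mathcal{G}^{k,n}_{i+1/2}-\mathcal{G}^{k,n}_{i-1/2})$; without this additional summation by parts on the source term, the claimed bound \eqref{est:Lambda} does not follow.
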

\begin{proof}
{\color{black}Write $\sum\limits_{i,n}=\sum\limits_{i\in \Z}\sum\limits_{n=0}^{N_T-1}$}. For the piecewise constant function $u^{k}_{\D}$ (cf.~\eqref{apx}--\eqref{MF}), the relative entropy can be written as
{\allowdisplaybreaks\begin{align*}
-\Lambda^k_{\epsilon,\epsilon_0}(u^{k}_{\Delta},u^k) 
 &= -\int_{Q_T}\sum_{i,n}   \int_{C_i^{k,n}}\eta_{i}^{k,n}(u^k(s,y))\Phi_t(s,y,t,x) \d t  \d x \d s \d y \\
 &\quad-\int_{Q_T}\sum_{i,n}   \int_{C_i^{k,n}} p_i^{k,n}(u^k(s,y))\mathcal{U}^{k}_{\Delta}(t,x)\Phi_x(s,y,t,x) \d t  \d x \d s \d y \\
 &\quad + \int_{Q_T}\sum_{i,n}  \int_{C_i^{k,n}}\sgn(u_i^{k,n}-u^k(s,y))\\
 &\qquad\qquad\qquad \times f^k(u^k(s,y))\partial_x\mathcal{U}^{k}_{\Delta}(t,x)\Phi(s,y,t,x) \d t  \d x \d s \d y \\
    &\quad- \int_{Q_T}\sum_{i\in\Z}   \int_{C_i}  \eta_{i}^{k,0}(u^k(s,y)) \Phi(s,y,0,x)  \d x\d s  \d y \\
    &\quad+\int_{Q_T} \sum_{i\in\Z}  \int_{C_i}  \eta_{i}^{k,N}(u^k(s,y))  \Phi(s,y,T,x)  \d x\d s  \d y
 \d t\\
    &\quad +\int_{Q_T}\displaystyle\sum_{i,n}  \int_{C_i^{k,n}}\sgn(u_i^{k,n}-u^k(s,y))\\
 &\qquad\qquad\qquad \times {\color{black}\mathcal{R}}_{\boldsymbol{u},\Delta}^{k}(t,x) \Phi(s,y,t,x) \d x\d s  \d y\d t.
    \end{align*}}
Here ${\color{black}\mathcal{R}}_{\boldsymbol{u},\Delta}^{k}(t,x)$ denotes the numerical approximation of ${\color{black}\mathcal{R}}_{\boldsymbol{u}}^{k}(t,x)$, i.e., by replacing $\boldsymbol{u}$ with $\boldsymbol{u}_\Delta$ in ${\color{black}\mathcal{R}}_{\boldsymbol{u}}^{k}$.
Repeat the arguments as in \cite[Lemma~3.3]{AHV2023}, with the discrete entropy inequality \eqref{apx:de} to get:
\begin{align}\label{Lam}
-\Lambda^k_{\epsilon,\epsilon_0}(u^{k}_{\Delta},u) \leq &\mathcal{L}_0 \left( \frac{\D x}{\epsilon}+ \frac{\D t}{\epsilon_0} \right) + \mathcal{E},
\end{align}
where 
$\mathcal{L}_0$  depends on {\color{black}$N,\abs{\boldsymbol{S}}_{(L^{\infty}(\R;\lip({[0,1]^4})))^N},\abs{\omega_{\eta}}_{\BV(\R)},\norma{\boldsymbol{u}_0}_{(L^1(\R))^N},\abs{f^k}_{\lip(\R)},$ \\ $\norma{D_1 \nu^k}_{L^{\infty}(\R \times[0,1])}$, $\norma{D_2 \nu^k}_{L^{\infty}(\R \times[0,1])},\norma{D_{11} \nu^k}_{L^{\infty}(\R \times [0,1])}, \norma{D_{12} \nu^k}_{L^{\infty}(\R \times [0,1])}$, \\ $\norma{D_{2} \nu^k}_{L^{\infty}(\R \times [0,1])},\norma{D_{2 1} \nu^k}_{L^{\infty}(\R \times [0,1])}$, and $\norma{D_{2 2} \nu^k}_{L^{\infty}(\R \times [0,1])}$}. Further, 
{\allowdisplaybreaks \begin{align*}
    \mathcal{E}&:= \int_{Q_T}\sum_{i,n}  \int_{C_i^{k,n}}\sgn(u_i^{k,n}-u^{k}(s,y)){\color{black}\mathcal{R}}_{\boldsymbol{u},\Delta}^{k}(t,x) \Phi(s,y,t,x)\d x \d s \d y \d t  
    \\ &\quad-\Delta t\int_{Q_T}\sum_{i,n}  \sgn(u_i^{k,n+1}-u^{k}(s,y))\int_{C_i}\mathcal{R}_i^{k,n}\Phi(s,y,t^{n+1},x)\d x \d s \d y \\
    &=\int_{Q_T}\sum_{i,n}  \int_{C_i^{k,n}}\Big(\sgn(u_i^{k,n}-u^{k}(s,y)){\color{black}\mathcal{R}}_{\boldsymbol{u},\Delta}^{k}(t,x) \Phi(s,y,t,x)
    \\&\qquad \qquad \qquad \qquad-\sgn(u_i^{k,n+1}-u^{k}(s,y))\mathcal{R}_i^{k,n}\Phi(s,y,t^{n+1},x)\Big)\d x \d s \d y \d t\\
     &=\int_{Q_T}\sum_{i,n}  \int_{C_i^{k,n}}\Big[\sgn(u_i^{k,n}-u^{k}(s,y)){\color{black}\mathcal{R}}_{\boldsymbol{u},\Delta}^{k}(t,x) \Phi(s,y,t,x) \\
     & \qquad \qquad \qquad \qquad -\sgn(u_i^{k,n}-u^{k}(s,y)){\color{black}\mathcal{R}}_{\boldsymbol{u},\Delta}^{k}(t,x) \Phi(s,y,t^{n+1},x)\\
     & \qquad \qquad \qquad \qquad + \sgn(u_i^{k,n}-u^{k}(s,y)){\color{black}\mathcal{R}}_{\boldsymbol{u},\Delta}^{k}(t,x) \Phi(s,y,t^{n+1},x)\\
     & \qquad \qquad \qquad \qquad -\sgn(u_i^{k,n}-u^{k}(s,y))\mathcal{R}_i^{k,n} \Phi(s,y,t^{n+1},x)\\
     & \qquad \qquad \qquad \qquad + \sgn(u_i^{k,n}-u^{k}(s,y))\mathcal{R}_i^{k,n} \Phi(s,y,t^{n+1},x)
    \\&\qquad \qquad \qquad \qquad-\sgn(u_i^{k,n+1}-u^{k}(s,y))\mathcal{R}_i^{k,n}\Phi(s,y,t^{n+1},x)\Big]\d x \d s \d y \d t.
\end{align*}}
{\color{black}Now, we aim to show
\begin{align}\label{E}
    \mathcal{E} \leq \mathcal{L}_4 \left( \D t +\frac{\D t} {\epsilon_0} \right),
\end{align} for a suitable $\mathcal{L}_4$ depending on {\color{black}$T,N, \norma{D_1 \nu^k}_{L^{\infty}(\R \times[0,1])},\norma{D_2 \nu^k}_{L^{\infty}(\R \times[0,1])},\norma{D_{11} \nu^k}_{L^{\infty}(\R \times [0,1])}$, $\norma{D_{12} \nu^k}_{L^{\infty}(\R \times [0,1])}$, $\norma{D_{2} \nu^k}_{L^{\infty}(\R \times [0,1])},\norma{D_{2 1} \nu^k}_{L^{\infty}(\R \times [0,1])},\norma{D_{2 2} \nu^k}_{L^{\infty}(\R \times [0,1])},\abs{\boldsymbol{f}}_{(\lip(\R))^{N}},$\\$ \norma{\boldsymbol{u}_0}_{(L^{1}(\R))^{N}}, \abs{\boldsymbol{u}_0}_{(\BV(\R))^{N}}$, $\abs{\boldsymbol{S}}_{(L^{\infty}(\R;\lip({[0,1]^4})))^N}$, and $\abs{\omega_{\eta}}_{\BV(\R)}$,} so that the lemma follows from \eqref{Lam} with $\mathcal{L}=\mathcal{L}_0+\mathcal{L}_4.$ To this end, we write $\mathcal{E}=\mathcal{E}_1+\mathcal{E}_2+\mathcal{E}_3$, where} 
{\allowdisplaybreaks
\begin{align*}
\mathcal{E}_1&=\int_{Q_T}\sum_{i,n}  \int_{C_i^{k,n}}
\sgn(u_i^{k,n}-u^{k}(s,y))  {\color{black}\mathcal{R}}_{\boldsymbol{u},\Delta}^{k}(t,x) \\
&\qquad\qquad\qquad\qquad\qquad\qquad \times( \Phi(s,y,t,x) - \Phi(s,y,t^{n+1},x) ) \d x \d s \d y \d t, \\[2mm]
 \mathcal{E}_2& = \int_{Q_T}\sum_{i,n}  \int_{C_i^{k,n}} \sgn(u_i^{k,n}-u^{k}(s,y))
( {\color{black}\mathcal{R}}_{\boldsymbol{u},\Delta}^{k}(t,x) - \mathcal{R}_i^{k,n} )\Phi(s,y,t^{n+1},x) \d x \d s \d y \d t,\\[2mm]
  \mathcal{E}_3& = \int_{Q_T}\sum_{i,n}  \int_{C_i^{k,n}} \mathcal{R}_i^{k,n}
(\sgn(u_i^{k,n}-u^{k}(s,y)) 
   -\sgn(u_i^{k,n+1}-u^{k}(s,y)))\\
   &\qquad\qquad\qquad\qquad\qquad\qquad \times\Phi(s,y,t^{n+1},x)\d x \d s \d y \d t.
 \end{align*}}
 {\color{black}{\color{black}From the assumed properties of the mollifier, see Section \ref{uni}}, note that,\begin{align*}
 \int_{Q_T} \left(\int_{C^{n}} \Phi(s,y,t,x_{i+1/2})\d t-\lambda\int_{C_i}\Phi(s,y,t^{n+1},x)\d x \right) \d s \d y&=\mathcal{O} \left(\frac{\D x^2}{\epsilon} +\frac{\D t^2}{\epsilon_0}\right),\\ 
\int_{Q_T}\left(\int_{C_{i+1}}\Phi(s,y,t^{n+1},x)\d x-\int_{C_i}\Phi(s,y,t^{n+1},x)\d x \right) \d s \d y&= \mathcal{O}\left( \frac{\D x^2}{\epsilon}\right),\\
\int_{0}^T\abs{\Theta_{\epsilon_0}(s-t)-\Theta_{\epsilon_0}(s-t^{n+1})} \d s \leq \D t \int_{0}^T\abs{\Theta'_{\epsilon_0}(s-\tau)} \d s &\leq  \D t/ \epsilon_0,\,\,t\in C^n.
\end{align*}}
Details of these estimates can be found in \cite[Sec.~3.3]{HR2015}. Consequently
\begin{align*}
    \mathcal{E}_1 \leq   \norma{\mathcal{R}_{\boldsymbol{u},\Delta}^{k}}_{L^1(Q_T)} \frac{\D t}{\epsilon_0} \leq 2 
\abs{\boldsymbol{S}}_{(L^{\infty}(\R;\lip({[0,1]^4})))^N}\norma{\boldsymbol{u_0}}_{(L^1(\R))^{N}}\frac{\D t}{\epsilon_0}.
\end{align*}
To estimate $\mathcal{E}_2$, we first note that,
for $(n,i)\in \mathcal{N}_T \times \Z$  and $(t,x)\in C_i^{k,n}$, 
\begin{align*}
\left|{\color{black}\mathcal{R}}_{\boldsymbol{u},\Delta}^{k}(t,x)-\mathcal{R}_i^{k,n}\right|
&\le \left|{\color{black}\mathcal{R}}_{\boldsymbol{u},\Delta}^{k}(t,x)-{\color{black}\mathcal{R}}_{\boldsymbol{u},\Delta}^{k}(t^n,x)\right| +\left|{\color{black}\mathcal{R}}_{\boldsymbol{u},\Delta}^{k}(t^n,x)-{\color{black}\mathcal{R}}_{\boldsymbol{u},\Delta}^{k}(t,x_i)\right| \\
&\quad+\left|{\color{black}\mathcal{R}}_{\boldsymbol{u},\Delta}^{k}(t,x_i)-\mathcal{R}_i^{k,n}\right|.
\end{align*}
In what follows, we estimate the summands on the RHS of the above inequality for any $t\in C^{k,n}$.
\begin{align*}
\begin{split}
     \sum_{i\in \Z} &\int_{C_i} {\color{black}\abs{{\color{black}\mathcal{R}}_{\boldsymbol{u},\Delta}^{k}(t,x) -{\color{black}\mathcal{R}}_{\boldsymbol{u},\Delta}^{k}(t^n,x)}} \d x\\
   &\quad \leq 2 \abs{\boldsymbol{S}}_{(L^{\infty}(\R;\lip({[0,1]^4})))^N} \\
   &\qquad \times\sum_{i\in \Z} \int_{C_i} \sum_{l=k-1}^{k+1} 
   \abs{\nu^k(x,(u^{l,\D}*\omega_{\eta})(t,x))- \nu^k(x,(u^{l,\D}*\omega_{\eta})(t^n,x))}\d x
   \\
   & \quad \leq 2 \abs{\boldsymbol{S}}_{(L^{\infty}(\R;\lip({[0,1]^4})))^N} \norma{D_2\nu^k}_{L^\infty(\R\times [0,1])}
   \D t \sum_{l=k-1}^{k+1} {{\color{black}\mathcal{C}_{L}}}  \\
    & \quad =6\Delta t\abs{\boldsymbol{S}}_{(L^{\infty}(\R;\lip({[0,1]^4})))^N} \norma{D_2\nu^k}_{L^\infty(\R\times [0,1])}
   {\color{black}\mathcal{C}_{L}},
   \end{split}
   \end{align*}
   where the last inequality follows from the time estimate for $u^{k}_{\D}$ 
   (cf.~Lemma~\ref{lem:stability}\ref{lem:TE}). Further,
    {\allowdisplaybreaks
  \begin{align*}
   \sum_{i\in \Z} \int_{C_i} &\abs{\mathcal{R}_{\boldsymbol{u},\Delta}^{k}(t^n,x) -\mathcal{R}_{\boldsymbol{u},\Delta}^{k}}(t^n,x_i) \d x
   \\
   & \leq 2 \abs{\boldsymbol{S}}_{(L^{\infty}(\R;\lip({[0,1]^4})))^N} \\
   &\qquad \times\sum_{i\in \Z} \int_{C_i} \sum_{l=k-1}^{k+1} 
     \abs{\nu^k(x,(u^{l,\D}*\omega_{\eta})(t^n,x))- \nu^k(x_i,(u^{l,\D}*\omega_{\eta})(t^n,x_i)}\d x\\
   & \leq 2 \abs{\boldsymbol{S}}_{(L^{\infty}(\R;\lip({[0,1]^4})))^N}  \D x \\
   &\qquad \times\left(3\abs{a_{0}}_{\BV(\R)}+\norma{D_2\nu^k}_{L^\infty(\R\times [0,1])}\abs{\omega_{\eta}}_{\BV(\R)}\sum_{l=k-1}^{k+1} \norma{u^{l,\D}(t^n)}_{L^1(\R)} \right)  \\
    & \leq 2 \abs{\boldsymbol{S}}_{(L^{\infty}(\R;\lip({[0,1]^4})))^N}  \D x \\
    &\qquad\times\left(3\abs{a_{0}}_{\BV(\R)}+\norma{D_2\nu^k}_{L^\infty(\R\times [0,1])}\abs{\omega_{\eta}}_{\BV(\R)}\norma{\boldsymbol{u}^0}_{(L^{1}(\R))^{N}} \right),\\[2mm]
     \sum_{i\in \Z} \int_{C_i}&\Big( \abs{\mathcal{R}_{\boldsymbol{u},\Delta}^{k}}(t^n,x_i) - \mathcal{R}_i^{k,n}\Big)\d x \\
     & \leq 2\D x\abs{\boldsymbol{S}}_{(L^{\infty}(\R;\lip({[0,1]^4})))^N} \sum_{l=k-1}^{k+1}  \sum\limits_{i\in \Z} \abs{\nu^k(x_i,u^{l,\D}*\omega_{\eta})-\nu^k(x_i,C_i^{k,n})}\\
     & \leq 2 \D x\norma{D_2\nu^k}_{L^\infty(\R\times [0,1])}\abs{\boldsymbol{S}}_{(L^{\infty}(\R;\lip({[0,1]^4})))^N} \\&\qquad\times\sum_{l=k-1}^{k+1}  \sum\limits_{i,p\in \Z} u_p^{l,n} \int_{\mathcal{C}_p} \abs{\omega_{\eta}({\color{black}y-x_i})-\omega_{\eta}({x_p-x_i})} \d y\\
  &  \leq 6\abs{\omega_{\eta}}_{\BV(\R)} \norma{D_2\nu^k}_{L^\infty(\R\times [0,1])}\sum_{l=k-1}^{k+1} \norma{u^{l,\D}(t^n)}_{L^{1}(\R)} \D x  
  \\
   & \leq 6\abs{\omega_{\eta}}_{\BV(\R)} \norma{D_2\nu^k}_{L^\infty(\R\times [0,1])}\norma{\boldsymbol{u}^0}_{(L^1(\R))^{N}} \D x,
\end{align*}}
which implies, 
 {\color{black}using the CFL condition\eqref{CFL}},
\begin{align}
\sum_{i\in \Z}\int_{C_i} \left|{\color{black}\mathcal{R}}_{\boldsymbol{u},\Delta}^{k}(t,x)-\mathcal{R}_i^{k,n}\right| \d x \leq  \mathcal{L}_1 \D t, 
\end{align}
where, 
\begin{align*}
    \mathcal{L}_1 &= 6\abs{\boldsymbol{S}}_{(L^{\infty}(\R;\lip({[0,1]^4})))^N}
   {\color{black}\mathcal{C}_{L}}+2 \abs{\boldsymbol{S}}_{(L^{\infty}(\R;\lip({[0,1]^4})))^N} \abs{\omega_{\eta}}_{\BV(\R)} \lambda  \norma{\boldsymbol{u}^0}_{(L^{1}(\R))^{N}} \\
   &\quad+ \lambda\abs{\omega_{\eta}}_{\BV(\R)} \norma{\boldsymbol{u}^0}_{(L^1(\R))^{N}},
\end{align*}
and hence 
$\mathcal{E}_2 \leq \mathcal{L}_1T \D t$. Now, we consider the term $\mathcal{E}_3$ which can be rewritten as
{\allowdisplaybreaks
\begin{align*}
\mathcal{E}_3 &=
\int_{Q_T} \sum_{i\in\Z}\sum_{n=1}^{N_T}\int_{C_i^{k,n}}\sgn(u_i^{k,n}-u^k(s,y))\mathcal{R}_i^{k,n}
\Phi(s,y,t^{n+1},x)  \d x\d s  \d y \d t\\
& \quad - \int_{Q_T} \sum_{i\in\Z}\sum_{n=1}^{N_T}\int_{C_i^{k,n}}\sgn(u_i^{k,n}-u^k(s,y))\mathcal{R}_i^{k,n-1}
\Phi(s,y,t^{n},x)  \d x\d s  \d y \d t\\
&\quad+ \int_{Q_T} \sum_{i\in\Z}\int_{C_i^{k,n}}\sgn(u_i^{k,0}-u^k(s,y))\mathcal{R}_i^{k,0}
\Phi(s,y,t^1,x)  \d x\d s  
\d y \d t\\
&\quad- \int_{Q_T} \sum_{i\in\Z}\int_{C_i^{k,n}} \sgn(u_i^{N}-u^k(s,y))\mathcal{R}_i^{k,n}
\Phi(s,y,t^{N+1},x)  \d x\d s  \d y \d t.
\end{align*}
}
Adding and subtracting the term
we have,
{\allowdisplaybreaks
\begin{align*}
\mathcal{E}_3&=
\int_{Q_T} \sum_{i\in\Z}\sum_{n=1}^{N_T}\int_{C_i^{k,n}}\sgn(u_i^{k,n}-u^k(s,y))\Big(\mathcal{R}_i^{k,n}-\mathcal{R}_i^{k,n-1}\Big)\Phi(s,y,t^{n+1},x) \d x\d s  \d y\\
&\quad+ \int_{Q_T} \sum_{i\in\Z}\sum_{n=1}^{N_T}\int_{C_i^{k,n}}\sgn(u_i^{k,n}-u^k(s,y))\mathcal{R}_i^{k,n-1}\\
&\qquad\qquad \qquad\qquad\qquad\qquad\times
(\Phi(s,y,t^{n+1},x)-\Phi(s,y,t^{n},x))  \d x  \d s \d y\\
&\quad+ \int_{Q_T}  \sum_{i\in\Z}\int_{C_i^{k,n}}\sgn(u_i^{k,0}-u^k(s,y))\mathcal{R}_i^{k,0}
\Phi(s,y,t^1,x)  \d x \d s \d y\\
&\quad- \int_{Q_T}  \sum_{i\in\Z}\int_{C_i^{k,n}}\sgn(u_i^{k,N_T}-u^k(s,y))\mathcal{R}_i^{k,n}
\Phi(s,y,t^{n+1},x) \d x \d s \d y\\
&:=\mathcal{E}_{31}+\mathcal{E}_{32} +\mathcal{E}_{33} +\mathcal{E}_{34}.
\end{align*}}
Note that
\begin{align}
\begin{split}
\label{TER}\abs{\mathcal{R}_i^{k,n}-\mathcal{R}_i^{k,n-1}} & \leq 2 \abs{\boldsymbol{S}}_{(L^{\infty}(\R;\lip({[0,1]^4})))^N}\bigg( \abs{u^{k-1,n}_i-u^{k-1,n-1}_i}+ \abs{u^{k,n}_i-u^{k,n-1}_i} \\
  & \quad  + \abs{u^{k+1,n}_i-u^{k+1,n-1}_i} +\norma{D_2\nu^k}_{L^\infty(\R\times [0,1])}\abs{c^{k,n}_i-c^{k,n-1}_i}\\
&\quad+\norma{D_2\nu^k}_{L^\infty(\R\times [0,1])}\abs{c^{k-1,n}_i-c^{k-1,n-1}_i}\\
&\quad+\norma{D_2\nu^k}_{L^\infty(\R\times [0,1])}\abs{c^{k+1,n}_i-c^{k+1,n-1}_i} \bigg).
\end{split}
\end{align}
Hence, using \eqref{TV_c} and the time estimate (cf.~Lemma~\ref{lem:stability}\ref{lem:TE}) in \eqref{TER}, we have,
\begin{align*}
    \sum_{i\in\Z} \abs{\mathcal{R}_i^{k,n}-\mathcal{R}_i^{k,n-1}} & \leq 24  {\color{black}\mathcal{C}_{L}}  \lambda \abs{\boldsymbol{S}}_{(L^{\infty}(\R;\lip({[0,1]^4})))^N}
\end{align*}
and consequently,
\begin{align*}
    \mathcal{E}_{31} \leq 24 \abs{\boldsymbol{S}}_{(L^{\infty}(\R;\lip({[0,1]^4})))^N} {\color{black}\mathcal{C}_{L}} T \D t.
\end{align*}
Now, we consider
{\allowdisplaybreaks\begin{align*}
   \mathcal{E}_{32}
   &=- \int_{Q_T}  \sum_{i\in\Z}\sum_{n=1}^{N_T}\int_{C_i^{k,n}}\sgn(u_i^{k,n}-u^k(s,y))\mathcal{R}_i^{k,n-1} \\
&\qquad\qquad \times\big(\Phi(s,y,t^{n},x)-\Phi(s,y,t^{n+1},x)\big) \d x  \d s  \d y \d t\\
&\le \int_{0}^T  \sum_{i\in\Z}\sum_{n=1}^{N_T} \int_{C_i^{k,n}}
\abs{\mathcal{R}_i^{k,n-1}}
(\Theta_{\epsilon_0}(s-t^n)-\Theta_{\epsilon_0}(s-t^{n+1}))  \d s  \d y \d t \\
& \leq \D x  \D t \sum_{i,n} \abs{\mathcal{R}_i^{k,n-1}}   \frac{ \D t}{\epsilon_0} \\
&\leq  2T \abs{\boldsymbol{S}}_{(L^{\infty}(\R;\lip({[0,1]^4})))^N} (1+\norma{D_2\nu^k}_{L^\infty(\R\times [0,1])})\norma{\boldsymbol{u}_0}_{(L^1(\R))^{N}}  \frac{ \D t}{\epsilon_0} \\
& :=\mathcal{L}_2\frac{\Delta t}{\epsilon_0}.
\end{align*}}

Finally, we estimate on the remaining boundary terms $\mathcal{E}_{33}$ and $\mathcal{E}_{34}$.  Specifically,
\begin{align*}
      \mathcal{E}_{33}&= \int_{Q_T} \sum_{i\in\Z}\sgn(u_i^{k,0}-u^k(s,y))\mathcal{R}_i^{k,0}
\int_{C_i^{k,n}}\Phi(s,y,t^1,x)  \d x\d s  \d y \\
&\le  \D x \sum_{i\in\Z}\abs{\mathcal{R}_i^{k,0}}  \Delta t \\
&\leq 4 \abs{\boldsymbol{S}}_{(L^{\infty}(\R;\lip({[0,1]^4})))^N}  \norma{\boldsymbol{u}_0}_{(L^1(\R))^{N}} \D t\\ 
&:=\mathcal{L}_3\D t.
\end{align*}
Similarly,
\begin{align}\mathcal{E}_{34}
\le \D x \sum_{i\in\Z}\abs{\mathcal{R}_i^{k,N_T}} \Delta t\le \mathcal{L}_3 \D t.
\end{align}
Collecting the estimates on $\mathcal{E}_1,\mathcal{E}_2$ and $\mathcal{E}_3$, we have
\begin{align*}
    \mathcal{E} \leq \mathcal{L}_4 \left( \D t +\frac{\D t} {\epsilon_0} \right),
\end{align*}
where
\begin{align*}
\mathcal{L}_4&= 2 \abs{\boldsymbol{S}}_{(L^{\infty}(\R;\lip({[0,1]^4})))^N} \norma{\boldsymbol{u_0}}_{(L^1(\R))^{N}}+\mathcal{L}_1T\\&\quad+24  \abs{\boldsymbol{S}}_{(L^{\infty}(\R;\lip({[0,1]^4})))^N} {\color{black}\mathcal{C}_{L}} T+\mathcal{L}_2+2\mathcal{L}_3.
\end{align*}
Finally, the lemma follows from \eqref{Lam}--\eqref{E} with $\mathcal{L}=\mathcal{L}_0+\mathcal{L}_4.$
\end{proof}
\begin{theorem}\label{CR}
    Let $\boldsymbol{u}$ be the entropy solution and $\boldsymbol{u}_{\Delta}$ be the numerical approximation obtained via the marching formula \eqref{MF}. Then the following convergence rate estimate holds:
\begin{align}\label{rate}
    \norma{\boldsymbol{u}(T,\dott)-\boldsymbol{u}_{\Delta}(T,\dott)}_{(L^1(\R))^{N}} = \tilde{\mathcal{C}}_T(\D t)^{1/2},
\end{align}
where
$\tilde{\mathcal{C}}_T$ depends on {\color{black}$T,N, \norma{D_1 \nu^k}_{L^{\infty}(\R \times[0,1])},\norma{D_2 \nu^k}_{L^{\infty}(\R \times[0,1])},\norma{D_{11} \nu^k}_{L^{\infty}(\R \times [0,1])}$, $\norma{D_{12} \nu^k}_{L^{\infty}(\R \times [0,1])}$, $\norma{D_{2} \nu^k}_{L^{\infty}(\R \times [0,1])},\norma{D_{2 1} \nu^k}_{L^{\infty}(\R \times [0,1])},\norma{D_{2 2} \nu^k}_{L^{\infty}(\R \times [0,1])},\abs{\boldsymbol{f}}_{(\lip(\R))^{N}}$, $\norma{\boldsymbol{u}_0}_{(L^{1}(\R))^{N}}, \abs{\boldsymbol{u}_0}_{(\BV(\R))^{N}}$, $\abs{\boldsymbol{S}}_{(L^{\infty}(\R;\lip({[0,1]^4})))^N}$, and $\abs{\omega_{\eta}}_{\BV(\R)}$}, but is independent of $\Delta t.$
\end{theorem}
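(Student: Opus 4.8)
Convergence rate of $\tfrac12$ for the finite volume scheme.

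The plan is to combine the Kuznetsov-type lemma (Lemma~\ref{lemma:kuz}) with the relative-entropy estimate on the numerical approximation (Lemma~\ref{Lem:est_Lambda}), and then optimize the free mollification parameters $\epsilon,\epsilon_0$.

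First I would verify that the numerical approximation $\boldsymbol{u}_{\Delta}$ is an admissible competitor in Lemma~\ref{lemma:kuz}, i.e.\ that $\boldsymbol{u}_{\Delta}\in K$. This follows from the stability results of Lemma~\ref{lem:stability}: the invariant region principle (part~(a)) gives $\boldsymbol{u}_{\Delta}\in[0,1]^{N}$, conservation (part~(b)) gives the constant-$L^1$-norm requirement defining $K$, and the total variation bound (part~(c)) together with the time-continuity estimate (part~(d)) supply the finite $(L^\infty_t\BV_x)^N$ and $(\lip_t L^1_x)^N$ seminorms. Crucially, the constant $\mathcal{C}_T$ in Lemma~\ref{lemma:kuz} depends only on these uniform bounds, which are controlled independently of $\Delta t$ by Lemma~\ref{lem:stability}; this is what will ultimately make $\tilde{\mathcal{C}}_T$ independent of $\Delta t$.

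Next I would apply Lemma~\ref{lemma:kuz} with $\boldsymbol{v}=\boldsymbol{u}_{\Delta}$, so that the left-hand side becomes exactly $\norma{\boldsymbol{u}(T,\dott)-\boldsymbol{u}_{\Delta}(T,\dott)}_{(L^1(\R))^{N}}$. The right-hand side contains three types of terms: the relative-entropy functionals $-\Lambda^k_{\epsilon,\epsilon_0}(u^k_{\Delta},u^k)$, the time-modulus terms $\gamma(u^k_{\Delta},\epsilon_0)$, and the initial-data plus $N(\epsilon+\epsilon_0)$ contributions. Since the initial data is approximated by cell averages, the term $\norma{\boldsymbol{u}_0-(\boldsymbol{u}_{\Delta})_0}_{(L^1(\R))^N}$ is $\mathcal{O}(\Delta x)\,|\boldsymbol{u}_0|_{(\BV(\R))^N}$. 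The modulus of continuity $\gamma(u^k_{\Delta},\epsilon_0)$ is bounded by $\mathcal{C}_L\epsilon_0$ using the Lipschitz-in-time estimate of Lemma~\ref{lem:stability}\ref{lem:TE}. The key input is Lemma~\ref{Lem:est_Lambda}, which bounds each $-\Lambda^k_{\epsilon,\epsilon_0}(u^k_{\Delta},u^k)$ by $\mathcal{L}\big(\tfrac{\Delta x}{\epsilon}+\tfrac{\Delta t}{\epsilon_0}+\Delta t\big)$. Assembling these, and recalling $\Delta x=\Delta t/\lambda$ with $\lambda$ fixed by the CFL condition, the bound takes the schematic form
\begin{align*}
\norma{\boldsymbol{u}(T,\dott)-\boldsymbol{u}_{\Delta}(T,\dott)}_{(L^1(\R))^{N}}\le C\Big(\frac{\Delta t}{\epsilon}+\frac{\Delta t}{\epsilon_0}+\epsilon+\epsilon_0+\Delta t\Big),
\end{align*}
where $C$ collects all the $\Delta t$-independent constants from the two lemmas.

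Finally I would optimize in $\epsilon$ and $\epsilon_0$. Balancing $\tfrac{\Delta t}{\epsilon}$ against $\epsilon$ (and likewise for $\epsilon_0$) by choosing $\epsilon=\epsilon_0=\sqrt{\Delta t}$ makes each of these four terms $\mathcal{O}(\sqrt{\Delta t})$, while the residual $\Delta t$ term is of higher order. This yields the claimed rate $\tilde{\mathcal{C}}_T(\Delta t)^{1/2}$ with $\tilde{\mathcal{C}}_T$ independent of $\Delta t$. The main obstacle is not this optimization, which is routine, but rather ensuring that every constant fed into the final bound is genuinely uniform in the discretization parameters; the whole argument collapses if, for instance, the $\BV$ bound in Lemma~\ref{lem:stability}\ref{CON} or the time-continuity constant $\mathcal{C}_L$ were to degrade as $\Delta t\to0$. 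Since Lemma~\ref{lem:stability} establishes these bounds with constants depending only on the data, $\boldsymbol{\nu}$, $\boldsymbol{S}$, and $\omega_\eta$ (and in particular on $\abs{\omega_\eta}_{\BV(\R)}$ rather than on any higher regularity that the one-sided kernel lacks), this uniformity holds and the proof closes.
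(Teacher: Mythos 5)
Your proposal is correct and follows essentially the same route as the paper, whose proof simply combines the Kuznetsov-type estimate \eqref{est:kuz} with the relative-entropy bound \eqref{est:Lambda} and then invokes the balancing argument of \cite[Thm.~5.2]{AHV2023} --- precisely the choice $\epsilon=\epsilon_0=\sqrt{\Delta t}$ you spell out, with admissibility $\boldsymbol{u}_\Delta\in K$ and the $\Delta t$-uniformity of all constants supplied by Lemma~\ref{lem:stability} exactly as you argue. The only cosmetic difference is that for the piecewise-constant-in-time approximation one has $\gamma(u^k_\Delta,\epsilon_0)\le \mathcal{C}_L(\epsilon_0+\Delta t)$ rather than $\mathcal{C}_L\epsilon_0$, which does not affect the rate.
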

\begin{proof}
In view of the  Kuznetsov-type estimate
\eqref{est:kuz}  and the estimate on the relative entropy functional \eqref{est:Lambda}, the theorem follows by repeating the arguments of \cite[Thm.~5.2]{AHV2023}.
\end{proof}
{\color{black}
\begin{remark}\normalfont Theorem \ref{CR} demonstrates the rate of convergence being at least 1/2. This result should be regarded as a worst-case estimate, meaning the rate cannot fall below 1/2. An example from \cite{Sab1997} illustrates that, in general, this bound is optimal for local conservation laws, as a rate of 1/2 is achieved in that case. However, in many instances, the method exhibits rates higher than 1/2 {\color{black}(see Figure \ref{fig:table})}.
\end{remark}}
\section{A short note for a general system of nonlocal balance laws}\label{exten}
In this section, 
{\color{black}we briefly describe how to} extend our analysis  to a general system of balance laws \eqref{PDE_NL_kd} and \eqref{eq:u11A}, with 
$R^k$ having a very general form with dependence on all $N$ components of $\boldsymbol{u}$ and $(\tilde{\boldsymbol{\omega}_{\eta}}\circledast\boldsymbol{u})^k$ and $\nu^k$ having dependence on all components of $(\boldsymbol{\omega}_{\eta}\circledast\boldsymbol{u})^k$, making the system strongly coupled through the nonlocal convective part. 
\begin{definition}[Entropy Condition]\label{def:entropy1}
A function $\boldsymbol{u} \in ({\color{black}\operatorname{Lip}}([0,T];L^1(\R;[0,1]))\cap L^{\infty}([0,T]; \BV(\R)))^{N}$  is an entropy
 solution of IVP \eqref{PDE_NL_kd} and \eqref{eq:u11A}, 
if for all $k \in\mathcal{N}$, and for all $\alpha\in \R$, 
\begin{multline} \label{kruz21}
\int_{Q_T}\left|u^k(t,x)- \alpha\right|\phi_t(t,x)  \d{x} \d t  \\ 
+ \int_{Q_T}\sgn (u^k(t,x)-\alpha) \nu^k((\boldsymbol{\omega}_{\eta}\circledast\boldsymbol{u})^k(t,x))
(f^k(u^k(t,x))-f^k(\alpha))\phi_x(t,x) \d{x} \d{t}\\ 
 -\int_{Q_T} f^k(\alpha) (\sgn (u^k(t,x)-\alpha)) \partial_x\nu^k((\boldsymbol{\omega}_{\eta}\circledast\boldsymbol{u})^k(t,x))\phi(t,x)\d{x} \d{t}\\ 
+\int_{\R} \left|u_0^k(x)- \alpha \right|\phi(0,x)  \d{x} \geq \int_{Q_T} \sgn(u^k(t,x)-\alpha) {\color{black}\mathcal{R}}^k_{\boldsymbol{u}}(t,x)\phi(t,x) \d t \d{x}, 
\end{multline}
for all  $0\leq \phi\in C_c^{\infty}([0,T)\times \R)$, where
 ${\color{black}\mathcal{R}}^k_{\boldsymbol{u}}(t,x):= R^k(x,\boldsymbol{u}(t,x),(\boldsymbol{u} \circledast \tilde{\boldsymbol{\omega}_{\eta}})^k(t,x)))$, with $(t,x)\in Q_T$.
\end{definition}

The Kuznetsov-type lemma (cf.~Lemma~\ref{lemma:kuz}) and hence the uniqueness of the entropy solution (cf.~Theorem~\ref{uniqueness}) remain valid, and the proof follows by combining the estimates presented in \cite[Sec.~4]{AHV2023} and Section \ref{uni}. Following the notations in  Section \ref{exis}, the marching algorithm can be modified as
\begin{align*}
u_{i}^{k,n+1} &=\mathcal{H}^k({x_{i-1/2},x_i,x_{i+1/2}},u_{i-1}^{k,n},u_{i}^{k,n},u_{i+1}^{k,n},\boldsymbol{u}_{i}^{n},\boldsymbol{c}^{k,n}_{i-1/2},\boldsymbol{c}^{k,n}_{i+1/2},\boldsymbol{\tilde{c}}^{k,n}_{i})\\
&=u_{i}^{k,n}- \lambda\Big(\mathcal{F}^{k,n}_{i+1/2}({x_{i+1/2}},u_{i}^{k,n},u_{i+1}^{k,n})-\mathcal{F}^{k,n}_{i-1/2}({x_{i-1/2}},u_{i-1}^{k,n},u_{i}^{k,n}) \Big) + \D t \mathcal{R}_i^{k,n},
\end{align*}
where, for every $k\in\mathcal{N},i\in\Z,n\in\{0,1,\dots,N_T\}$, and for fixed $\alpha\in (0,2/3),$
{\allowdisplaybreaks
\begin{align*}
\mathcal{F}^{k,n}_{i+1/2}(a,b)&:= 
\frac{1}{2}\nu^{k}({x_{i+1/2}},\boldsymbol{c}^{k,n}_{i+1/2})\left( f^k(a)   +  f^k(b)\right) -
  \frac{\beta}{2\, \lambda}(b-a),\\[2mm]
\mathcal{R}^{k,n}_{i}&:= R^{k}({x_i},\boldsymbol{u}^{k,n}_{i}, \tilde{\boldsymbol{c}}^{k,n}_{i}),\\[2mm]
{c}_{i+1/2}^{k,n}&:=\sum\limits_{p=0}^{N_{\eta}-1}\zeta^k_{p+1/2}u_{i+p+1}^{k,n}, \\
\zeta_{p+1/2}^k
&:=\int_{p\Delta x}^{(p+1)\Delta x}\omega^k_{\eta}(y)\d y,\quad p\in\mathcal{N}_{\eta}:=\{0,1,\dots,N_{\eta}-1\},\\[2mm]
\tilde{c}_{i+1/2}^{k,n}&:= \sum\limits_{p=0}^{N_{\tilde{\eta}}-1}\tilde{\zeta}^k_{p+1/2}u_{i+p+1}^{k,n},\\
 \tilde{\zeta}^k_{p+1/2}&:=\int_{p\Delta x}^{(p+1)\Delta x}\tilde{\omega}^k_{\tilde{\eta}}(y)\d y,\quad p\in\mathcal{N}_{\tilde{\eta}}:=\{0,1,\dots,N_{\tilde{\eta}}-1\},\\[2mm]
{\boldsymbol{{z}}}^{k,n}_{i+1/2}&=({z}_{ i+1/2}^{k,n},\ldots, {z}_{i+1/2}^{k,n}), \quad z=c,\tilde{c},\\[2mm]
{\boldsymbol{z}}^{k,n}_{i}&=\frac{1}{2}({\boldsymbol{{z}}}^{k,n}_{i+1/2}+{\boldsymbol{{z}}}^{k,n}_{i-1/2}), \quad z=c,\tilde{c}.
\end{align*}}
The existence of the entropy solution can now be proved {\color{black} by establishing the convergence of the above finite volume approximation. 
The proof runs exactly on the similar lines of the Lemma~\ref{lem:stability}, but by 
 borrowing the estimates for the  coupled nonlocal terms in the convective part from \cite[Thm.~3.2]{AHV2023_1} and the estimates from the previous sections for the source part (cf.~Section \ref{uni}). 
 
Furthermore, a Kuznetsov-type lemma for the system \eqref{PDE_NL_kd} can be proved as in Lemma~\ref{lemma:kuz}, again borrowing the estimates for the  convective part (coupled through the nonlocal terms) from \cite[Lemma~4.1]{AHV2023_1} (see also \cite[Lemma~5.1]{AHV2023}) and the estimates for the source part from Lemma~\ref{lemma:kuz} (see pp.~16--17). Lemma~\ref{lemma:kuz} and Lemma~\ref{lem:stability} imply the well-posedness for the system \eqref{PDE_NL_kd}.

Finally, the estimates on the relative entropy can be obtained  as in Lemma~\ref{Lem:est_Lambda}, yet again borrowing the estimates for the convective part (coupled through the nonlocal terms) from \cite[Lemma~5.1]{AHV2023_1} (see also \cite[Lemma~3.3]{AHV2023}).

The above results imply a convergence rates estimates of the order $\mathcal{O}(\sqrt{\D t})$ (see Theorem~\ref{CR}) for the above numerical scheme.}
\section{Numerical Experiments}
\label{num}
We now present some numerical experiments to illustrate the theory presented in the previous section, and {\color{black}the ``nonlocal to local" dynamics of the system as the convolution radius $\eta\rightarrow0$}. Throughout the section, we choose $\beta=0.3333$, and $\lambda=0.1286$ to satisfy the CFL condition \eqref{CFL}. We consider a road
with two lanes, each with its own velocity function, with the second lane being the fastest.  It is also assumed that the lanes are homogeneous and the
traffic on the road is unidirectional. To this end, we consider the IVP \eqref{PDE_NL_k}, \eqref{eq:u11A} with $N=2$ with
$g^k(x)=1-x,$
\begin{align*}
  {\color{black} \nu^1(x)=1.5g^1(x), \nu^2(x)=2.5g^1(x),  \mu^1(x)=\mu^2(x)=L(\eta+x)\mathbbm{1}_{(-\eta,0)}(x),}
\end{align*}
for $x\in\R$. In addition, 
\begin{figure}[ht!]
 \centering
 \begin{subfigure}{.45\textwidth}
\includegraphics[width=\textwidth,keepaspectratio]{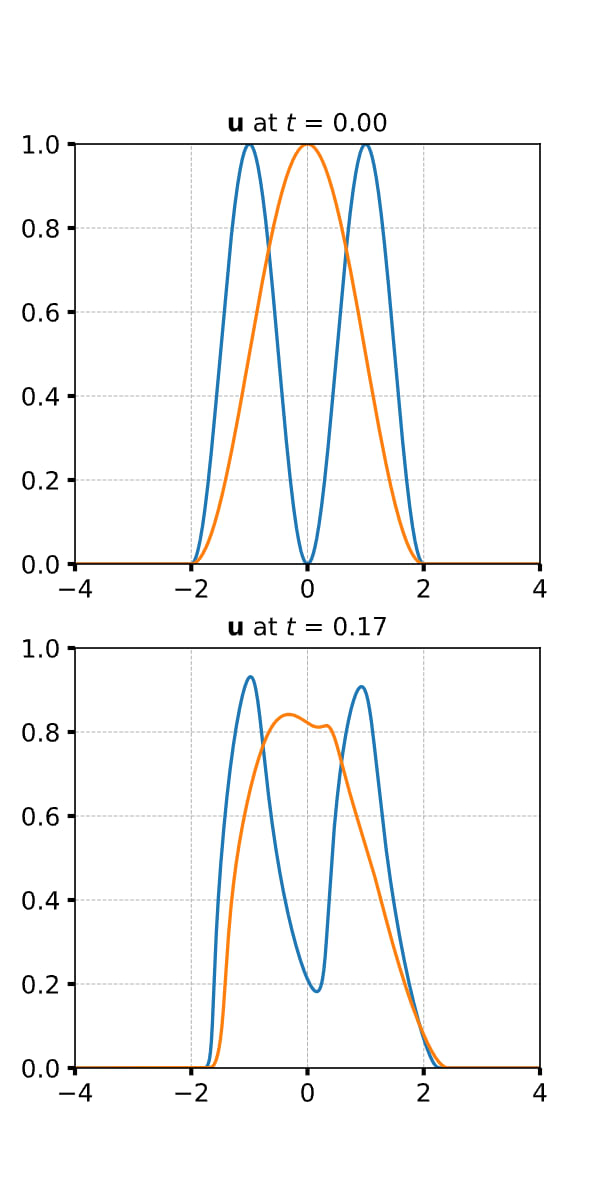}
\end{subfigure}
\begin{subfigure}{.45\textwidth}
\includegraphics[width=\textwidth,keepaspectratio]{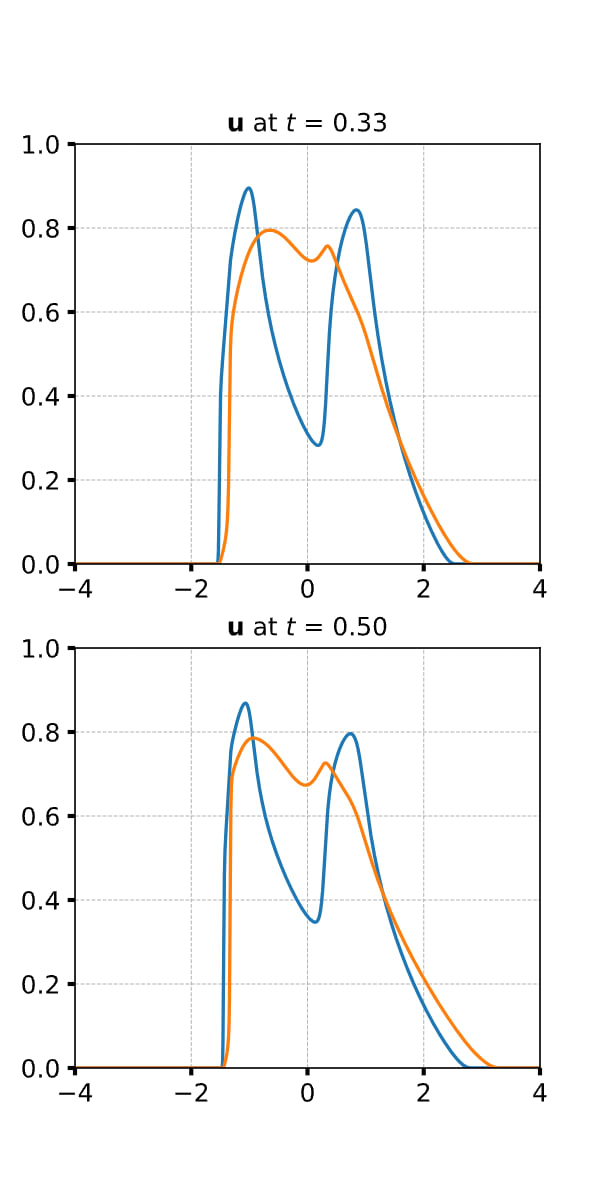}
\end{subfigure}
\hfill
\caption{Solution to the nonlocal conservation law~\eqref{PDE_NL_k}, \eqref{eq:ex1} on the domain $[-4,\,4]$ at times $t =
    0.00,\; 0.017,\;0.33, \: 0.5$, with mesh size $\Delta x=0.00078125$. $u^1$({\color{blue}\full}),\,\,$u^2$({\color{orange}\full}).}
  \label{fig:ex212}
\end{figure}$L$ is chosen such that $\int_{\R}\mu^k(x)\d{x}=1.$ This PDE fits the hypotheses of this article.
Further, the domain of integration is chosen to be the interval $[-4, 4]$ with $t\in[0, 0.5]$, and 
\begin{align}
    \label{eq:ex1} u^1_0(x)&=\sin^2(0.5\pi x)\mathbbm{1}_{(-2,2)}(x),\\ 
    u^2_0(x)&=\cos^2(0.25\pi x)\mathbbm{1}_{(-2,2)}(x),\,\,x\in\R. \notag
\end{align} 

Figure \ref{fig:ex212} displays the numerical approximations of \eqref{PDE_NL_k}, \eqref{eq:ex1} 
generated by the numerical scheme \eqref{apx}--\eqref{MF},
with $\Delta x=0.00078125$ and $\eta=80\Delta x=0.0625$. It can be seen that the numerical scheme is able to capture both shocks and rarefactions well, and that there is the expected change of lanes to the faster lane.
\begin{figure}[ht!]
 \centering
 \begin{subfigure}{.45\textwidth}
\includegraphics[width=\textwidth,keepaspectratio]{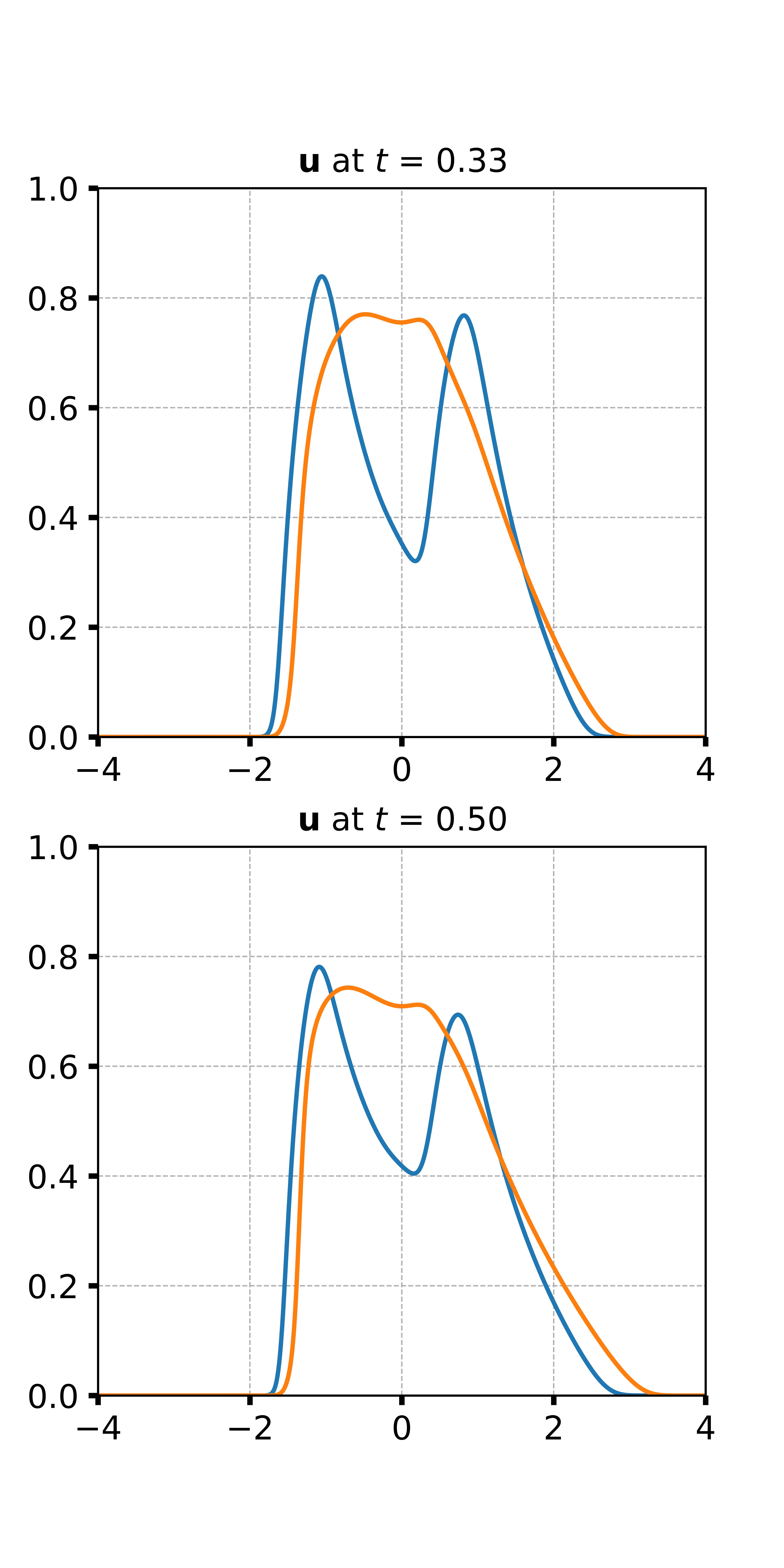}
\end{subfigure}
\begin{subfigure}{.45\textwidth}
\includegraphics[width=\textwidth,keepaspectratio]{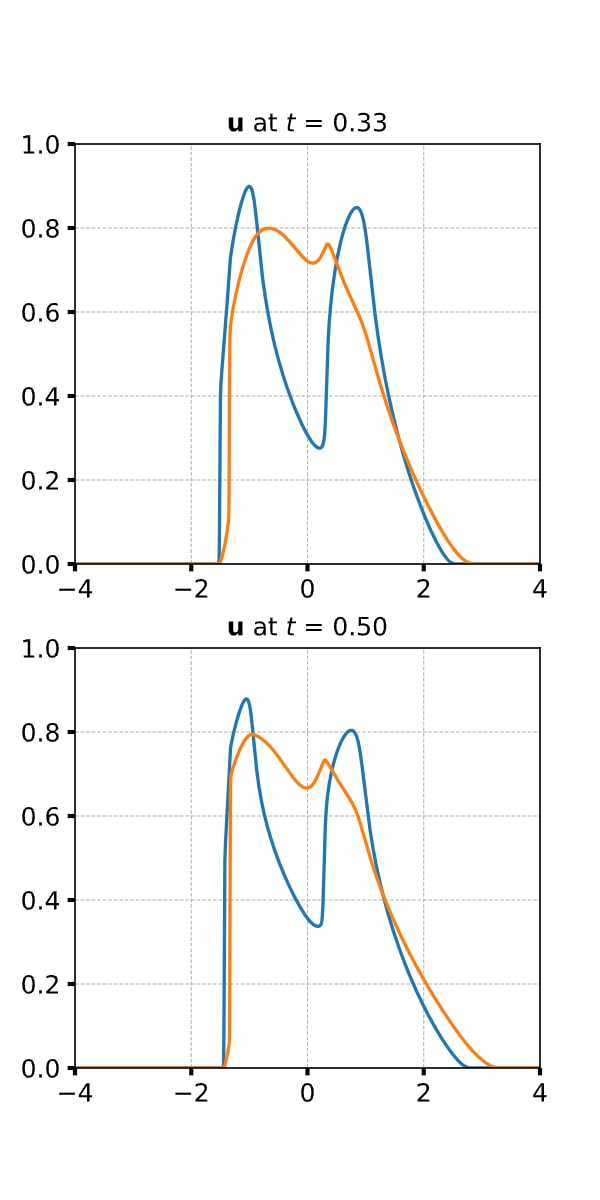}
\end{subfigure}
\hfill
\caption{Domain $[-4,\,4],\; t=0.33,\;0.5,\;\Delta x=0.00625$: (Left) Solution to the nonlocal conservation law~\eqref{PDE_NL_k}, \eqref{eq:ex1} with $\eta=100\Delta x$; (Right) Solution to the local balance law\cite[(2.2a), (2.2b)]{HR2019}, \eqref{eq:ex1}{\color{blue},} $u^1$({\color{blue}\full}),\,\,$u^2$({\color{orange}\full}).}
  \label{fig:ex2121}
\end{figure} 
{\color{black}It can be seen in Figure \ref{fig:ex2121} that due to the presence of the nonlocal terms with forward kernel, the blue slower lane observes a lower density of the faster lane and shifts to the faster lane at a relatively higher rate as compared to the local model. Further, in $[-1,1],$ the blue lane sees a higher average density and decides to shift {\color{black}at} a lower rate, thus increasing its density, while the faster red lane noticing a lower density shifts to slower lane. Similar behavior is observed for $x\ge 1.$} 

Figure \ref{fig:ex21} displays 
 \begin{figure}[ht!]
 \centering
\begin{subfigure}{.45\textwidth}
\includegraphics[width=\textwidth,keepaspectratio]{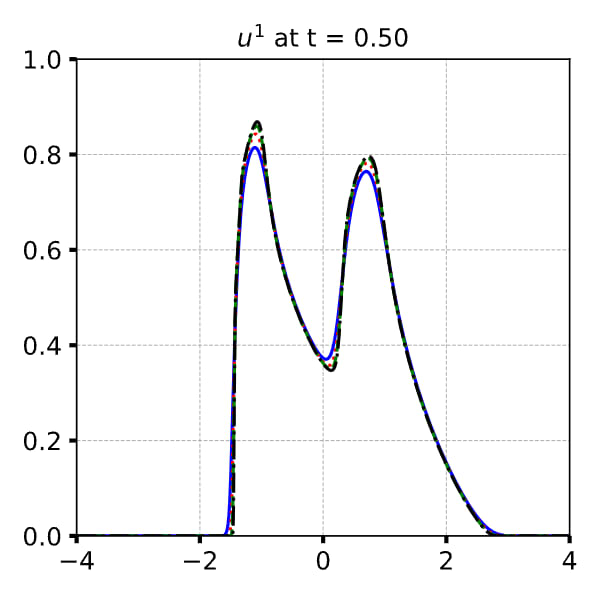}
\end{subfigure}
\begin{subfigure}{.45\textwidth}
\includegraphics[width=\textwidth,keepaspectratio]{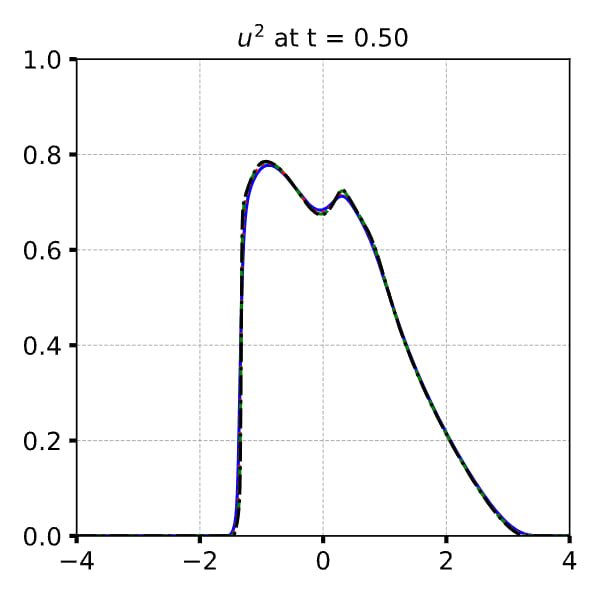}
\end{subfigure}
\caption{Solution to the nonlocal conservation law~\eqref{PDE_NL_k}, \eqref{eq:ex1} on the domain $[-4,\,4]$ at time $t = 0.5$, with decreasing mesh size  $\Delta x=0.00625$({\color{blue}\full}), $\Delta x=0.003125 $({\color{magenta}\dotted}), $\Delta x=0.0015625$({\color{black}\dashed}) and $\Delta x=0.00078125$({\color{darkgreen}\chainn}).}
  \label{fig:ex21}
\end{figure} \begin{figure}[ht!]  
  \centering
\noindent\begin{minipage}{0.45\textwidth}
    \centering
    \begin{tabular}{|c|c|c|c|c|c|c|c|c|c|}\hline
     \multicolumn{1}{|c|}{ $\displaystyle\frac{\D x}{0.00625}$}&\multicolumn{1}{|c|}{$\displaystyle\frac{e_{\Delta x}(T)}{100}$}\vline & \multicolumn{1}{|c|}{$\alpha$}\vline\\
     \hline
     $1$&$6.095$&$1.9386$\tabularnewline
     \hline
     $1/2$&$1.590$&$ 1.9664$\tabularnewline
     \hline
     $1/4$&$0.0407$&$1.9804$\tabularnewline
     \hline
     $1/8$&$0.0103$&$1.9862$\tabularnewline
     
     \hline
     $1/16$&$0.0026$&\tabularnewline
     
     \hline
 \end{tabular}
  \end{minipage}
  \noindent\begin{minipage}{0.45\textwidth}
\includegraphics[width=.9\textwidth, trim = 40 25 20 5]{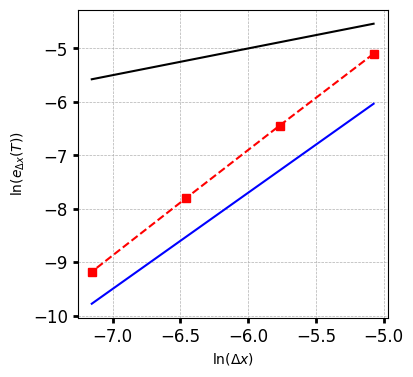}
  \end{minipage}
  \caption{Convergence rate $\alpha$ for the numerical scheme~\eqref{apx}--\eqref{MF}  for the approximate solutions
  to the problem~\eqref{PDE_NL_k}, \eqref{eq:ex1} on the domain $[-4,\,4]$ at time $T=0.5$. {\color{black}Optimal rate of convergence} 0.5({\color{black}\full}), Observed Convergence Rate ({\color{blue}\ch}), Reference Slope 1.9 ({\color{blue}\full}).}\label{fig:my_label211}\label{fig:table}
\end{figure}\begin{figure}[ht!]
 \centering
\begin{subfigure}{.45\textwidth}
\includegraphics[width=\textwidth,keepaspectratio]{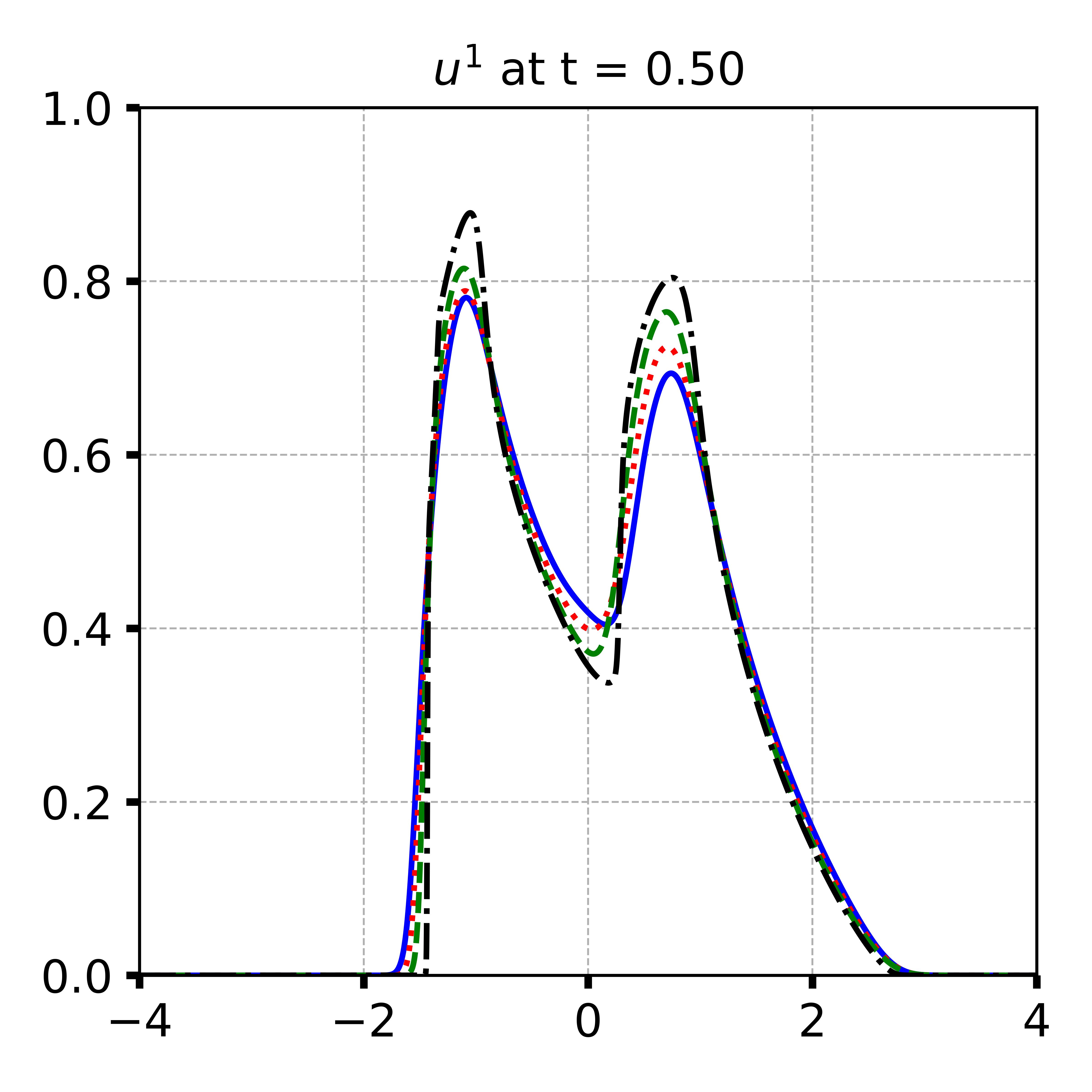}
\end{subfigure}
\hfill\begin{subfigure}{.45\textwidth}
\includegraphics[width=\textwidth,keepaspectratio]{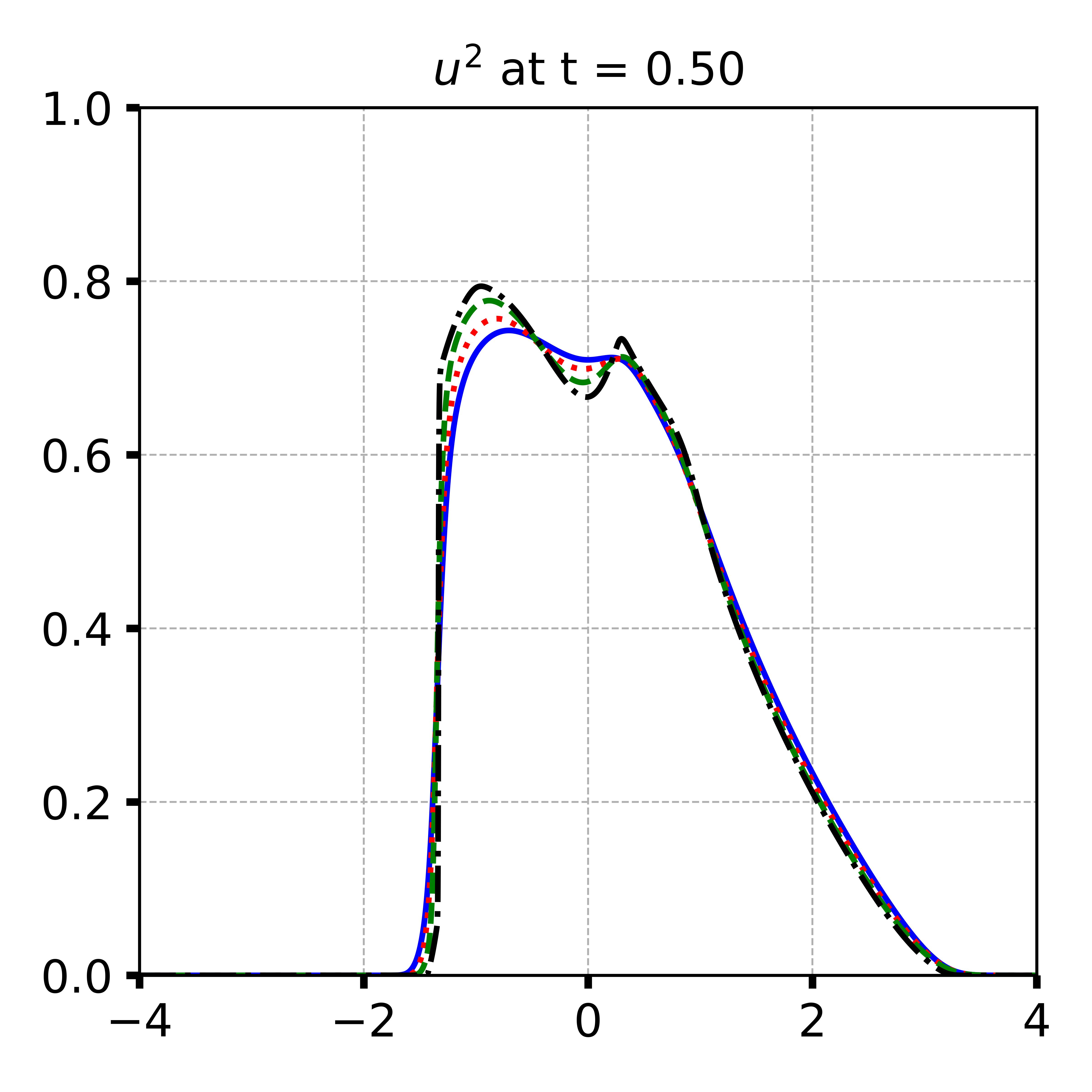}
\end{subfigure}
\caption{Domain $[-4,\,4],   T=0.5,\Delta x =0.00625$: Solution to the local conservation law~\eqref{PDE_NL_k}, \eqref{eq:ex1} with decreasing convolution radii   $100\Delta x$({\color{blue}\full}), $50\Delta x $({\color{magenta}\dotted}), $10\Delta x$({\color{green}\dashed}); Solution to the local balance law\cite[(2.2a), (2.2b)]{HR2019}, \eqref{eq:ex1} ({\color{black}\chainn}).}
  \label{fig:ex211}
\end{figure}the numerical approximations of \eqref{PDE_NL_k}, \eqref{eq:ex1}  by the numerical scheme \eqref{apx}--\eqref{MF}, with a decrease in grid size $\Delta x$, starting with $\Delta x =0.00625$. Next, we compute the observed
convergence rate $\alpha$ of the scheme at time $T=0.5$ by computing the $L^1$ distance
between the numerical solutions ${\boldsymbol{u}}_{\Delta x}(T,\dott)$ and $ {\boldsymbol{u}}_{\Delta x/2}(T,\dott)$ obtained
for the grid size $\Delta x$ and $\Delta x/2$, for each grid size $\Delta x$. Let $e_{\Delta x}(T)=\sum_{k=1}^{2}\norma{u^k_{\Delta x}(T,\dott)-u^k_{\Delta x/2}(T,\dott)}_{L^1(\R)}$. 
The observed convergence rate $\alpha$ is given by $\log_{2}(e_{\Delta x}(T)/{e_{\Delta x/2}(T)}).$ The results recorded in Figure \ref{fig:my_label211}  
show
 that $\alpha>0.5$. The present numerical integration resonates well with the theoretical convergence rate obtained in Theorem \ref{CR} in this article.

Figure \ref{fig:ex211} illustrates
the nonlocal to local limit, and suggests that the entropy solutions of the nonlocal conservation laws seem to converge to the entropy solution of the corresponding local conservation law as the radius of the kernel goes to zero, starting with $\eta=100\Delta x$, {\color{black} which could very well be due to the numerical viscosity present in the scheme, see \cite{CCMS2024, colombo2019singular}. 
\ack{A part of this work was carried out during the GV's tenure of the ERCIM ‘Alain Bensoussan’ Fellowship Programme at NTNU.  The project was supported in part by, the project \textit{IMod --- Partial differential equations, statistics and data:
An interdisciplinary approach to data-based modelling}, project number 65114, from the Research Council of Norway, by AA's faculty development allowance 2023-24, funded by IIM Indore, and  the Swedish Research Council under grant no. 2021-06594 while HH was in residence at Institut Mittag-Leffler in Djursholm, Sweden during the fall semester of 2023. {\color{black} GV would like to thank Elio Marconi and Laura Spinolo for useful discussions on their co-authored article on `nonlocal-to-local' dynamics \cite{CCMS2023}.}
\section*{References}
\def\cprime{$'$}
\providecommand{\newblock}{}

\end{document}